\newtheorem{theorem}{Theorem}[subsection]
\newtheorem{theoremstar}{Theorem}
\newtheorem{lemma}[theorem]{Lemma}
\newtheorem*{conjstar}{Conjecture}
\newtheorem{prop}[theorem]{Proposition}
\theoremstyle{definition}
\newtheorem{defn}[theorem]{Definition}
\newtheorem{example}[theorem]{Example}
\newtheorem{caution}[theorem]{Caution}
\newtheorem{remark}[theorem]{Remark}
\newtheorem{notation}[theorem]{Notation}
\newtheorem{formulary}[theorem]{Formulary}
\numberwithin{equation}{subsection}
\newcommand{\CC}{\mathbf{C}}
\newcommand{\QQ}{\mathbf{Q}}
\newcommand{\RR}{\mathbf{R}}
\newcommand{\ZZ}{\mathbf{Z}}
\newcommand{\bfA}{\mathbf{A}}
\newcommand{\bbD}{\mathbf{D}}
\newcommand{\bbL}{\mathbf{L}}
\newcommand{\calC}{\mathcal{C}}
\newcommand{\calL}{\mathcal{L}}
\newcommand{\calO}{\mathcal{O}}
\newcommand{\calR}{\mathcal{R}}
\newcommand{\calW}{\mathcal{W}}
\newcommand{\gothm}{\mathfrak{m}}
\newcommand{\gothp}{\mathfrak{p}}
\newcommand{\scrD}{\mathscr{D}}
\newcommand{\scrM}{\mathscr{M}}
\newcommand{\scrS}{\mathscr{S}}
\newcommand{\scrT}{\mathscr{T}}
\newcommand{\scrV}{\mathscr{V}}
\newcommand{\scrX}{\mathscr{X}}
\newcommand{\an}{\mathrm{an}}
\newcommand{\alg}{\mathrm{alg}}
\newcommand{\bs}{\backslash}
\newcommand{\cn}{\colon}
\newcommand{\crys}{\mathrm{crys}}
\newcommand{\dif}{\mathrm{dif}}
\newcommand{\dR}{\mathrm{dR}}
\newcommand{\Ga}{\Gamma}
\newcommand{\ga}{\gamma}
\newcommand{\id}{\mathrm{id}}
\newcommand{\Ind}{\mathrm{Ind}}
\newcommand{\om}{\omega}
\newcommand{\ord}{\mathrm{ord}}
\newcommand{\ov}[1]{{\overline{#1}}}
\newcommand{\pair}{\mathrm{pair}}
\newcommand{\perf}{\mathrm{perf}}
\newcommand{\pst}{\mathrm{pst}}
\def\Qp{\QQ_p}
\def\Qell{\QQ_\ell}
\newcommand{\rig}{\mathrm{rig}}
\newcommand{\unr}{\mathrm{unr}}
\newcommand{\spp}{\mathrm{sp}}
\newcommand{\st}{\mathrm{st}}
\newcommand{\wh}[1]{{\widehat{#1}}}
\newcommand{\wt}[1]{{\widetilde{#1}}}
\newcommand{\Zp}{\ZZ_p}
\newcommand{\Lotimes}{\mathop{\stackrel{\bbL}{\otimes}}}
\newcommand{\vep}{\varepsilon}
\newcommand{\vphi}{\varphi}
\DeclareMathOperator{\Aut}{Aut}
\DeclareMathOperator{\Cone}{Cone}
\DeclareMathOperator{\coker}{coker}
\DeclareMathOperator{\End}{End}
\DeclareMathOperator{\Fib}{Fib}
\DeclareMathOperator{\Fil}{Fil}
\DeclareMathOperator{\Frac}{Frac}
\DeclareMathOperator{\Gal}{Gal}
\DeclareMathOperator{\Gr}{Gr}
\DeclareMathOperator{\img}{img}
\DeclareMathOperator{\rank}{rank}
\DeclareMathOperator{\Max}{Max}
\DeclareMathOperator{\Spec}{Spec}
\begin{document}

\title{On the parity conjecture in finite-slope families}
\author{Jonathan Pottharst and Liang Xiao}

\maketitle
\begin{abstract}
We generalize to the finite-slope setting several techniques due to
Nekov\'a\v{r} concerning the parity conjecture for self-dual motives.
In particular we show that, for a $p$-adic analytic family, with
irreducible base, of symplectic self-dual global Galois
representations whose $(\vphi,\Ga)$-modules at places lying over $p$
satisfy a Panchishkin condition, the validity of the parity conjecture
is constant among all specializations that are pure.  As an
application, we extend some other results of Nekov\'a\v{r} for Hilbert
modular forms from the ordinary case to the finite-slope case.
\end{abstract}
\tableofcontents

\section{Introduction}

In a series of papers \cite{N2,N3,NDur,AN,NGr,NCan}, Nekov\'a\v{r}
introduced a number of useful techniques for treating the parity
conjecture for self-dual motives, especially for Hilbert modular
forms, at ordinary primes.  This conjecture, which is a piece of the
Bloch--Kato conjecture, compares the global $\vep$-factor with the
dimension modulo $2$ of the Selmer group.  The purpose of the present
paper is to show how recent advances in $p$-adic Hodge theory, namely
papers \cite{P,KPX} by the present authors and K.S.~Kedlaya, can be
used to extend these techniques to finite-slope cases.

In \cite{N3} in particular, it was shown that the validity of the
parity conjecture is constant along certain one-dimensional formal
families that are ordinary in the sense of Panchishkin.  Our main
result extends this conclusion to allow a general rigid analytic space
as the base of the family, and weakens the condition at $p$ to require
that the motive is only Panchishkin on the level of
$(\vphi,\Ga)$-modules.  See Notation~\ref{N:Panchishkin condition} for
the defintion of the latter condition; in the case of modular forms,
it is tantamount to being finite-slope (up to twist), in comparison to
ordinary (up to twist).  We state our main result more precisely as
follows (cf.\ Theorem~\ref{T:main}).  Fix a prime $p$, a number field
$F$, and a finite set of places $S$ of $F$ containing the sets $S_p$
and $S_\infty$ of those lying over $p$ and $\infty$ (respectively).
We write $G_{F,S}$ for the absolute Galois group unramified outside
$S$, and for any prime $v \in S \bs S_\infty$ we write $G_{F_v}$ for a
decomposition group at $v$ with inclusion $G_{F_v} \hookrightarrow
G_{F,S}$.

\begin{theoremstar}
\label{T:thm A}
Let $X$ be an irreducible reduced rigid analytic space over $\Qp$, and
$\scrT$ a locally free coherent $\calO_X$-module equipped with a
continuous, $\calO_X$-linear action of $G_{F,S}$, and a skew-symmetric
isomorphism $j \cn \scrT \stackrel\sim\to \scrT^*(1)$.  Assume given,
for each $v \in S_p$, a short exact sequence
\[
\scrS_v \cn 0 \to \scrD_v^+ \to \bbD_\rig(\scrT|_{G_{F_v}}) \to
\scrD_v^- \to 0
\]
of $(\vphi,\Ga)$-modules over $\calR_{\calO_X}(\pi_{F_v})$, with
$\scrD_v^+$ Lagrangian with respect to $j$.

For a (closed) point $P \in X$, we put $V = \scrT \otimes_{\calO_X}
\kappa(P)$ and $S_v = \scrS_v \otimes_{\calO_X} \kappa(P)$.  Let
$X_\alg$ be the set of points $P \in X$ such that (1) for all $v \in
S_p$, the sequence $S_v$ makes $V|_{G_{F_v}}$ Panchishkin (on the
level of $(\vphi,\Ga)$-modules), and (2) for all $v \in S \bs
S_\infty$ where $\scrT|_{G_{F_v}}$ is ramified, the associated
Weil--Deligne representaiton $WD(V|_{G_{F_v}})$ is pure.

Then the validity of the parity conjecture for $V$, relating the sign
of its global $\vep$-factor to the parity of the dimension of its
Bloch--Kato Selmer group, is independent of $P \in X_\alg$.
\end{theoremstar}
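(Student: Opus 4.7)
The plan is to show separately that over $X_\alg$ both the sign of the global $\vep$-factor and the parity of $\dim_{\kappa(P)} H^1_f(F,V)$ differ from the same explicitly locally constant quantity, so that the defect of the parity conjecture is locally constant on $X_\alg$, and hence constant by irreducibility of $X$.

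For the Selmer side, I would use the $(\vphi,\Ga)$-module framework of \cite{P,KPX} to construct a family Selmer complex $\wt{\bbR\Ga}_f(G_{F,S},\scrT)$ over $X$, whose local condition at $v\in S_p$ is the complex computing $\scrD_v^+$-cohomology and which takes unramified/full local conditions at other finite $v\in S$. By \cite{KPX} this is a perfect complex on $X$, and the Panchishkin hypothesis ensures that its specialization at any $P\in X_\alg$ recovers the Bloch--Kato Selmer complex of $V$. The isomorphism $j$ together with the Lagrangian property of each $\scrD_v^+$ yields, via Poitou--Tate duality in the $(\vphi,\Ga)$-setting from \cite{KPX}, a skew-symmetric autoduality of $\wt{\bbR\Ga}_f$ shifted by $3$. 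Following Nekov\'a\v{r}'s linear-algebra arguments in \cite{N3}, this autoduality pins down $\dim H^1_f(F,V) \pmod 2$ as the sum of the generic Euler characteristic of $\wt{\bbR\Ga}_f$ and explicit locally constant correction terms coming from local invariants of the $\scrS_v$.

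For the $\vep$-factor side, decompose $\vep(V) = \prod_v \vep_v(V)$. Archimedean and unramified finite factors are constant on $X_\alg$. At ramified $v\in S\bs(S_p\cup S_\infty)$, purity of $WD(V|_{G_{F_v}})$ on $X_\alg$ rigidifies the associated Weil--Deligne representation enough to force the sign of $\vep_v(V)$ to be locally constant. At $v\in S_p$, the Panchishkin condition together with the Lagrangian $\scrD_v^+$ identifies $\vep_v(V)$, up to a sign governed by the Hodge--Tate weights of $\scrD_v^+$, with a product of the local $\vep$-factors of $\scrD_v^+$ and of $\scrD_v^-$; these interpolate across $X_\alg$ via the de Rham theory of \cite{KPX}. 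Combining yields the required locally constant formula for the sign of $\vep(V)$, and matching it against the correction terms from the previous paragraph closes the loop.

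The main obstacle will be the finite-slope analogue of Nekov\'a\v{r}'s dictionary between $p$-adic local $\vep$-factors and Hodge-theoretic invariants of a Panchishkin filtration: in the ordinary case one has a genuine Galois-stable filtration on the representation itself, while here one only has $\scrD_v^+\subset \bbD_\rig(\scrT|_{G_{F_v}})$, and points $P\in X_\alg$ need not be de Rham uniformly across the family. Making this dictionary work in $X$-families, and matching the resulting local formula with the family Selmer complex's Euler characteristic, is the technical heart of the argument; it is precisely where the machinery of \cite{P,KPX} replaces Nekov\'a\v{r}'s ordinary framework.
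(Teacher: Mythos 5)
Your overall architecture --- comparing both the global sign and the Selmer parity to invariants of the family over the irreducible base, with a family Selmer complex carrying Nekov\'a\v{r}-style skew-symmetric duality on one side and purity forcing constancy of the prime-to-$p$ local $\vep$-factors on the other --- is indeed the paper's strategy. But there is a genuine gap in how you treat the correction terms at $p$. You assert that the discrepancy between the specialization of the family Selmer complex and the Bloch--Kato group is ``explicitly locally constant,'' and likewise that the sign at $v\in S_p$ is governed by Hodge--Tate weights together with $\vep$-factors of $\scrD_v^\pm$ that ``interpolate'' over $X_\alg$. Neither is true: the discrepancy is $\sum_{v\in S_p} h^0(D_v^-)=\sum_{v\in S_p}\dim \bbD_\crys(D_v^-)^{\vphi=1}$, a crystalline-Frobenius eigenvalue condition that jumps on $X_\alg$ (this jumping is precisely the difficulty of the finite-slope setting), and the family $\scrD_v^\pm$ is not de~Rham over $X$ in any uniform sense, so there is no interpolation of its $\vep$-factors to appeal to. The paper's mechanism is instead a cancellation: a pointwise symplectic Weil--Deligne computation under purity (Proposition~\ref{P:panchishkin epsilon}) gives $\vep_v(V)=(-1)^{h^0(D_v^-)}(-1)^{d^-(V_v)}\det(D_v^+)(-1)$ at each $v\in S_p$; the factor $(-1)^{h^0(D_v^-)}$ cancels exactly against the extended-versus-Bloch--Kato correction $\wt h^1_f(F,V)-h^1_f(F,V)=\sum_{v\in S_p}h^0(D_v^-)$ (Proposition~\ref{P:selmer complex}(2), via Lemma~\ref{L:panchishkin H1f}), the factor $(-1)^{d^-(V_v)}$ cancels against the archimedean $\vep$-factor \eqref{E:epsilon at infinity}, and what survives --- $\det(\scrD_v^+)(-1)$, $(-1)^{r_2(F)\dim_L V/2}$, and the prime-to-$p$ factors handled by purity --- visibly specializes from the family. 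Without identifying this cancellation, your two ``locally constant corrections'' do not match, and the defect of the parity conjecture is not shown to be constant on $X_\alg$.

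A second, smaller issue: constancy of the parity of $\wt h^1_f$ over a general irreducible rigid base does not follow from perfectness plus autoduality alone. The paper must first choose the local conditions at ramified $\ell\neq p$ places so that purity makes them acyclic near $P$ (Proposition~\ref{P:acyclic}) and base change applies, then pass to a resolution of singularities and a polydisc neighborhood, slice the base into a chain of discrete valuation rings, and only then run Nekov\'a\v{r}'s even-length-torsion argument (the skew-symmetric pairing on the torsion of $\wt H^2_f$) one DVR at a time, as in Proposition~\ref{P:vep_alg}. Your sketch cites the right source, but the reduction from a higher-dimensional rigid base to Nekov\'a\v{r}'s one-dimensional setting is a step that needs to be supplied, not assumed.
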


The reader should compare the above statement to \cite[(5.3.1)]{N3}.
We remark that the objects in the statement are well-defined even
without the knowledge of conjectures about motivic $L$-functions, such
as their analytic continuation and functional equation.

Using this result, some techniques of Nekov\'a\v{r} for Hilbert
modular forms of parallel weight two generalize readily from the
ordinary case to the finite-slope case.  The following result gives an
example (cf.\ Theorems~\ref{T:indefinite} and \ref{T:definite}).
(Using a \emph{different} technique, Nekov\'a\v{r} has also proven
this result with no condition on the slope; see \cite{NANT}.)

\begin{theoremstar}
\label{T:thm B}
Let $p \neq 2$ and $F$ a totally real number field. Let $f$ be a
Hilbert modular form for $F$ of parallel weight two with trivial
central character and finite-slope at $p$.  If $[F:\QQ]$ is even and
$f$ is principal series at all finite places, then additionally assume
the condition (A2) of \S\ref{S:Hilbert} and that $\vep(f)=+1$.  Then
the parity conjecture holds for $f$.
\end{theoremstar}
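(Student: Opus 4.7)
The plan is to apply Theorem~\ref{T:thm A}: embed $f$ in a $p$-adic analytic family of Galois representations over an irreducible rigid analytic base $X$ with all the structures required by Theorem~\ref{T:thm A}, and then connect $f$ via $X_\alg$ to a specialization for which the parity conjecture has already been proven by Nekov\'a\v{r}.

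To build the family, I would take $X$ to be a reduced irreducible component through $f$ in a Coleman--Buzzard eigenvariety of Hilbert modular forms at the appropriate tame level, normalized so that the central character is trivial along $X$; this provides the point $P_0$ corresponding to $f$, and also provides the skew-symmetric self-duality $j \cn \scrT \stackrel\sim\to \scrT^*(1)$ on the universal Galois representation $\scrT$. For each $v \mid p$, the trianguline structure along the eigenvariety established in \cite{KPX} yields a Lagrangian $(\vphi,\Ga)$-submodule $\scrD_v^+ \subset \bbD_\rig(\scrT|_{G_{F_v}})$ that specializes, at every classical finite-slope point of $X$, to the Panchishkin filtration of the corresponding Hilbert modular Galois representation. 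Classical points are Zariski-dense in $X$, and purity of the local Weil--Deligne representations at ramified $v \nmid p$ is known at these specializations by the work of Blasius--Rogawski and Saito; hence $X_\alg$ contains a Zariski-dense set of classical points including $P_0$, and Theorem~\ref{T:thm A} asserts that the parity conjecture holds at $P_0$ if and only if it holds at any other point $Q \in X_\alg$.

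It remains to produce such a $Q$ for which parity is already known. In the indefinite case of Theorem~\ref{T:indefinite} -- $[F:\QQ]$ odd, or $f$ Steinberg at some finite place -- I would deform $f$ along $X$ to a classical ordinary parallel-weight-two Hilbert modular form $g$ with trivial central character, using density of ordinary points in $X$; parity for such $g$ is the content of \cite{N3}, proved there via Heegner points on indefinite quaternionic Shimura varieties. In the definite case of Theorem~\ref{T:definite}, the hypothesis $\vep(f) = +1$ combined with condition (A2) of \S\ref{S:Hilbert} ensures that one can exhibit on $X$ a base-change or CM form $g$ for which Nekov\'a\v{r} has previously established parity, using his techniques for definite quaternion algebras from \cite{AN,NGr,NCan}. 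In both situations Theorem~\ref{T:thm A} transports parity from $g$ back to $f$.

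The main obstacle is guaranteeing that the desired comparison form $g$ lies on the specific irreducible component $X$ through $f$, rather than on some other component of the ambient eigenvariety. In the indefinite case this reduces to a density argument for ordinary classical points on a component of the Hilbert eigenvariety, with care that trivial central character is preserved along the family. In the definite case this is precisely what is codified by assumption (A2) of \S\ref{S:Hilbert}: without it one would lack a bridge from $f$ to a known point, so (A2) together with $\vep(f) = +1$ is exactly what makes the strategy go through.
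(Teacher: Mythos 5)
There is a genuine gap, and it sits at the heart of your strategy: everything depends on finding, \emph{on the same irreducible component $X$ of the eigenvariety passing through $f$}, a point where the parity conjecture is already known (an ordinary classical parallel weight two point with trivial central character in your ``indefinite'' step, a CM or base-change point in your ``definite'' step). No such result is available. Classical points are Zariski-dense on $X$, but ordinary points are not known to exist on an arbitrary component, let alone ordinary points of parallel weight two with trivial central character; this is exactly the sort of global geometry of eigenvarieties that is only conjectural (cf.\ the folklore conjecture in the introduction, and the fact that Theorem~\ref{T:thm C} is stated \emph{conditionally} on $X$ containing a classical parallel weight two point). Theorem~\ref{T:thm B} is precisely the base case that cannot be obtained by deforming within the eigenvariety, and your definite-case step is vaguer still: condition (A2) is \emph{not} a ``bridge'' guaranteeing a known comparison point on $X$; it is a big-image hypothesis on the residual/Galois representation required by Nekov\'a\v{r}'s Euler-system theorems for definite quaternion algebras (\cite[Theorems~A and B]{NCan}), and $\vep(f)=+1$ is needed because in the definite case the Euler system only bounds Selmer groups in the rank-zero (nonvanishing $L$-value) situation.

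The paper's actual route uses Theorem~\ref{T:main} with a completely different family: not the eigenvariety, but the family of twists of $V_f|_{G_{K,ND}}$ by continuous characters of the anticyclotomic $\Zp^{[F_P:\Qp]}$-extension of a CM quadratic extension $K/F$ (the base is the character variety of $G_\infty$, and the Panchishkin data come from the finite-slope hypothesis on $f$ itself, twisted). This yields Theorem~\ref{T:parity-twisting}: parity for $f/K$ is equivalent to parity for $(f,\chi)$ for \emph{any} finite-order anticyclotomic $\chi$. The point is that the available Euler-system inputs only prove parity for twists $(f,\chi)$ with $\chi$ of large $P$-power conductor --- via nontriviality of CM points on Shimura curves \cite{AN} plus the Euler system of \cite{NDur} in the indefinite case, and via Cornut--Vatsal nonvanishing \cite{CV} plus \cite[Theorem~A]{NCan} (where (A2) enters) in the definite case --- and Theorem~\ref{T:main} is used to strip off the auxiliary $\chi$. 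One then descends from $f/K$ to $f$ using Friedberg--Hoffstein quadratic-twist nonvanishing \cite{FH}. So the family argument moves between twists of a single form, not between different forms on an eigenvariety; if you want to salvage your approach you would need an unconditional supply of ordinary (or otherwise ``known'') points on every component through a weight two finite-slope form, which is currently out of reach.
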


The application to forms of higher weight is genuinely new.  Namely,
the following result follows immediately from combining the two
theorems above, using the triangulation results of \cite{KPX} to
obtain the necessary families $\scrS_v$ of Panchishkin exact sequences
over the relevant eigenvarieties.

\begin{theoremstar}
\label{T:thm C}
Let $p \neq 2$, $F$ a totally real number field with $[F:\QQ]$ odd,
and $X$ an irreducible component of the eigenvariety of finite-slope
overcovergent $p$-adic Hilbert modular cusp forms for $F$.  If $X$
contains a classical, parallel weight two point, then the parity
conjecture holds for all classical points of $X$.
\end{theoremstar}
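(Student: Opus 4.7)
The plan is to apply Theorem~\ref{T:thm A} to the universal family of Galois representations over $X$, using Theorem~\ref{T:thm B} to handle the parallel weight two classical point as the base case. The first task is to produce the data $(\scrT, j, \scrS_v)$ required by Theorem~\ref{T:thm A}. Over (the reduction of) the irreducible component $X$, possibly after a finite admissible cover to globalize a universal locally free module and a twist to absorb the central character into a cyclotomic twist, the standard constructions on the Hilbert eigenvariety yield a locally free coherent $\calO_X$-module $\scrT$ carrying a continuous $G_{F,S}$-action (where $S$ contains $p$, $\infty$, and the finite ramification of the family), together with the symplectic self-duality $j \cn \scrT \stackrel\sim\to \scrT^*(1)$ arising from the trivial central character.

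For each $v \in S_p$, the triangulation theorem of \cite{KPX}, applied to $\scrT|_{G_{F_v}}$, produces the short exact sequence $\scrS_v$ of $(\vphi,\Ga)$-modules over $\calR_{\calO_X}(\pi_{F_v})$ with $\scrD_v^+$ of rank $[F_v:\Qp]$, i.e.\ half the rank of $\bbD_\rig(\scrT|_{G_{F_v}})$. The Lagrangian property of $\scrD_v^+$ with respect to $j$ then follows from this rank count combined with a check of isotropy at a single specialization---transparent at the parallel weight two classical point, where $\scrD_v^+$ recovers the isotropic Hodge filtration step at $v$.

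Next I would verify that every classical point $P \in X$ lies in $X_\alg$. For any finite place $v$ of ramification of $\scrT$, purity of $WD(V|_{G_{F_v}})$ is part of local-global compatibility for Hilbert modular forms, established unconditionally by Blasius, T.~Saito, Skinner, and others. At $v \in S_p$, the specialization of $\scrS_v$ at $P$ gives a sub-$(\vphi,\Ga)$-module whose Hodge--Tate--Sen weights form the ``upper half'' of those of $\bbD_\rig(V|_{G_{F_v}})$---this is the content of the triangulation result of \cite{KPX} at all classical points, including critical-slope ones, and is precisely the Panchishkin condition of Notation~\ref{N:Panchishkin condition}.

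The parallel weight two classical point $P_0 \in X$ therefore lies in $X_\alg$; since $[F:\QQ]$ is odd, Theorem~\ref{T:thm B} applies unconditionally to $f_{P_0}$ (bypassing hypothesis (A2) and the $\vep$-hypothesis), yielding parity at $P_0$. Theorem~\ref{T:thm A} then propagates validity from $P_0$ to all of $X_\alg$, in particular to every classical point of $X$. The main obstacle I expect is the globalization of the data in the first two paragraphs: constructing $\scrT$ and $j$ over the whole component (not merely generically), and, more seriously, producing the Panchishkin filtration $\scrS_v$ with $\scrD_v^+$ globally Lagrangian of the correct rank---this is where the irreducibility of $X$, together with the existence of a single well-chosen classical point where the rank count and isotropy can be verified, is essential.
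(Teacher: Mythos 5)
Your proposal follows essentially the same route as the paper, which deduces Theorem~\ref{T:thm C} immediately by combining Theorem~\ref{T:thm A} and Theorem~\ref{T:thm B}, with the triangulation results of \cite{KPX} supplying the Panchishkin families $\scrS_v$ over the relevant eigenvariety; the extra details you supply (symplectic self-duality of the family, purity at ramified places via local-global compatibility, and the fact that Theorem~\ref{T:thm B} is unconditional when $[F:\QQ]$ is odd) are exactly what the paper leaves implicit. The one place you assert slightly more than the paper makes explicit is that the specialized triangulation is Panchishkin at \emph{critical-slope} classical points as well---the global triangulation of \cite{KPX} exists as a locally free filtration only after modification, and its specialization at critical points requires care---but the paper's own one-sentence derivation relies on the same input, so your approach matches it.
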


At least when $[F:\QQ]$ is odd, the preceding result should apply to
every irreducible component $X$ of the eigenvariety, according to the
following folklore conjecture on the \emph{global} geometry of
eigenvarieties, which would generalize results of Buzzard--Kilford
\cite{BK} and Roe \cite{R} when $F=\QQ$ and $p=2$ and $3$,
respectively, and the recent preprint of Liu-Wan-Xiao \cite{LWX} for definite quaternion algebras over $\QQ$ with $p \geq 3$.

\begin{conjstar}
Let $F$ be a totally real number field, and $X$ an irreducible
component of the eigenvariety of finte-slope overconvergent $p$-adic
Hilbert modular cusp forms for $F$.  For every $\epsilon>0$ there
exists an affinoid subdomain $U_\epsilon$ of weight space $W$ such
that the restriction $X \times_W (W \bs U_\epsilon)$ has an
irreducible component $C$ with all slopes strictly less than
$\epsilon$.
\end{conjstar}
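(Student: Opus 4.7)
The plan is to deduce this conjecture from a ``spectral halo''-type analysis of the Newton polygon of the $U_p$-characteristic power series over weight space, in the style of Buzzard--Kilford \cite{BK}, Roe \cite{R}, and Liu--Wan--Xiao \cite{LWX}. Write $P(\kappa, T) \in \calO(W)\{\{T\}\}$ for the characteristic power series of the compact $U_p$-operator on the universal module of finite-slope overconvergent Hilbert modular cusp forms for $F$, and let $\scrX \subset W \times \bbG_m^\rig$ be its vanishing locus. The eigenvariety admits a finite map to $\scrX$, and slope is the $p$-adic valuation of the $T$-coordinate; hence it suffices to produce, on every sufficiently deep boundary annulus of $W$, an irreducible component of $X \times_W (W \bs U_\epsilon)$ whose slopes are uniformly $<\epsilon$.

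The central step is a global Newton-polygon estimate: for $\kappa$ near the boundary of $W$ (measured by the valuation of a boundary coordinate $w$), the slopes of $P(\kappa,-)$ should take the form $\alpha_n \cdot v_p(w)$ for an explicit positive sequence $\alpha_n \to 0$. The template for a proof in the Hilbert setting is that of Liu--Wan--Xiao: transfer via Jacquet--Langlands to a totally definite quaternion algebra over $F$, construct a Katz-style orthonormal basis for the space of overconvergent forms, and estimate the Newton polygon of $U_p$ entrywise on this basis. The principal technical obstacle is that $F$ may have several places above $p$: weight space becomes multi-variable, $U_p$ decomposes as a tensor product of place-wise operators, and controlling the Newton polygon of the resulting Hecke algebra uniformly near a higher-dimensional boundary is substantially more intricate than in \cite{LWX}.

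Granted such an estimate, the conjecture follows by slope decomposition. For fixed $\epsilon>0$, choose $U_\epsilon \subset W$ so large that on $W\bs U_\epsilon$ the first several slopes of $P(\kappa,-)$ all lie in $[0,\epsilon)$ with a uniform gap to the remaining slopes. By the Riesz theory of compact operators in affinoid families, $P$ then factors as $P = P_{<\epsilon} \cdot P_{\geq\epsilon}$ over $W\bs U_\epsilon$, splitting $\scrX \times_W (W\bs U_\epsilon)$ as a disjoint union of admissible opens $\scrX_{<\epsilon} \sqcup \scrX_{\geq\epsilon}$, with $\scrX_{<\epsilon}$ finite flat over the base and carrying all slopes $<\epsilon$. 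Pulling back, $X \times_W (W\bs U_\epsilon)$ inherits such a disjoint decomposition, so every irreducible component of it is either entirely low-slope or entirely high-slope. It remains to argue that the low-slope part is nonempty; this amounts to the global-geometric statement that the image of $X$ in $\scrX$ meets the halo piece $\scrX_{<\epsilon}$, and is a second substantive obstacle. In the special cases of \cite{BK,R,LWX} it is essentially automatic because $\scrX$ itself is irreducible near the boundary, but in general it must be argued using equidimensionality of $X$ over $W$ together with a suitable classicality or Zariski-density input that forces $X$ to extend to and interact with the low-slope halo components.
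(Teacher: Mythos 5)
This statement is not proven in the paper at all: it is stated as a folklore \emph{conjecture} on the global geometry of eigenvarieties, offered as a generalization of the known cases of Buzzard--Kilford ($F=\QQ$, $p=2$), Roe ($F=\QQ$, $p=3$), and Liu--Wan--Xiao (definite quaternion algebras over $\QQ$, $p\geq 3$), so there is no proof in the paper to compare yours against. What you have written is a strategy outline in the expected ``spectral halo'' style, and you are candid that it is not complete; but for the record, the two places where your argument genuinely stops short are exactly the places where the conjecture is hard. First, the Newton-polygon estimate near the boundary of weight space is only known in the cited special cases; when $F$ has several primes above $p$ the weight space is multi-dimensional, $U_p$ is a product of local operators whose slopes interact, and no uniform lower-bound/upper-bound estimate of the required shape has been established --- asserting that the slopes ``should take the form $\alpha_n\cdot v_p(w)$'' is restating the expected halo phenomenon, not proving it. Second, even granting a halo decomposition of the Fredholm hypersurface, the conjecture is a statement about an \emph{arbitrary} irreducible component $X$ of the eigenvariety: you must show that $X$ itself, restricted to the boundary region, has an irreducible component lying entirely in the low-slope piece. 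Your appeal to ``equidimensionality plus a suitable classicality or Zariski-density input'' names the issue without resolving it; a priori a given component $X$ could stay away from the low-slope halo (this nonemptiness is precisely the global-geometric content the paper is conjecturing, since it is what forces every component to contain classical parallel weight two points). So the proposal is a reasonable roadmap consistent with how the known cases were proved, but both of its pillars are assumed rather than established, and the statement remains open.
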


In particular, for $\epsilon$ sufficiently small depending on $F$, any
parallel weight two point of slope strictly less than $\epsilon$ is
classical, at least when $X$ is the image under the Jacquet--Langlands
correspondence of an irreducible component of the eigenvariety for a
definite quaternion algebra in \cite{Bu}.  Thus the subspace $C$ of
the conjecture forces $X$ to have classical, parallel weight two
points.

Here is a roadmap of this paper.  In \S\ref{S:review} we review the
$p$-adic Hodge theory and Galois cohomology of $(\vphi,\Ga)$-modules.
All of the results therein are known to the experts, but appear in no
single place in the literature.  In \S\ref{S:parity} we prove
Theorem~\ref{T:thm A}.  While our strategy roughly follows that of
Nekov\'a\v{r}, our generality forces us to replace certain ingredients
of his proof with more technical arguments.  In \S\ref{S:Hilbert} we
explain the applications to Hilbert modular forms, and in particular
prove Theorem~\ref{T:thm B}.

\subsection*{Acknowledgements}

Part of this work was undertaken while J.P. was supported
by an NSF Postdoctoral Research Fellowship.  
L.X. is  partially supported by a grant from the Simons Foundation \#278433 and CORCL research grant from University of California, Irvine.
The authors would like to
thank Ben Howard and Jan Nekov\'a\v{r} for helpful conversations.

\section{Review of $(\vphi,\Ga)$-modules}\label{S:review}

In this section, we recall the facts we will need about
$(\vphi,\Ga)$-modules over the Robba ring.  The first subsection
treats definitions of the objects themselves, the classification of
rank one objects, and the relationship to Galois representations.  In
the next subsection we collect in one place the fundamental invariants
coming from Galois cohomology and $p$-adic Hodge theory, and the
interrelations among them.  The Panchishkin condition is introduced in
the final subsection.

\subsection{Definitions and generalities}

Let $p$ be a prime number, and let $K/\Qp$ be a finite extension with
ring of integers $\calO_K$ and residue field $k$ of cardinality
$q=p^h$.  Fix an algebraic closure $K^\alg$ of $K$ and set $G =
\Gal(K^\alg/K)$ with inertia subgroup $I \subset G$.  Write $K_\infty
= K(\mu_{p^\infty})$, $H = \Gal(K^\alg/K_\infty)$, and $\Ga =
\Gal(K_\infty/K)$; the $p$-adic cyclotomic character $\chi$ on $G$ has
kernel $H$ and maps $G/H = \Ga$ isomorphically onto an open subgroup
of $\Zp^\times$.  (If the dependence on $K$ is relevant, we use a
subscript, e.g. $\Ga_K$.)  Let $k'$ be the residue field of
$K_\infty$, which is finite, and set $K_0 = W(k)[1/p]$ and $K_0' =
W(k')[1/p]$.  Write $e'_K = [K_\infty:\QQ_{p,\infty}]/[k':k]$ for the
ramification degree of $K_\infty/\QQ_{p,\infty}$.  Let
$K_0^\mathrm{ur} = \Qp^\mathrm{ur}$ be the maximal unramified
extension of $K_0$.

Let $\pi_K$ be an indeterminate, and for $r > 0$ put
\[
\calR^r
= \calR^r(\pi_K)
= \{f(\pi_K)=\sum_{n \in \ZZ} a_n\pi_K^n \mid
    a_n \in K_0',
    \text{ convergent for } p^{-r/(p-1)} \leq |\pi_K| < 1\}.
\]
Let $\calR = \calR(\pi_K) = \bigcup_{r \to 0^+} \calR^r(\pi_K)$ be the
\emph{Robba ring}.  The theory of the field of norms takes as input a bit of
choice involving the meaning of $\pi_K$, and outputs the following
variety of structures.  One has a constant $C(\pi_K)>0$ and, for
rational numbers $r \in (0,C(\pi_K))$ (which we henceforth tacitly
assume when we refer to $r$), a finite flat degree $p$ algebra map
$\vphi \cn \calR^r \to \calR^{r/p}$ and a continuous ring-theoretic
action of $\Ga$ on $\calR^r$ that commute with one another, compatible
with the inclusions for varying $r$.  The actions of $\vphi$ and $\Ga$
on the coefficents $K_0'$ are the arithmetic Frobenius action and the natural one, respectively.  For a finite
extension $K'/K$ inside $K^\alg$ and $r <
\min\{C(\pi_K),C(\pi_{K'})e'_{K'}/e'_K\}$, one has canonical
inclusions $\calR^r(\pi_K) \hookrightarrow
\calR^{re'_K/e'_{K'}}(\pi_{K'})$ that are finite \'etale of degree
$[K'_\infty:K_\infty]$ and compatible with the actions of $\vphi$ and
$\Ga_{K'} \subseteq \Ga_K$.

In the case $K=\Qp$, each choice of a basis of $\Zp(1)$ gives rise to
a distinguished choice for the meaning of $\pi_{\Qp}$, the result of
which we call $\pi$.  It satisfies $\vphi(\pi) = (1+\pi)^p-1$ and
$\ga(\pi) = (1+\pi)^{\chi(\ga)}-1$.  Write $t = \log(1+\pi) \in
\calR^r(\pi) \subseteq \calR^{r/e'_K}(\pi_K)$ for Fontaine's element.
One has a family of $\Ga$-equivariant homomorphisms $\iota_n =
\iota_n^{r} \cn \calR^r \hookrightarrow K_\infty[\![t]\!]$, indexed by positive integers
$n$ for which $1/p^{n-1} \leq re'_K$, such that
$\iota_{n+1}^{r/p} \circ \varphi = \iota_n^{r}$.

Fix another finite extension $L/\Qp$ as the coefficient field.  For $?=\emptyset,r$, write
$\calR_L^? = \calR_L^?(\pi_K)$ for the base extension $\calR^?(\pi_K)
\otimes_{\Qp} L$, and extend the actions of $\vphi$ and $\Ga$ to
$\calR_L$ by $L$-linearity.  Note that $\calR_L$ is a product of
B\'ezout domains, so a finite projective $\calR_L$-module of constant
rank is free.

A \emph{$\vphi$-module} is by definition a finite free
$\calR_L$-module $D$ equipped with a semilinear action of $\vphi$ such
that $\vphi(D) \otimes_{\vphi(\calR_L)} \calR_L \stackrel\sim\to D$.
We write $d = d(D) = \rank_{\calR_L} D$ (not to be confused with the
integer $d_L(D)$ appearing in Section~\ref{S:parity} below).  Note
that there always exists $r$ such that $D$ admits a model $D^r$ over
$\calR_L^r$, i.e. a finite free $\calR_L^r$-submodule $D^r \subset D$
such that, writing $D^s = D^r \otimes_{\calR^r} \calR^s \subseteq D$
for $s \in (0,r)$ or $s=\emptyset$, then the inclusion of $\vphi(D^r)$
into $D$ induces an isomorphism $\varphi(D^r)
\otimes_{\varphi(\calR_L^r)} \calR_L^{r/p} \cong D^{r/p}$.  For $s$
sufficiently small, such $D^s$ is uniquely determined.  A
\emph{$(\vphi,\Ga)$-module} is by definition a $\vphi$-module $D$
equipped with a continuous action of $\Ga$ commuting with $\vphi$.
Since $D^s$ is uniquely determined for small $s$, it is stable by
$\Ga$.  If $K'/K$ is a finite extension, we write $D|_{K'} = D
\otimes_{\calR_L(\pi_K)} \calR_L(\pi_{K'})$ for the resulting
$(\vphi,\Ga_{K'})$-module over $\calR_L(\pi_{K'})$.  Conversely, if
$D'$ is a $(\varphi,\Ga_{K'})$-module over $\calR_L(\pi_{K'})$, then
we define $\Ind_{K'}^K D' = \Ind_{\Ga_{K'}}^{\Ga_K} D'$, considered as
a $(\vphi,\Ga_K)$-module over $\calR_L(\pi_K)$.

To each continuous character $\delta \cn K^\times \to L^\times$, there is
associated a rank one $(\varphi, \Gamma)$-module $\calR_L(\delta)$.  When $K=\Qp$, one has
$\calR_L(\delta) = \calR_L e_\delta$, where $\vphi(e_\delta) =
\delta(p)e_\delta$ and $\ga(e_\delta) = \delta(\chi(\ga))e_\delta$ for
$\ga \in \Ga$; for general $K$ we refer to \cite[Construction~6.2.4]{KPX} for the
construction.  Every rank one object $D$ isomorphic to
$\calR_L(\delta_D)$ for a uniquely determined character $\delta_D$.
(In this generality, the fact was originally shown in \cite[Theorem~1.45]{Nak2}
using another language; see \cite[Lemma~6.2.13]{KPX} for a proof in this language.)
For an object $D$ of any rank, we write $\det(D)$ for the character of
$K^\times$ corresponding to the top exterior power of $D$.

There is a slope theory for $\vphi$-modules \cite{Ked}.  Namely, any
$\vphi$-module $D$ is uniquely filtered by a finite list of saturated
subobjects such that each graded piece is pure of some $\vphi$-slope
$\lambda \in \QQ$, and the $\lambda$ are increasing.  Although the
definition of pureness is somewhat technical, morally it means that
$\vphi$ has eigenvalues over a sufficiently large over-ring of
$\calR_L$ of valuation $\lambda$.  We call $D$ \emph{\'etale} if is pure of
slope $0$.  When $D$ is a $(\vphi,\Ga)$-module, the slope filtration
of its underlying $\vphi$-module is, by uniqueness, stable under
$\Ga$; we say $D$ is \emph{\'etale} if its underlying $\vphi$-module is.
Combining work of Fontaine \cite{Fon}, Cherbonnier--Colmez \cite{CC},
and Kedlaya \cite{Ked}, one obtains a fully faithful, exact
$\otimes$-functor $\bbD_\rig$ from the category of continuous
representations of $G$ over $L$ to the category of
$(\vphi,\Ga)$-modules over $\calR_L$, having for essential image the
\'etale objects, and compatible with Galois cohomology and $p$-adic
Hodge theoretic invariants to be introduced in the next subsection.
In the case of rank one, the functor is simple to describe: a rank one
$p$-adic Galois representation is just a character $\delta \cn G \to
L^\times$, and $\bbD_\rig(\delta) = \calR_L(\delta \circ
\mathrm{Art}_K)$, where $\mathrm{Art}_K \cn K^\times \to
G^\mathrm{ab}$ is the local Artin map (normalized to take a
uniformizer onto a lift of geometric Frobenius).

We use a superscript $^*$ to denote internal Hom into the unit
object.  For any $(\vphi,\Ga)$-module $D$, we write $D(n) = D
\otimes_\calR \bbD_\rig(\Qp(1))^{\otimes n}$ for $n \in \ZZ$.

\subsection{Numerology of de~Rham $(\vphi,\Ga)$-modules}\label{S:numerology}

Writing $\Delta \subseteq \Ga$ for the $p$-torsion subgroup (which is
trivial if $p \neq 2$) and fixing a choice of $\ga \in \Ga$ mapping to
a topological generator in $\Ga/\Delta$, for any object $X$ with
commuting actions of $\vphi$ and $\Ga$ (resp. an action of $\Ga$), one
defines $H^*_{\vphi,\ga}(X)$ (resp. $H^*_\ga(X)$) to be the cohomology
of the complex concentrated in degrees $[0,2]$ given by
\[
C^\bullet_{\vphi,\ga}(X) =
[X^\Delta \xrightarrow{\vphi-1,\ga-1} X^\Delta \oplus X^\Delta
 \xrightarrow{\ga-1,1-\vphi} X^\Delta]
\]
(resp. in degrees $[0,1]$ given by $C^\bullet_\ga(X) = [X^\Delta
  \xrightarrow{\ga-1} X^\Delta]$).  One has a canonical identification
$H^0_\ga(X) = X^\Ga$, and an identification $H^1_\ga(X) \approx X_\Ga$
determined up to a constant $\Zp^\times$-multiple (which we fix).
When $X=D$ or $D[1/t]$, where $D$ is a $(\vphi,\Ga)$-module, one calls
$H^*_{\vphi,\ga}(X)$ its \emph{Galois cohomology} and sometimes omits
subscripts for brevity, as in $H^*(X)$.  If $?$ is any subscript, we
write $h^i_?(X) = \dim_L H^i_?(X)$.  Tate's duality and
Euler--Poincar\'e formulas, proven in this context by R.~Liu \cite{L},
show that the $h^*(D)$ are finite, $H^i(D)^* \cong H^{2-i}(D^*(1))$,
and $\sum_i (-1)^i h^i(D) = -[K:\Qp]d$, where $d = \rank_{\calR_L} D$ as before.

Because $H^0(D[1/t])$ is the direct limit of the $H^0(t^{-r}D)$, with
injective transition maps and each term of $L$-dimension at most
$\rank_{\calR_L} D$, we have that $H^0(D[1/t]) = H^0(t^{-r}D)$ for $r
\gg 0$.  Similarly, it is easy to see that $H^0(t^rD^*(1)) = 0$ for
all $r \gg 0$, from which Tate duality gives $H^2(t^{-r}D) = 0$, and
passing to the direct limit gives $H^2(D[1/t]) = 0$.  Furthermore, one
can show that $h^i(t^mD/t^{m+1}D)$ is only nonzero for finitely many
pairs $(i,m)$, from which it follows that the maps $H^1(t^{-r}D) \to
H^1(t^{-(r+1)}D)$ are isomorphisms for $r \gg 0$.  We conclude that
for all $r \gg 0$ and $i=0,1,2$ we have $H^i(D[1/t]) = H^i(t^{-r}D)$,
and therefore $\sum_i (-1)^ih^i(D[1/t]) = \sum_i (-1)^ih^i(t^{-r}D) =
-[K:\Qp]\rank_{\calR_L} D = -[K:\Qp]d$.

If $D$ is a $(\vphi,\Ga)$-module, then one associates to it $p$-adic
Hodge theoretic invariants by the formulas
\[
\bbD_\crys(D) = D[1/t]^\Ga,
\qquad
\bbD_\st(D) = D[1/t,\log\pi]^\Ga,
\qquad
\bbD_\pst(D) = \varinjlim_{K'/K} \bbD_\st(D|_{K'}).
\]
Using the $\vphi$-structure, one can show that the $\Ga$-modules
$\bbD_\dif^+(D) = D^s \otimes_{\calR^s,\iota_n} K_\infty[\![t]\!]$ are
independent of sufficiently small $s$ and large $n$, and one puts
\[
\bbD_\dR^+(D) = \bbD_\dif^+(D)^\Ga,
\qquad
\bbD_\dR(D) = \bbD_\dif^+(D)[1/t]^\Ga.
\]
All these modules are equipped with the usual $\vphi,N,G$, and
filtration structures, and are related in the usual ways as in
Fontaine's theory, the only difference being that the weak
admissibility condition does not appear; see \cite[\S3.1]{N3} for more
information.  Note that, by definition, $H^0(D) = \Fil^0
\bbD_\crys(D)^{\vphi=1}$ and $H^0(D[1/t]) =
\bbD_\crys(D)^{\varphi=1}$.

We define $D$ to be \emph{crystalline} (resp.\ \emph{semistable},
\emph{potentially semistable}, \emph{de~Rham}) if $\dim_{K_0}
\bbD_\crys(D)$ (resp. $\dim_{K_0} \bbD_\st(D)$,
$\dim_{K_0^\mathrm{ur}} \bbD_\pst(D)$, $\dim_K \bbD_\dR(D)$) agrees
with $\rank_\calR D=[L:\QQ_p] d$.  We define \emph{the multiplicity of
  the Hodge--Tate weight $i$} to be $d^i = d^i(D) = \dim_L \Gr^i
\bbD_\dR(D)$ (disregarding the $K$-module structure), and in
particular the cyclotomic character $\chi$ has weight $-1$ with
multiplicity $[K:\Qp]$.  When $L$ contains the images of all
embeddings of $K$ into $\overline \QQ_p$, for each embedding $\sigma:
K \to L$, we define the \emph{multiplicity of the $\sigma$-Hodge--Tate
  weight $i$} to be $d^i_\sigma = d^i_\sigma(D) = \dim_L \left(\Gr^i
\bbD_\dR(D) \otimes_{L \otimes K, 1 \otimes \sigma} L\right)$.

The $p$-adic monodromy theorem asserts that being de~Rham is
equivalent to being potentially semistable.  The analogue of ``weakly
admissible implies admissible'' is that $\bbD_\crys$
(resp.\ $\bbD_\st$, $\bbD_\pst$) induces an equivalence of categories
from crystalline (resp.\ semistable, potentially semistable)
$(\vphi,\Ga)$-modules and filtered $\vphi$-modules (resp.\ filtered
$(\vphi,N)$-modules, filtered $(\vphi,N,G)$-modules) in a manner
taking the $\vphi$-Newton polygon (\`a la Kedlaya) on the source to
the Newton-minus-Hodge polygon on the target.

\begin{lemma}\label{L:HT90}
Let $0 \to D_1 \to D_2 \to D_3 \to 0$ be a short exact sequence of
de~Rham $(\vphi,\Ga)$-modules.  Then for $?=\st,\pst$ the sequence $0
\to \bbD_?(D_1) \to \bbD_?(D_2) \to \bbD_?(D_3) \to 0$ is exact.
\end{lemma}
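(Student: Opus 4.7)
\emph{Strategy.} My plan is to reduce both claims to a short dimension count. The relevant input is the general inequality
\[
\dim_L \bbD_?(D) \;\leq\; [L:\Qp]\cdot d(D) \qquad (? \in \{\st,\pst,\dR\}),
\]
with equality if and only if $D$ is of type $?$, together with the automatic left-exactness of $\bbD_?$ (for $\bbD_\st$ because it is built out of $\Ga$-invariants; for $\bbD_\pst$ as a filtered colimit of such functors). Consequently, once I exhibit equality of dimensions on the two sides of the candidate short exact sequence, left-exactness plus additivity of $d(\cdot)$ will force the right-hand map to be surjective.

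\emph{The $\bbD_\pst$ case.} By the $p$-adic monodromy theorem (quoted in the paper just above the lemma), each $D_i$ is potentially semistable, so each saturates the bound: $\dim_L \bbD_\pst(D_i) = [L:\Qp]\cdot d(D_i)$. Writing the left-exact sequence
\[
0 \to \bbD_\pst(D_1) \to \bbD_\pst(D_2) \to \bbD_\pst(D_3),
\]
the dimensional identity $d(D_2) = d(D_1) + d(D_3)$ together with these equalities forces the final map to be surjective, completing this case.

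\emph{The $\bbD_\st$ case.} Here the $D_i$ need not be semistable over $K$, so I will argue by descent. Choose a finite Galois extension $K'/K$ inside $K^\alg$ such that $D_2|_{K'}$ is semistable (possible since $D_2$ is pst). Restriction preserves the de~Rham property, so $D_1|_{K'}$ and $D_3|_{K'}$ are de~Rham; applying the dimension argument of the previous paragraph with $\bbD_\st$ in place of $\bbD_\pst$ then shows that $D_1|_{K'}$ and $D_3|_{K'}$ are actually semistable and that
\[
0 \to \bbD_\st(D_1|_{K'}) \to \bbD_\st(D_2|_{K'}) \to \bbD_\st(D_3|_{K'}) \to 0
\]
is short exact as a sequence of $L$-vector spaces. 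Finally, $\Gal(K'/K)$ is a finite group acting on characteristic-zero vector spaces, so taking its invariants is an exact functor (Maschke). Combined with the identification
\[
\bbD_\st(D_i) \;=\; \bbD_\st(D_i|_{K'})^{\Gal(K'/K)},
\]
this yields the desired short exact sequence over $K$.

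\emph{Main obstacle.} The step I expect to require the most care is the descent identification $\bbD_\st(D) = \bbD_\st(D|_{K'})^{\Gal(K'/K)}$; this is the Hilbert-90 content of the lemma's internal label. Concretely, one must match the $\log\pi_K$ adjoined on the left to the $\log\pi_{K'}$ adjoined on the right, verify that the two underlying rings-with-$\Ga$-action fit into a compatible tower, and check that the $\Ga_K$-invariants on the left correspond exactly to the $\Gal(K'/K)$-fixed part of the $\Ga_{K'}$-invariants on the right. Once this standard-but-bookkeeping step is in hand, everything else is dimension counting.
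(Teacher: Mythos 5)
Your proof is correct and takes essentially the same route as the paper: the $\pst$ case is the $p$-adic monodromy theorem plus left-exactness and a dimension count, and the $\st$ case is deduced by Galois descent, where the paper phrases the exactness of taking invariants via Hilbert's Theorem~90 and you use averaging over the finite group $\Gal(K'/K)$ (which works, since the semilinear action is in particular $\Qp$-linear). One bookkeeping remark: since $\bbD_\pst(D)$ is a $K_0^{\mathrm{ur}}\otimes_{\Qp}L$-module, the dimensions in your count must be taken over $K_0^{\mathrm{ur}}$ (resp.\ over the unramified coefficients for $\bbD_\st$), not over $L$, exactly as in the paper's definition of (potentially) semistable; with that adjustment the count goes through unchanged.
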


\begin{proof}
The claim for $?=\pst$ follows from the $p$-adic monodromy theorem,
and one deduces from this the claim for $?=\st$ because Hilbert's
Theorem~90 shows that taking $G$-invariants is exact in this case.
\end{proof}

We will have need for the following $\Ga$-equivariant version of
elementary divisors over $K_\infty[\![t]\!]$, whose proof we leave as an exercise.

\begin{lemma}\label{L:elementary divisors}
Suppose $V$ is a (nonzero) finite-dimensional $K$-vector space,
considered with trivial $\Ga$-action, and let $\calL = V \otimes_K
K_\infty[\![t]\!]$ and $\calW = \calL[1/t]$ have the obvious
semilinear actions of $\Ga$.  Let $\calL'$ be another $\Ga$-stable
$K_\infty[\![t]\!]$-lattice in $\calW$.  Then there exist a $K$-basis
$\{v_i\}$ of $V$ (hence also a $K_\infty[\![t]\!]$-basis of $\calL$)
and integers $a_i$ such that $\calL'$ is the $K_\infty[\![t]\!]$-span
of the $t^{a_i} v_i$.
\end{lemma}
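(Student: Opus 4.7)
The plan is to exhibit an increasing $K$-filtration on $V$ whose graded pieces are controlled by the elementary divisors of $\calL'$ relative to $\calL$, split this filtration, and verify that the resulting decomposition recovers $\calL'$.

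First I would apply the elementary divisor theorem for the principal ideal ring $K_\infty[\![t]\!]$ to obtain (non-equivariantly) integers $a_1 \geq \cdots \geq a_n$ and a $K_\infty[\![t]\!]$-basis $\tilde f_1, \ldots, \tilde f_n$ of $\calL$ with $\calL' = \bigoplus_i t^{a_i}K_\infty[\![t]\!]\tilde f_i$. The multiplicities $m_c := \#\{i : a_i = c\}$ are intrinsic invariants of the pair $(\calL, \calL')$, and $\sum_c m_c = n$.

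Next, for each $k \in \ZZ$ set $V^{(k)} := V \cap t^{-k}\calL' \subseteq V$; this is a $K$-subspace because $V$ and $t^{-k}\calL'$ are both $\Gamma$-stable in $\calW$ and $\Gamma$ acts trivially on $V$. The $V^{(k)}$ form an increasing filtration with $V^{(k)} = 0$ for $k \ll 0$ and $V^{(k)} = V$ for $k \gg 0$, so $\sum_c \dim_K V^{(c)}/V^{(c-1)} = n$.

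The key step is the dimension count $\dim_K V^{(c)}/V^{(c-1)} = m_c$. A direct computation (using $\calW^\Gamma = V$) yields $(\calL')^{\chi^c} = t^c V^{(c)}$ and $(t\calL')^{\chi^c} = t^c V^{(c-1)}$, so the quotient map induces an injection $V^{(c)}/V^{(c-1)} \hookrightarrow (\calL'/t\calL')^{\chi^c}$. The hard part will be to identify the target's $K$-dimension as $m_c$, which I would do by analyzing $\calL'/t\calL'$ as a finite-dimensional $K_\infty$-semilinear $\Gamma$-representation. The $\Gamma$-stability of $\calL'$ forces the matrix $A(\gamma) \in GL_n(K_\infty[\![t]\!])$ representing $\Gamma$ on $\calL$ in the Smith basis to satisfy $A_{ji}(\gamma) \in t^{\max(a_j - a_i,\, 0)}K_\infty[\![t]\!]$; this triangular structure exhibits $\calL'/t\calL'$ as an iterated extension of pure Hodge--Tate pieces of weights $c$ and multiplicities $m_c$, and Tate's vanishing $H^1(\Gamma, K_\infty(n)) = 0$ for $n \neq 0$ forces these extensions to split, giving $\dim_K (\calL'/t\calL')^{\chi^c} = m_c$. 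Combined with the injection, this yields $\dim V^{(c)}/V^{(c-1)} \leq m_c$; since $\sum_c \dim V^{(c)}/V^{(c-1)} = n = \sum_c m_c$, each such inequality must be an equality.

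Finally I would pick a $K$-splitting $V = \bigoplus_c V_c$ of the filtration, with $V_c \subseteq V^{(c)}$ and $\dim V_c = m_c$, and set $M := \bigoplus_c t^c V_c \otimes_K K_\infty[\![t]\!]$. Then $M \subseteq \calL'$ (since $t^c V_c \subseteq \calL'$); both are free $K_\infty[\![t]\!]$-lattices of rank $n$ in $\calW$, and they have equal total lattice indices in $\calL$ (namely $\sum_c c \cdot m_c = \sum_i a_i$), so $M = \calL'$. Any $K$-basis $\{v_i\}$ of $V$ adapted to the decomposition $V = \bigoplus_c V_c$ then satisfies the conclusion with $a_i$ equal to the unique $c$ for which $v_i \in V_c$.
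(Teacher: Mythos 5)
The paper gives no proof of this lemma to compare against (it is explicitly left as an exercise), so I can only judge your argument on its own terms: the skeleton is correct and does prove the statement — the non-equivariant Smith normal form, the filtration $V^{(c)} = V \cap t^{-c}\calL'$, the identification $(\calL')^{\chi^c} = t^c V^{(c)}$ and the resulting injection $V^{(c)}/V^{(c-1)} \hookrightarrow (\calL'/t\calL')^{\chi^c}$, the counting argument, and the final index comparison are all sound.

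Two points in your key step need more care than your sketch provides. First, the divisibility $A_{ji}(\gamma) \in t^{\max(a_j-a_i,0)}K_\infty[\![t]\!]$ by itself only exhibits $\calL'/t\calL'$ as a successive extension of semilinear representations whose cocycles are $\chi(\gamma)^c\,\overline{B}_c(\gamma)$, where $\overline{B}_c$ is the weight-$c$ diagonal block of $A(\gamma)$ reduced mod $t$. To call these pieces pure Hodge--Tate of weight $c$, i.e.\ isomorphic to $K_\infty(\chi^c)^{m_c}$ — and in particular to know that the $\chi^c$-eigenvectors of the pieces of weight $c' \neq c$ vanish, which is what the bound $\dim_K(\calL'/t\calL')^{\chi^c} \leq m_c$ really uses — you must also show each $\overline{B}_c$ is a trivial cocycle. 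This is true but requires an argument: for instance, the same divisibility shows that the span of the $\tilde f_i$ with $a_i \leq c$ gives a $\Gamma$-stable filtration of $\calL/t\calL \cong V \otimes_K K_\infty$, the trivial semilinear representation, and Galois descent at the finite layers $K_m$ (then passing to the union) shows these subspaces and their graded pieces are defined over $K$. Second, your appeal to $H^1(\Gamma,K_\infty(n))=0$ for $n \neq 0$ is both slightly misstated — Tate's vanishing is for the completion $\wh{K_\infty}$, and the uncompleted statement needs an extra reduction-to-finite-level argument — and unnecessary: once the graded pieces are identified, left exactness of taking $\chi^c$-eigenvectors along your filtration already gives $\dim_K(\calL'/t\calL')^{\chi^c} \leq m_c$, and your own counting argument upgrades this to the equality that the final lattice-index computation requires. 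With these repairs the proof is complete.
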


\emph{For the rest of this section, assume that $D$ is de~Rham}, so that by definition
\[
h^0_\ga(\bbD_\dif^+(D)[1/t]) = [K:\Qp]d = d_- + d_+,
\]
where $d_- = d_-(D)$ (resp. $d_+ = d_+(D)$) are
the number of negative (resp. nonnegative) Hodge--Tate weights of $D$. (The multiplicity $d_-$, which is only used in this Section~\ref{S:review}, should not be confused with the integer $d^-$ appearing in Section~\ref{S:parity}.)
The $\Ga$-equivariant elementary divisor theory, applied to $V =
\bbD_\dR(D)$ and $\calL' = \bbD_\dif^+(D)$ (forgetting the structure of the coefficient field $L$), shows that
$\bbD_\dif^+(D)$ is isomorphic, as a $K_\infty\llbracket t \rrbracket$-module with $\Gamma$-action, to
$\bigoplus_{i=1}^{[L:\QQ_p]d} t^{a_i}K_\infty[\![t]\!]$ for integers $a_i$.  
Each factor $t^{a_i}K_\infty\llbracket t\rrbracket$ for $a_i$ positive (resp. $a_i$ nonpositive) contributes $1/[L:\Qp]$ to the multiplicity of the negative (resp. nonnegative) Hodge--Tate weight $-a_i$ on the one hand, and contributes $0$ (resp. $1/[L:\Qp]$) to $h_\gamma^i(\bbD_\dif^+(D))$ for $i=0,1$ on the other hand.
From this, one
deduces easily that, for
$i=0,1$, one has $h^i_\ga(\bbD_\dif^+(D)[1/t]) = [K:\Qp]d$, $h^i_\ga(\bbD_\dif^+(D)) = d_+$, and the natural
map $H^i_\ga(\bbD_\dif^+(D)) \to H^i_\ga(\bbD_\dif^+(D)[1/t])$ is
injective.  Connecting homomorphisms $\delta^i_D$ for $i=0,1$, are
constructed in \cite[Theorem~2.8]{Nak} so that the sequence
\begin{align}
\label{E:fundamental exact sequence}
0 \longrightarrow& H_{\vphi,\ga}^0(D)
 \longrightarrow H_{\vphi,\ga}^0(D[1/t]) \oplus H^0_\ga(\bbD_\dif^+(D))
 \xrightarrow{\ -\ } H^0_\ga(\bbD_\dif^+(D)[1/t]) \\
 \nonumber
\xrightarrow{\; \delta^0_D\; } & H_{\vphi,\ga}^1(D)
 \longrightarrow H_{\vphi,\ga}^1(D[1/t]) \oplus H^1_\ga(\bbD_\dif^+(D))
 \xrightarrow{\ -\ } H^1_\ga(\bbD_\dif^+(D)[1/t])\\
 \nonumber
 \xrightarrow{\;\delta^1_D\;} & H_{\vphi,\ga}^2(D) \longrightarrow 0
\end{align}
is exact, where ``$-$'' denotes the difference of the natural maps.
One should view \eqref{E:fundamental exact sequence} as a
generalization of the fundamental exact sequence in $p$-adic Hodge
theory.

One defines subspaces $H^1_?(D)$ of $H^1(D)$, for $?=e,f,\st,g+,g$, by
the formulas
\begin{align*}
H_e^1(D) &= \ker\big( H^1_{\vphi,\ga}(D) \to H^1_{\vphi,\ga}(D[1/t]) \,\big), \\
H_f^1(D) &= \ker\big( H^1_{\vphi,\ga}(D) \to H^1_\ga(D[1/t])\,\big), \\
H_\st^1(D) &= \ker\big( H^1_{\vphi,\ga}(D) \to H^1_\ga(D[1/t,\log\pi])\,\big), \\
H_{g+}^1(D) &= \ker\big( H^1_{\vphi,\ga}(D) \to H^1_\ga(\bbD_\dif^+(D))\,\big), \\
H_g^1(D) &= \ker\big( H^1_{\vphi,\ga}(D) \to H^1_\ga(\bbD_\dif^+(D)[1/t])\,\big).
\end{align*}
One may rephrase these definitions as follows.  Define $K_e = \Qp$,
$K_f = K_\st = K_0$, and $K_{g+} = K_g = K$, and define $\bbD_e(D) =
H^0(D[1/t])$, $\bbD_f = \bbD_\crys$, $\bbD_\st$ as given above,
$\bbD_{g+} = \bbD_\dR^+$, and $\bbD_g = \bbD_\dR$, so that $\bbD_?(D)$
is a $(K_? \otimes_{\QQ_p} L)$-module for each of $?=e,f,\st,g+,g$.  If the class $c \in
H^1(D)$ corresponds to the extension of $(\vphi,\Ga)$-modules $0 \to D
\to E_c \to \calR \to 0$, then one has $c \in H^1_?(D)$ if and only if
$\rank_{K_? \otimes_{\Qp} L} \bbD_?(E_c) = \rank_{K_? \otimes_{\Qp} L}
\bbD_?(D)+1$ (the ranks considered as locally constant functions on
$\Spec(K_? \otimes_{\Qp} L)$).

There are obvious inclusions $H^1_e(D) \subseteq H^1_f(D) \subseteq
H^1_\st(D) \subseteq H^1_g(D)$ and $H^1_{g+}(D) \subseteq H^1_g(D)$.
Two of these inclusions are in fact equalities: applying Lemma~\ref{L:HT90}
to the reformulated definitions shows that $H^1_\st(D) = H^1_g(D)$, and
because $H^i_\ga(\bbD_\dif^+(D)) \hookrightarrow
H^i_\ga(\bbD_\dif^+(D)[1/t])$ one has $H^1_{g+}(D) = H^1_g(D)$.  The
injectivity of $H^1_\ga(\bbD_\dif^+(D)) \to
H^1_\ga(\bbD_\dif^+(D)[1/t])$ implies that $\img \delta^0_D =
H^1_e(D)$, whence $h^1_e(D) = d_- + h^0(D) - h^0(D[1/t])$, and it is
shown in \cite[Corollary~1.4.5]{Ben} that $h^1_f(D) = d_- + h^0(D)$.
It is also shown in \cite[Corollary~1.4.10]{Ben} that $H^1_f(D)$ and
$H^1_f(D^*(1))$ are orthogonal complements under Tate duality, and
similarly the commutative diagram of perfect pairings in \cite[Lemma~2.15(1)]{Nak}, namely
\[
\xymatrix{
H^0_\gamma(\bbD^+_\dif(D)[1/t]) \ar[d]^{\delta_D^0} \ar@{}[r]|{\times} & H^1_\gamma(\bbD^+_\dif(D^*(1))[1/t]) \ar[r] & K \otimes_{\Qp} L \ar[d]^{\mathrm{Tr}_{K/\QQ_p \otimes \mathrm{id}}} 
\\
H^1_{\vphi, \gamma}(D) \ar@{}[r]|\times & H^1_{\vphi, \gamma}(D^*(1)) \ar[u]_\iota \ar[r] & L,
}
\]
implies that $\img \delta_D^0 = H^1_e(D)$ is the  orthogonal complement of $\ker(\iota) = H^1_g(D^*(1))$.  It
follows that $h^1_{/f}(D) = h^1_f(D^*(1))$ and $h^1_{/g}(D) =
h^1_e(D^*(1))$, where we write $h^1_{/?}(D) = h^1(D)-h^1_?(D) = \dim_L
H^1(D)/H^1_?(D)$ for $?=e,f,g$.  We summarize the dimensions as
follows.

\begin{formulary}\label{F:formulary}
\begin{align*}
h^0(D) = h^2(D^*(1))
 &= \dim_L \Fil^0 \bbD_\crys(D)^{\vphi=1}, \\
h^1(D) = h^1(D^*(1))
 &= [K:\Qp]d + h^0(D) + h^0(D^*(1)), \\
h^0(D[1/t])
 &= \dim_L \bbD_\crys(D)^{\vphi=1}, \\
h^1(D[1/t]) &= [K:\Qp]d + h^0(D[1/t]), \\
h^2(D[1/t]) &= 0, \\
h^1_e(D) = h^1_{/g}(D^*(1))
 &= d_- + h^0(D) - h^0(D[1/t]), \\
h^1_f(D) = h^1_{/f}(D^*(1))
 &= d_- + h^0(D), \\
h^1_g(D) = h^1_{g+}(D) = h^1_\st(D)
 &= d_- + h^0(D) + h^0(D^*(1)[1/t]).
\end{align*}
\end{formulary}

For completeness, we describe the graded pieces of the filtration
\[
0 \subseteq H^1_e(D) \subseteq H^1_f(D) \subseteq H^1_g(D) \subseteq
H^1(D).
\]
From the exact sequence \eqref{E:fundamental exact sequence} one sees immediately that
\[
H^1_e(D)
= \img \delta^0_D \cong
\bbD_\dR(D)/(\bbD_\dR^+(D)+\bbD_\crys(D)^{\vphi=1}).
\]
Hence, also, $H^1(D)/H^1_g(D)$ is dual to
$\bbD_\dR(D^*(1))/(\bbD_\dR^+(D^*(1))+\bbD_\crys(D^*(1))^{\vphi=1})$.
Similarly, it follows from the short exact sequence
\[
0 \to H^0_\ga(D[1/t])/(\vphi-1) \to H^1_{\vphi, \ga}(D[1/t]) \to
H^1_\ga(D[1/t])^{\vphi=1} \to 0
\]
that $H^1_f(D)/H^1_e(D) \hookrightarrow H^0_\ga(D[1/t])/(\vphi-1) =
\bbD_\crys(D)/(\vphi-1)$, and comparing the dimension on the right
with Formulary~\ref{F:formulary} we see that this inclusion is an
isomorphism.  Hence, also, $H^1_g(D)/H^1_f(D)$ is dual to
$\bbD_\crys(D^*(1))/(\vphi-1)$.

\begin{remark}
All numerical invariants of $D$ defined in this section have been
arranged to be invariant under replacing $L$ by a finite extension
$L'$, and $D$ by the $(\vphi,\Ga)$-module over $\calR_{L'}$ given by
$D \otimes_L L' = D \otimes_{\calR_L} \calR_{L'}$.  Therefore, in
arguments involving these constants we may enlarge $L$ in this manner
without any harm.
\end{remark}

\subsection{The Panchishkin case}

\begin{notation}
\label{N:Panchishkin condition}
We say a $(\varphi,\Gamma)$-module $D$ is \emph{Panchishkin} if it sits in a
short exact sequence $S: 0 \to D^+ \to D \to D^- \to 0$ of
$(\vphi,\Ga)$-modules with each $D^\pm$ de~Rham with $d_\mp(D^\pm) = [K:\Qp]d(D^\pm)$, i.e. $D^+$ (resp. $D^-$) only has negative (resp. nonnegative) Hodge--Tate weights.  
\end{notation}

We assume that $D$ is Panchishkin for the rest of this section. We
warn the reader that unlike in the case of Galois representations, the
sequence $S$ is not necessarily uniquely determined by $D$. Using
$\Ga$-equivariant elementary divisors, Lemma~\ref{L:elementary
  divisors}, the Panchishkin condition implies that $D$ is de~Rham,
and $d_\pm = d_\pm(D) = [K:\Qp]d(D^\mp)$.  (In fact $D$ is semistable
if both $D^\pm$ are, by Lemma~\ref{L:HT90}.)  One also deduces from
$\Ga$-equivariant elementary divisors that the extension of
$K_\infty[\![t]\!]$-modules with $\Gamma$-actions
\[
0 \to \bbD_\dif^+(D^+) \to \bbD_\dif^+(D) \to \bbD_\dif^+(D^-) \to 0
\]
is split, that for $i=0,1$ one has $H^i_\ga(\bbD_\dif^+(D^+))=0$ so
$H^i_\ga(\bbD_\dif^+(D)) \stackrel\sim\to H^i_\ga(\bbD_\dif^+(D^-))$,
and that $h^i_\ga(\bbD_\dif^+(D^-)) = d_+$.  In particular,
$H^1_g(D^+) = H^1(D^+)$.  Letting $\delta^i_S \cn H^i(D^-) \to
H^{i+1}(D^+)$ denote the usual connecting map for the long exact
cohomology sequence for $S$, a simple diagram chase shows that the
sequence
\begin{equation}
\label{E:long exact sequence with g}
0 \to H^0(D^+) \to H^0(D) \to H^0(D^-)
\xrightarrow{\delta^0_S} H^1_g(D^+) \to H^1_g(D) \to H^1_g(D^-)
\to X \to 0,
\end{equation}
is exact, where $X$ satisfies
\[
\dim_L X = h^0((D^+)^*(1)[1/t]) + h^0((D^-)^*(1)[1/t]) -
h^0(D^*(1)[1/t]).
\]

\begin{lemma}\label{L:panchishkin H1f}
Let $D$ be Panchishkin, and assume that $\bbD_\crys(D)^{\vphi=1} =
\bbD_\crys(D^*(1))^{\vphi=1} = 0$.  Then $\delta_S^0$ and the
functorial map $H^1(D^+) \to H^1(D)$ fit into a short exact sequence
\[
0 \to H^0(D^-) \xrightarrow{\delta^0_S} H^1(D^+) \to H^1_f(D) \to 0.
\]
\end{lemma}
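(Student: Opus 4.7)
The plan is to combine the six-term exact sequence \eqref{E:long exact sequence with g} with the observation that, under our hypotheses, the Bloch--Kato subspaces $H^1_e$, $H^1_f$, $H^1_g$ of $H^1(D)$ all coincide, while $H^1_e(D^-)$ vanishes.

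First I would extract the basic vanishings from the hypotheses. Because $H^0(D) = \Fil^0 \bbD_\crys(D)^{\vphi=1} \subseteq \bbD_\crys(D)^{\vphi=1} = 0$, and likewise for $D^*(1)$, we have $H^0(D) = H^0(D^*(1)) = 0$; the hypotheses directly give $H^0(D[1/t]) = H^0(D^*(1)[1/t]) = 0$. By left-exactness of $H^0$ along the inclusions $D^+ \hookrightarrow D$ and $(D^-)^*(1) \hookrightarrow D^*(1)$ (both before and after inverting $t$), these vanishings descend to $H^0(D^+) = H^0(D^+[1/t]) = H^0((D^-)^*(1)) = H^0((D^-)^*(1)[1/t]) = 0$. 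The long exact sequence of $S$ then forces $\delta_S^0$ to be injective, yielding
\[
0 \to H^0(D^-) \xrightarrow{\delta_S^0} H^1(D^+) \to \img\bigl(H^1(D^+) \to H^1(D)\bigr) \to 0,
\]
so the task reduces to identifying this image with $H^1_f(D)$.

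Next I would feed these vanishings into Formulary~\ref{F:formulary}. One computes $h^1_e(D) = h^1_f(D) = h^1_g(D) = d_-(D) = [K:\Qp]d(D^+)$, so $H^1_e(D) = H^1_f(D) = H^1_g(D)$ as subspaces of $H^1(D)$; in particular $H^1_f(D) = \img(\delta^0_D)$. The same Formulary applied to $D^-$ --- which has only nonnegative Hodge--Tate weights, so $d_-(D^-) = 0$ and $H^0(D^-) = H^0(D^-[1/t])$ --- gives $h^1_e(D^-) = 0$, hence $H^1_e(D^-) = 0$.

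Finally, I would invoke \eqref{E:long exact sequence with g} itself. Using $H^1_g(D^+) = H^1(D^+)$ (remarked upon in the excerpt, since $d_+(D^+) = 0$) and $H^1_g(D) = H^1_f(D)$, this reads
\[
0 \to H^0(D^-) \to H^1(D^+) \to H^1_f(D) \to H^1_g(D^-) \to X \to 0.
\]
By naturality of the fundamental exact sequence \eqref{E:fundamental exact sequence} with respect to the quotient $D \twoheadrightarrow D^-$, the image of $H^1_e(D) = \img(\delta^0_D)$ inside $H^1(D^-)$ lies in $\img(\delta^0_{D^-}) = H^1_e(D^-) = 0$; since $H^1_g(D^-) \hookrightarrow H^1(D^-)$, the composition $H^1_f(D) \to H^1_g(D^-)$ must also vanish. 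Thus $H^1(D^+) \to H^1_f(D)$ is surjective, completing the desired short exact sequence. The main subtlety is the numerical collapse $H^1_e = H^1_f = H^1_g$ on $D$ together with $H^1_e(D^-) = 0$; once these are established via Formulary~\ref{F:formulary}, the naturality argument wraps things up cleanly.
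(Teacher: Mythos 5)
Your argument is correct, and its first half (the reductions $H^1_g(D^+)=H^1(D^+)$, $H^0(D)=H^0(D^+)=0$, $H^1_f(D)=H^1_g(D)$, so that everything hinges on surjectivity onto $H^1_f(D)$ via \eqref{E:long exact sequence with g}) coincides with the paper's. Where you diverge is the surjectivity step. The paper argues numerically: by \eqref{E:long exact sequence with g} and Formulary~\ref{F:formulary} the cokernel has dimension $h^0(D^-)-h^0((D^+)^*(1)[1/t])\geq 0$, and since the hypotheses are symmetric under replacing $D,D^\pm,S$ by $D^*(1),(D^\mp)^*(1),S^*(1)$, the reverse inequality also holds, forcing the cokernel to vanish. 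You instead show the next map $H^1_f(D)=H^1_e(D)\to H^1_g(D^-)$ is literally zero, because $H^1_e(D^-)=0$ and $H^1_e$ is functorial, and then quote exactness. This is a genuinely different mechanism: yours avoids the duality-symmetry trick and explains structurally why the connecting map dies (giving $H^1_g(D^-)\cong X$ as a byproduct), while the paper's avoids the extra inputs you need, namely functoriality of the fundamental exact sequence (not recorded in the paper, though your use of it can be replaced by the immediate functoriality of $H^1_e$ as a kernel, since $D\to D^-$ induces a commutative square with $D[1/t]\to D^-[1/t]$) and the equality $H^0(D^-)=H^0(D^-[1/t])$, which you assert from nonnegativity of the weights but should justify: since $h^0_\ga(\bbD_\dif^+(D^-))=d_+=\dim_L\bbD_\dR(D^-)$ one has $\bbD_\dR^+(D^-)=\bbD_\dR(D^-)$, i.e. $\Fil^0\bbD_\crys(D^-)=\bbD_\crys(D^-)$, whence the equality (equivalently, $H^1_e(D^-)\cong\bbD_\dR(D^-)/(\bbD_\dR^+(D^-)+\bbD_\crys(D^-)^{\vphi=1})=0$ directly). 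In the paper's route these identities come out as consequences of the symmetry argument rather than being needed as inputs; both proofs are valid and of comparable length.
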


\begin{proof}
In fact, we already have $H^1_g(D^+) = H^1(D^+)$, and
Formulary~\ref{F:formulary} gives $H^0(D) = H^0(D^+) = 0$ and
$H^1_f(D) = H^1_g(D)$, so it suffices to check the surjectivity on the
right. 
In light of \eqref{E:long exact sequence with g}, the desired cokernel has dimension 
$
\dim H^1_g(D^-) - \dim_L X$.
Noting that $d_-(D^-) = 0$ and our hypothesis implies $h^0(D^*(1)[1/t])=0$,  Formulary~\ref{F:formulary}
shows that 
\[
\dim H^1_g(D^-) - \dim_L X = h^0(D^-) -  h^0((D^+)^*(1)[1/t]).
\]
Since this dimension is a nonnegative, it gives the second inequality in
\[
h^0(D^-[1/t]) \geq h^0(D^-) \geq h^0((D^+)^*(1)[1/t]).
\]
Since our hypotheses are symmetric under replacing $D$
(resp.\ $D^\pm$, $S$) with $D^*(1)$ (resp.\ $(D^\mp)^*(1)$, $S^*(1)$),
the reverse inequality also holds, and in fact there is equality
throughout.  The claim follows.
\end{proof}

\section{The parity conjecture in families}\label{S:parity}

In this section we show how to generalize \cite{N3} (always tacitly
taking into account \cite{N3e}) to our setting.
Our exposition will try to be self-contained and at the same time include a complete discussion of the compatibility with that paper.

In the first subsection we treat those purely local facts taking place
at primes over $p$, essentially generalizing \cite[\S3]{N3}.  The
second subsection, which is preparatory, gives definition and basic
facts concerning rational functions on rigid analytic spaces.  In the
third subsection, we treat those purely local facts taking place at
primes not over $p$.  In the final subsection we treat the purely
global facts, essentially generalizing \cite[\S\S4--5]{N3}.

In the first and third subsections, we assume the reader is familiar
with Weil--Deligne representations and local $\vep$-factors as in
\cite[\S\S1--2]{N3}, whose notations and claims we use without
modification.  Namely, $K/\Qell$ denotes a finite extension with
residue field $k$ of cardinality $q = \ell ^h$.  We write $G_K$
(resp.\ $I_K$, $W_K$) for its absolute Galois group (resp.\ inertia
group, Weil group), and $\nu: W_K/I_K \xrightarrow{\sim} \ZZ$ for the
isomorphism which sends a lift $f \in G_K$ of the geometric Frobenius
(i.e. inducing $x \mapsto x^{q^{-1}}$ on $k$) to $1$.  Let
$\unr(\alpha)$ be the one-dimensional Weil--Deligne representation
over $L$ with $N=0$, $I$ acting trivially, and $f$ acting through
multiplication by $\alpha$.  For example, $WD(\Qp(1)) = \unr(q^{-1})$.
One has $\unr(\alpha)^* \cong \unr(\alpha^{-1})$.  Recall the notation
$\spp(m)$ from \cite[Example~1.2.3]{N3}, and that $\spp(m)^* \cong
\spp(m)(1-m)$.  In paticular, $(\unr(\alpha) \otimes \spp(m))^*(1)
\cong \unr(q^{m-2}\alpha^{-1}) \otimes \spp(m)$.  The object
$\unr(\alpha) \otimes \spp(m)$ is pure of weight $n$ if and only if
$\alpha$ is a $q$-Weil number of weight $n+m-1$.

Note that rank one Weil--Deligne representations are identified with
certain characters of the Weil group, so composing with the local
Artin map allows us to view them as characters of $K^\times$, just as
is done with rank one representations of the Galois group.  For
example, $\unr(\alpha)$ is identified with the character that is
trivial on $\calO_K^\times$ and sends a uniformizer to $\alpha$.  For
any Weil--Deligne representation $\Delta$, we let $\det(\Delta)$ be
the character of $K^\times$ corresponding to the top exterior power of
$\Delta$.

\subsection{Local arguments: $\ell = p$}\label{S:local1}

This subsection is concerned with \cite[\S3]{N3}.  Here we let $K$ be
as in the start of this section, and we assume that the characteristic
$\ell$ of $k$ is equal to $p$.  We also fix a finite extension
$L/\Qp$.

The basic invariants of $p$-adic Galois reprsentations of $p$-adic
fields in \cite[\S3.1]{N3} (namely, Galois cohomology, Fontaine's
functors, and Bloch--Kato's subspaces) are replaced by their
generalizations to $(\vphi,\Ga)$-modules in the preceding section, but
no other modifications are necessary to that subsection.

Fix a field extension $E/K_0^\mathrm{ur}$ admitting an embedding $\tau
\cn L \hookrightarrow E$.  Fontaine's recipe for attaching a
Weil--Deligne representation $WD_\tau(V)$ to a semistable
representation representation $V$ of $G_K$ over $L$, described in
\cite[\S3.2]{N3}, passes first to $\bbD_\pst(V)$, and from
$\bbD_\pst(V)$ to $WD_\tau(V)$.  Since the second step does not
require the weak admissibility of $\bbD_\pst(V)$, it may applied
verbatim to $\bbD_\pst(D)$ for $D$ a potentially semistable
$(\vphi,\Ga)$-module to obtain $WD_\tau(D)$.  Namely, if $\rho_D$
denotes the usual $G_K$-action on $\bbD_\pst(D)$, the space
\[
WD_\tau(D)= \bbD_\pst(D) \otimes_{K_0^\mathrm{ur} \otimes_{\Qp} L, \id \otimes \tau} E
\]
with $w \in W_K$ acting $E$-linearly by $\rho_D(w) \circ
\vphi^{h\nu(w)} \otimes \id$ and $N$ acting by $N \otimes \id$ is a
Weil-Deligne representation over $K$.  The isomorphism class of
$WD_\tau(D)$ does not depend on $\tau$, so we abbreviate it to
$WD(D)$.
 
We shall need the following invariants for a potentially semistable
$(\vphi, \Ga)$-module $D$ (which should not be confused with the
$\calR_L$-rank $d(D)$ and the multiplicity $d_-(D)$ appearing in
Section~\ref{S:review}):
\[
d_L(D) = \sum_{i \in \ZZ} i \dim_L \Gr^i\bbD_\dR(D), \quad d^-(D) = \sum_{i <0} i \dim_L \Gr^i\bbD_\dR(D).
\]

The only further modification we require to \cite[\S3.2]{N3} is to
Proposition~3.2.6, which is replaced by the following analog.
\begin{lemma}
\label{L:det = det}
For each potentially semistable $(\vphi,\Gamma)$-module $D$, we have
\begin{equation}
\label{E:det = det}
(\mathrm{det}_E(WD(D)))(-1) = (-1)^{d_L(D)} (\det(D) )(-1).
\end{equation}
\end{lemma}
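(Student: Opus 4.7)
The plan is devissage to the rank-one case, followed by direct verification on characters.

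First, I would observe that both sides of \eqref{E:det = det} are multiplicative in short exact sequences $0 \to D_1 \to D_2 \to D_3 \to 0$ of potentially semistable $(\vphi,\Gamma)$-modules. For the left-hand side, Lemma~\ref{L:HT90} gives a short exact sequence $0 \to \bbD_\pst(D_1) \to \bbD_\pst(D_2) \to \bbD_\pst(D_3) \to 0$; extending scalars to $E$ yields a short exact sequence of Weil--Deligne representations, and the top exterior power is multiplicative on such. For the right-hand side, $\det(-)$ is multiplicative as a character-valued top exterior power, and $d_L(-)$ is additive because $\bbD_\dR$ together with its filtration is exact on pst sequences (combining Lemma~\ref{L:HT90} with finite Galois descent to the semistable case).

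Next, I would reduce to the rank-one case. Using the equivalence of categories recalled in \S\ref{S:numerology} between pst $(\vphi,\Gamma)$-modules and filtered $(\vphi,N,G)$-modules, every pst module $D$ admits, after possibly enlarging $L$, a flag of saturated pst sub-$(\vphi,\Gamma)$-modules with rank-one graded pieces, since the linear-algebra category of filtered $(\vphi,N,G)$-modules admits such composition series over a sufficiently large extension of $L$. By the multiplicativity above, \eqref{E:det = det} reduces to the case $\rank_{\calR_L} D = 1$.

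In the rank-one case, $D \cong \calR_L(\delta)$ for a unique character $\delta \colon K^\times \to L^\times$, and $\det(D) = \delta$. Since $-1 \in \calO_K^\times$, both sides depend only on $\delta|_{\calO_K^\times}$, and both are multiplicative in $\delta$. Enlarging $L$ to contain all embeddings of $K$, the group of pst characters of $\calO_K^\times$ is generated by the finite-order characters together with the algebraic characters $x \mapsto \sigma(x)$ for embeddings $\sigma \colon K \hookrightarrow L$, so it suffices to verify \eqref{E:det = det} on these generators. For finite-order $\delta$, the module $D$ is crystalline with zero Hodge--Tate weights, so $d_L(D)=0$; $WD(D)$ is the pullback of $\delta$ via local Artin, and both sides evaluate to $\delta(-1)$. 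For $\delta = \sigma$, a direct computation with the construction of $\calR_L(\delta)$ shows $D$ is crystalline with a single nonzero Hodge--Tate weight of absolute value $1$ in the $\sigma$-direction, giving $d_L(D) = \pm 1$, $\det(D)(-1) = \sigma(-1) = -1$, and $\det_E(WD(D))(-1) = 1$ (since $WD(D)$ is unramified); both sides evaluate to $1$.

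The main obstacle will be making precise the flag reduction in the second step---ensuring that sub-$(\vphi,\Gamma)$-modules arising from composition series on the filtered $(\vphi,N,G)$-side remain saturated and pst---and reconciling the normalizations of Hodge--Tate weights, local Artin reciprocity, and $d_L$ in the rank-one computation, so that the powers of $-1$ cancel as claimed rather than the opposite sign.
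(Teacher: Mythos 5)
Your rank-one analysis is essentially the paper's: the paper also reduces to rank one and then uses the classification $\chi_D = \chi\prod_\sigma(\sigma\circ\chi_\pi)^{-n_\sigma}$ (finite-on-units part times Lubin--Tate/algebraic parts), checking the identity on each factor, exactly as you do on your generators. (Minor quibble: a finite-order $\delta$ gives a \emph{potentially} crystalline $D$, not a crystalline one, but since all Hodge--Tate weights vanish and $WD(D)$ is $\delta$ via the Artin map on inertia, your evaluation at $-1$ is unaffected.)

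The genuine gap is your reduction to rank one. You claim that, after enlarging $L$, every potentially semistable $D$ admits a flag of saturated pst sub-$(\vphi,\Ga)$-modules with rank-one graded pieces, ``since the linear-algebra category of filtered $(\vphi,N,G)$-modules admits such composition series.'' It does not: the finite group $\Gal(K'/K)$ acts on $\bbD_\pst(D)$, and this action can be absolutely irreducible in rank $\geq 2$. For instance, take $K'/K$ totally ramified Galois with $\Gal(K'/K)\cong S_3$ (e.g.\ $K=\QQ_3$, $K'=\QQ_3(\zeta_3,3^{1/3})$) and let $V$ be the $2$-dimensional irreducible Artin representation of $\Gal(K'/K)$; then $D=\bbD_\rig(V)$ is pst, $\bbD_\pst(D)$ is (with $\vphi=\id$, $N=0$, trivial filtration) just this $2$-dimensional representation of $S_3$, which has no rank-one subobject over any extension of $L$. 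So no such flag exists, and your multiplicativity step has nothing to devisse along. The repair is much simpler and is what the paper (following \cite[Proposition~3.2.6]{N3}) does: both sides of \eqref{E:det = det} depend only on the top exterior power of $D$ --- one has $\mathrm{det}_E(WD(D)) = \mathrm{det}_E\bigl(WD(\bigwedge^{d}D)\bigr)$ because $\bbD_\pst$ is a tensor functor, $d_L(D) = d_L(\bigwedge^{d}D)$ because the sum of Hodge--Tate weights is the weight of the determinant, and $\det(D)$ is by definition the character of $\bigwedge^{d}D$ --- so the assertion for $D$ is \emph{literally} the assertion for the rank-one module $\bigwedge^{d}D$, with no exact sequences or composition series needed. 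With that replacement, the rest of your argument goes through and coincides with the paper's proof.
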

\begin{proof}
As in the proof of \cite[Proposition~3.2.6]{N3} (with $V$ replaced by $D$), it suffices to treat the case when $D$ is of rank one, and hence is associated to a continuous character $\chi_D: K^\times \to L^\times$.  The potential semistability of $D$ forces $\chi_D$ to take the form of $ \chi \prod_{\sigma: K \hookrightarrow L}
(\sigma \circ \chi_\pi)^{-n_\sigma}$ with $\chi$ a character for which $\chi(\calO_K^\times)$ is finite, and $\chi_\pi$ the Lubin-Tate character defined as in \cite[Example~3.2.2(4)]{N3}.
The equality \eqref{E:det = det} for the character $\sigma\circ \chi_\pi$ is treated in the proof of \cite[Proposition~3.2.6]{N3} and for the character $\chi$ is an easy direct computation.
\end{proof}

Some statements in \cite[\S3.3]{N3} become false for
$(\vphi,\Ga)$-modules.  In Definition~3.3.1 therein (and replaced by Notation~\ref{N:Panchishkin condition}), the $D^{\pm}$ are
no longer uniquely determined.  In Proposition~3.3.2(1), the first
equality holds (for a Panchishkin $D$), but the second equality may now fail (although it is
never used in that paper or this one).  Proposition~3.3.2(2) is
replaced by Lemma~\ref{L:panchishkin H1f} above; note that not
requiring any restriction to $K'/K$ over which $D$ becomes semistable
gives a slightly stronger result.

We conclude this subsection with a discussion of
\cite[Proposition~3.3.3]{N3} (by replacing it with
Proposition~\ref{P:panchishkin epsilon} below).  Since uniqueness of
the Panchishkin filtration fails, in our generality we will have to
\emph{assume} the conclusion of part (1), that $j$ induces
isomorphisms $D^\pm \stackrel\sim\to (D^\mp)^*(1)$.  Part (3) holds
verbatim which we include as \eqref{E:det + / det +} below.  We ignore
part (2) in the present paper, which is only used in \cite{N3} to
prove part (4), and we now prove (4) directly.

\begin{prop}\label{P:panchishkin epsilon}
Let $D$ be a Panchishkin $(\vphi,\Ga)$-module, pure (of weight $-1$),
with a symplectic pairing $j \cn D \stackrel\sim\to D^*(1)$ that
induces isomorphisms $D^\pm \stackrel\sim\to (D^\mp)^*(1)$.  Put
$\Delta^? = WD(D^?)$ for $?=\emptyset,\pm$.  Then
\begin{equation}
\label{E:det + / det +}
(\mathrm{det}_E(\Delta^+))(-1) \big / \det(D^+)(-1) = (-1)^{d_L(D^+)} = (-1)^{d^-(D)}, \textrm{ and}
\end{equation}
\begin{equation}\label{E:panchishkin epsilon}
\vep(\Delta) = (-1)^{h^0(D^-)}(-1)^{d^-(D)}\det(D^+)(-1).
\end{equation}
\end{prop}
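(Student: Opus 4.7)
The plan is to prove each assertion in turn, starting with \eqref{E:det + / det +}, which splits into two equalities.  The leftmost equality $(\det_E(\Delta^+))(-1) = (-1)^{d_L(D^+)}\det(D^+)(-1)$ is precisely Lemma~\ref{L:det = det} applied to the potentially semistable $(\vphi,\Ga)$-module $D^+$.  For the rightmost equality $(-1)^{d_L(D^+)} = (-1)^{d^-(D)}$, I would use the Panchishkin hypothesis as follows: since $D^+$ has only negative Hodge--Tate weights, the defining sum collapses to $d_L(D^+) = \sum_{i<0} i\,d^i(D^+) = d^-(D^+)$.  Meanwhile, the $\Ga$-equivariantly split short exact sequence $0 \to \bbD_\dif^+(D^+) \to \bbD_\dif^+(D) \to \bbD_\dif^+(D^-) \to 0$ noted in \S\ref{S:review} shows that $d^i(D) = d^i(D^+) + d^i(D^-)$ for each $i$, and $d^i(D^-) = 0$ for $i<0$ since $D^-$ has only nonnegative weights; combining these gives $d_L(D^+) = d^-(D)$.

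For \eqref{E:panchishkin epsilon}, I would follow the structure of Nekov\'a\v{r}'s proof of \cite[Proposition~3.3.3(4)]{N3}, which now carries over essentially verbatim since Lemma~\ref{L:det = det} and \eqref{E:det + / det +} are in hand.  Multiplicativity of $\vep$-factors along the exact sequence $0 \to \Delta^+ \to \Delta \to \Delta^- \to 0$ of Weil--Deligne representations, combined with the identification $\Delta^- \cong (\Delta^+)^*(1)$ provided by $j$, gives
\[
\vep(\Delta) = \vep(\Delta^+)\,\vep((\Delta^+)^*(1)).
\]
The local functional equation for Weil--Deligne representations evaluates the right-hand side as $\det_E(\Delta^+)(-1)$ times a sign determined by invariants of $\Delta^+$ (equivalently, of $\Delta^-$); applying \eqref{E:det + / det +} converts $\det_E(\Delta^+)(-1)$ into $(-1)^{d^-(D)}\det(D^+)(-1)$, yielding the claimed shape with a sign that still needs to be identified with $(-1)^{h^0(D^-)}$.

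To complete the identification, I would use Formulary~\ref{F:formulary}, which gives $h^0(D^-) = \dim_L \Fil^0\bbD_\crys(D^-)^{\vphi=1}$; since $D^-$ has only nonnegative Hodge--Tate weights, $\Fil^0 \bbD_\crys(D^-) = \bbD_\crys(D^-)$, so $h^0(D^-) = \dim_L \bbD_\crys(D^-)^{\vphi=1}$.  Unwinding Fontaine's recipe $WD(D^-) = \bbD_\pst(D^-) \otimes_{K_0^\unr \otimes L} E$ shows that this matches the dimension of the Frobenius-fixed part of $(\Delta^-)^{I, N=0}$, which is exactly the sign contribution coming out of the functional equation.  The main obstacle is the bookkeeping needed to track the various twists and normalization conventions in the local functional equation, especially the passage from $\rho^\vee$ to $\rho^*(1)$ and the interaction with the Weil--Deligne Frobenius; but this is done for Galois representations in \cite[\S3.3]{N3}, and because $WD(D)$ is built from $\bbD_\pst(D)$ in the same way as for Galois representations, Nekov\'a\v{r}'s local computations adapt line-for-line, with purity of weight $-1$ ensuring that the only surviving ambiguity is a sign.
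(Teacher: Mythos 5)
Your treatment of \eqref{E:det + / det +} is fine and agrees with the paper: the left-hand equality is Lemma~\ref{L:det = det} applied to $D^+$, and the right-hand one is immediate from the Panchishkin condition.

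Your argument for \eqref{E:panchishkin epsilon}, however, has a genuine gap at its first step. For Weil--Deligne representations the $\vep$-factor is \emph{not} multiplicative in short exact sequences: only the $\vep$-factor of the underlying Weil representation (equivalently of $\Delta^{N\text{-ss}}$) depends solely on the semisimplification, and the defect is exactly the correction term $\det(-f\mid\Delta^I/\Delta^{I,N=0})$ of \cite[(2.1.2.3)]{N3} (taking kernels of $N$ and $I$-invariants is not exact). So the display $\vep(\Delta)=\vep(\Delta^+)\,\vep((\Delta^+)^*(1))$ is false in general; if it were true, the route you sketch would terminate in a formula with no factor $(-1)^{h^0(D^-)}$ at all. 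The whole content of \eqref{E:panchishkin epsilon} is precisely that this monodromy correction has sign $(-1)^{h^0(D^-)}=(-1)^{\dim_E\Delta^{-,I,N=0,f=1}}$, i.e.\ the identity \eqref{E:reduction1}, and establishing it is not bookkeeping. Nor can it be borrowed ``line-for-line'' from Nekov\'a\v{r}: his proof of \cite[Proposition~3.3.3(4)]{N3} passes through part (2) of that proposition, which is exactly what is unavailable in the $(\vphi,\Ga)$-module setting (the Panchishkin filtration is not unique and there is no weak admissibility), which is why the paper proves (4) directly. That direct proof needs genuinely new input: one first shows $\dim_E\Delta^{-,I,N=0,f=1}\equiv\dim_E\Delta^{-,I,N=0,f\sim1}\pmod2$ by constructing, from purity (which makes $N\cn\Delta^{f\sim1}\to\Delta^{f\sim q^{-1}}$ an isomorphism) and the symplectic pairing $j$, a nondegenerate symmetric pairing on $\Delta^{-,I,N=0,f\sim1}$ for which $f$ is orthogonal; one then reduces to Frobenius-semisimple $\Delta$ with trivial inertia action, decomposes $\Delta$ into symplectically indecomposable summands, and uses purity once more to prove the congruence \eqref{E:delta f=1 mod 2}. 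None of these steps appears in your proposal; your correct observation that $h^0(D^-)=\dim_E\Delta^{-,I,N=0,f=1}$ (the paper's \eqref{E:h^0(D^-)}) is only the starting point of that argument.
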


\begin{proof}
The equality \eqref{E:det + / det +} follows from applying
Lemma~\ref{L:det = det} to $D^+$, so the rest of this proof is
concerned with \eqref{E:panchishkin epsilon}.  We first discuss the
term $(-1)^{h^0(D^-)}$.  Since all the Hodge--Tate weights of $D^-$
are nonnegative, one has
\begin{equation}\label{E:h^0(D^-)}
h^0(D^-) = \dim_L \bbD_\crys(D^-)^{\vphi=1}
= \dim_E \Delta^{-,I,N=0,f=1}.
\end{equation}
Letting $V^{A\sim\lambda}$ denote the generalized $\lambda$-eigenspace
of a linear operator $A$ on a vector space $V$, we claim that
\[
\dim_E \Delta^{-,I,N=0,f=1}
\equiv
\dim_E \Delta^{-,I,N=0,f\sim1}
\pmod2.
\]
This follows from \cite[Lemma~2.2.2]{N3} as soon as we show that $f$
is orthogonal for a nondegenerate, symmetric pairing on
$\Delta^{-,I,N=0,f\sim1}$.  To exhibit this pairing, consider the
diagram
\[\begin{array}{r@{\ }c@{\ }c@{\ }c@{\ }c@{\ }c@{\ }l}
0 \to & \Delta^{+,f\sim1} & \to & \Delta^{f\sim1} & \to &
  \Delta^{-,f\sim1} & \to 0 \\
& \downarrow & & \downarrow & & \downarrow \\
0 \to & \Delta^{+,f\sim q^{-1}} & \to & \Delta^{f\sim q^{-1}} & \to &
  \Delta^{-,f\sim q^{-1}} & \to 0,
\end{array}\]
where the vertical arrows are $N$.  Our purity hypothesis on $V$ shows
that the middle vertical arrow is an isomorphism, so the snake lemma
gives an isomorphism $a \cn \Delta^{-,N=0,f\sim1} \cong
\Delta^{+,f\sim q^{-1}}/N$.  On the other hand, $j$ induces $b \cn
\Delta^{+,f\sim q^{-1}}/N \cong (\Delta^{-,N=0,f\sim1})^*(1)$.  Write
$\langle\cdot,\cdot\rangle_{b\circ a} \cn \Delta^{-,N=0,f\sim1} \times
\Delta^{-,N=0,f\sim1} \to E(1)$ for the nondegenerate pairing adjoint
to $b \circ a$, and $\langle\cdot,\cdot\rangle_j \cn \Delta \times
\Delta \to E(1)$ for the nondegenerate pairing adjoint to $j$.  Given
$\delta,\delta' \in \Delta^{-,N=0,f\sim1}$, if
$\tilde\delta,\tilde\delta' \in \Delta^{f\sim1}$ denote arbitrary
lifts (respectively), one computes from the definitions of $a$ and $b$
that $\langle\delta,\delta'\rangle_{b\circ a} = \langle
N(\tilde\delta),\tilde\delta'\rangle_j$.  Because $N$ is adjoint to
$-N$ and $j$ is symplectic, it follows that
$\langle\cdot,\cdot\rangle_{b \circ a}$ is symmetric.  To see that $f$
is orthogonal, we note that $f\tilde\delta,f\tilde\delta'$ are lifts
of $f\delta,f\delta'$ (respectively), and compute that
\[
\langle N(f\tilde\delta),f\tilde\delta'\rangle_j
= p\langle fN(\tilde\delta),f\tilde\delta'\rangle_j
= pf\langle N(\tilde\delta),\tilde\delta'\rangle_j
= \langle N(\tilde\delta),\tilde\delta'\rangle_j.
\]
Since $\langle\cdot,\cdot\rangle_{b \circ a}$ is $I$-invariant by
construction, the desired pairing on $\Delta^{-,I,N=0,f\sim1}$ is
obtained by restriction.

We return to considering the desired formula \eqref{E:panchishkin
  epsilon}.  Each term appearing in it is left unchanged upon
replacing $\Delta$ by $\Delta^{f\text{-ss}}$.  For the left hand side,
this is \cite[(2.1.2.1)]{N3}.  For the term $(-1)^{h^0(D^-)}$, this
was just shown.  For the remaining terms, this is obvious.  So, we
assume always that $\Delta$ is Frobenius-semisimple.

Now $\Delta^{N\text{-ss}}$ is semisimple, so it splits into
$\Delta^{+,N\text{-ss}} \oplus \Delta^{-,N\text{-ss}} \cong
\Delta^{+,N\text{-ss}} \oplus (\Delta^{+,N\text{-ss}})^*(1)$, and the
argument of \cite[Proposition~2.2.1(4)]{N3} shows that
$\vep(\Delta^{N\text{-ss}}) = \det(\Delta^{+,N\text{-ss}})(-1)$.  We
compute that the right hand side of \eqref{E:panchishkin epsilon} is
\begin{align*}
&(-1)^{h^0(D^-)} (-1)^{d^-(D)} \det(D^+)(-1) \\
&\qquad= (-1)^{h^0(D^-)} \det(\Delta^+)(-1)
  &&\text{by \eqref{E:det + / det +}} \\
&\qquad= (-1)^{h^0(D^-)} \det(\Delta^{+,N\text{-ss}})(-1)
  &&\text{because rank one objects have $N=0$} \\
&\qquad= (-1)^{h^0(D^-)} \vep(\Delta^{N\text{-ss}})
  &&\text{by \cite[Proposition 2.2.1(4)]{N3}} \\
&\qquad= (-1)^{h^0(D^-)} \vep(\Delta)/\det(-f|\Delta^I/\Delta^{I,N=0})
  &&\text{by \cite[(2.1.2.3)]{N3}}.
\end{align*}
Combining this with \eqref{E:h^0(D^-)}, our desired identity
\eqref{E:panchishkin epsilon} is thus equivalent to the claim that
\begin{equation}\label{E:reduction1}
(-1)^{\dim_E \Delta^{-,I,N=0,f=1}}
= \det(-f|\Delta^I/\Delta^{I,N=0}).
\end{equation}
Recall that $I$ acts semisimply, and moreover any Weil--Deligne
representation $\Delta_0$ is functorially a direct sum $\Delta_0^I
\oplus \Delta_0'$, where $\Delta_0'$ is the component where $I$ acts
nontrivially.  After applying this decomposition to $\Delta_0 =
\Delta^?$, $?=\emptyset,\pm$, in showing \eqref{E:reduction1} we may
assume that inertia acts trivially.  Multiplying both sides of
\eqref{E:reduction1} through by $\det(-f|\Delta^{N=0})$, and using the
evenness of $d = \dim_E \Delta$ and \cite[(1.2.2)(2)]{N3} to compute
the resulting right hand side, we are left to show that
\begin{equation}\label{E:reduction2}
(-1)^{\dim_E\Delta^{-,N=0,f=1}} \det(-f|\Delta^{N=0}) = q^{-d/2}.
\end{equation}

If necessary, enlarge $E$ to include the eigenvalues of $f$ on
$\Delta$.  One can decompose $\Delta$ into a sum of subobjects
$\Delta_i$ of dimension $d_i$, pairwise orthogonal for the symplectic
pairing, each pair $(\Delta_i,j|_{\Delta_i})$ isomorphic to one of
\begin{itemize}
\item[(1)] $\unr(\alpha_i) \otimes \spp(d_i/2) \oplus
  \unr(q^{d_i/2-2}\alpha_i^{-1}) \otimes \spp(d_i/2)$ with $\alpha_i$
  a $q$-Weil number of weight $d_i/2-2$, equipped with the symplectic
  form $\langle (x,x'),(y,y')\rangle = x'(y)-y'(x)$,
\item[(2)] $\unr(\alpha_i) \otimes \spp(d_i)$ with $\alpha_i = \pm_i
  q^{d_i/2-1}$, equipped with its unique symplectic form (a twist of
  the one in \cite[Example~1.2.3]{N3}).
\end{itemize}
This follows from \cite[(1.2.4)]{N3}: Note that $\Delta$ has trivial
inertia action and is Frobenius-semisimple.  In the case (1) we may
assume that the term $X$ appearing in loc.\ cit.\ is of the form
$\unr(\alpha) \otimes \spp(m)$, with the condition on $\alpha_i$
coming from purity.  In the case (2) the term $\rho$ appearing in
loc.\ cit.\ is of the form $\unr(\alpha) \otimes \spp(m)$, with the
fact that $\alpha=\pm q^{m/2-1}$ being forced by the self-dual
condition.

In the case where $\Delta_i$ is of the form (1), one immediately
computes that $\Delta_i^{N=0} \cong \unr(q^{1-d_i/2}\alpha_i) \oplus
\unr(q^{1-d/2}q^{d_i/2-2}\alpha_i^{-1})$, so $\det(-f|\Delta_i^{N=0})
= q^{-d_i/2}$.  In the case where $\Delta_i$ is of the form (2), one
has $\Delta_i^{N=0} \cong \unr(q^{1-d}\alpha)$, so
$\det(-f|\Delta_i^{N=0}) = \mp_i q^{-d_i/2}$.  Thus,
\eqref{E:reduction2} is equivalent to the claim that $\dim_E
\Delta^{-,N=0,f=1}$ has the same parity as the number of factors of
the form (2) with $\pm_i = +$.

Functorially splitting $\Delta^?$, where $?=\emptyset,\pm$, in the
form $\Delta' \oplus \Delta''$, where $f$ has eigenvalues on $\Delta'$
(resp.\ $\Delta''$) belonging (resp.\ not belonging) to $q^\ZZ$, we
replace $\Delta^?$ with its first summand because the second makes no
contribution to this congruence.  Since each $\Delta_i$ of the form
(1) has two indecomposible summands, and each $\Delta_i$ of the form
(2) has one indecomposible summand and now has $\pm_i = +$, the claim
\eqref{E:reduction2} is equivalent to the claim that $\dim_E
\Delta^{-,N=0,f=1}$ has the same parity as the number of
indecomposible summands of $\Delta$.  To count the latter, purity
implies that each indecomposible summand $\Delta_0$ of $\Delta$ as
Weil--Deligne representation has $\Delta_0^{f=1}$ one-dimensional, so
that the number of indecomposible summands is equal to $\dim_E
\Delta^{f=1}$.  Thus we are to show that
\begin{equation}
\label{E:delta f=1 mod 2}
\dim_E \Delta^{-,N=0,f=1}
\equiv
\dim_E \Delta^{f=1}
\pmod2.
\end{equation}
Write $r_\pm = \dim_E \Delta^{\pm,f=1}$ so that the right hand side of
\eqref{E:delta f=1 mod 2} is $r_-+r_+$, and note that duality gives
$r_+ = \dim_E \Delta^{-,f=q^{-1}}$.  Purity implies that $N \cn
\Delta^{f=1} \to \Delta^{f=q^{-1}}$ is an isomorphism, so $N \cn
\Delta^{-,f=1} \to \Delta^{-,f=q^{-1}}$ is surjective.  It follows
that the left hand side of \eqref{E:delta f=1 mod 2} is $r_--r_+$.  As
$r_++r_- \equiv r_--r_+ \pmod2$, this completes the proof of the
proposition.
\end{proof}

\subsection{Rational functions on rigid analytic spaces}

In this preparatory subsection, we let $L$ be a complete
nonarchimedean field, and we let $X$ be a reduced rigid analytic space
over $L$.  We briefly present here the definition and basic properties
of the $L$-algebra $\kappa(X)$ of rational functions on $X$, for lack
of a suitable reference in the literature.

Throughout this paper, if $R$ is a commutative ring, $\Frac(R) =
T^{-1}R$ denotes \emph{the total fraction ring} of $R$, where $T
\subset R$ is the set of non-zerodivisors of $R$.

\begin{defn}
A \emph{rational function on $X$} is a rule $f$ that associates to
each affinoid subdomain $U = \Max(A)$ of $X$ an element $f_U \in
\Frac(A)$, in a manner compatible with restriction morphisms for
inclusions of affinoid subdomains.  We write $\kappa(X)$ for the
$L$-algebra of such $f$.
\end{defn}

One could equivalently define $\kappa(X)$ to be the categorical limit
of the $L$-algebras $\Frac(A)$ as $\Max(A)$ ranges over all affinoid
subdomains inside $X$.  Note that if $X = \Max(A)$ is affinoid then
$\kappa(X) = \Frac(A)$, because $X$ is the final affinoid subdomain of
$X$.

The following first results suffice for our purposes.

\begin{prop}\label{P:rational functions}
The $L$-algebra of rational functions has the following properties.

(1) The formation of $\kappa(X)$ is functorial for pullback under
dominant morphisms.

(2) If $U$ is a dense Zariski open in $X$, then the map $\kappa(X) \to
\kappa(U)$ is injective.

(3) Assigning to each affinoid subdomain $U = \Max(A)$ the $L$-algebra
$\Frac(A)$ defines a sheaf $\mathscr M_X$ on $X$ for the weak topology
on $X$ (generated by affinoid subdomains).  In particular, $\kappa(X)
= \Gamma(X, \mathscr M_X)$.

(4) If $\pi \cn \wt X \to X$ is a normalization, then $\pi_*(\scrM_\wt
X) = \scrM_X$ and $\kappa(X) =
\kappa(\wt X)$.

(5) If $\{X_i\}_{i \in I}$ is the set of irreducible components of
$X$, then $\kappa(X) = \prod_{i \in I} \kappa(X_i)$.

(6) If $X$ is irreducible, then $\kappa(X)$ is a field.
\end{prop}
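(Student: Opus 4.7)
The proof rests on the algebraic fact that for a reduced affinoid $L$-algebra $A$ with minimal primes $\mathfrak{p}_1,\ldots,\mathfrak{p}_n$, the non-zerodivisors are precisely $A\setminus\bigcup_k\mathfrak{p}_k$, so $\Frac(A)=\prod_{k=1}^n\Frac(A/\mathfrak{p}_k)$ with each factor a field. Granting (3), the remaining parts follow cleanly: for (1), the dominance of $\phi\colon Y\to X$ means that for compatible affinoids $\Max(B)\subset Y$ and $\Max(A)\subset X$ every minimal prime of $B$ contracts to a minimal prime of $A$, so $A\to B$ sends non-zerodivisors to non-zerodivisors and induces $\Frac(A)\to\Frac(B)$; for (2), a dense Zariski open of $\Max(A)$ corresponds to an ideal not contained in any minimal prime, hence containing a non-zerodivisor $a$, and $\Frac(A)\hookrightarrow\Frac(A[1/a])$ is injective; for (4), the normalization is finite birational, so on affinoids $A\hookrightarrow\widetilde{A}\subset\Frac(A)$ has the same total fraction ring.

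\textbf{Paragraph 2:} The principal obstacle is part (3), the sheaf property for $\mathscr{M}_X$ on the weak topology. For a finite admissible affinoid cover $\{U_i=\Max(A_i)\}$ of $U=\Max(A)$ and a compatible family $(f_i)\in\prod_i\Frac(A_i)$, the plan is to produce a non-zerodivisor $a\in A$ such that $af_i\in A_i$ for every $i$; Tate's acyclicity theorem then glues the sections $(af_i)$ to some $g\in A$, and $f:=g/a\in\Frac(A)$ restricts to each $f_i$. To construct $a$, I would first use the affinoid case of (4) to pass to the normalization $\widetilde{X}\to X$, on which every affinoid decomposes as a finite disjoint union of integral affinoids, reducing the problem to the sheaf property on an integral affinoid $R$. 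For integral $R$, the strategy is a coherent-sheaf-of-denominators approach: the ideal sheaf $\mathscr{J}$ defined on each cover piece $\Max(S_i)$ by $\mathscr{J}(\Max(S_i))=\{s\in S_i:sf_i\in S_i\}$ is coherent (each of these is a finitely generated ideal in the noetherian reduced $S_i$, containing the non-zerodivisor $s_i$ appearing as denominator of $f_i$), and compatibility on overlaps ensures the local ideals glue; one then shows $\mathscr{J}(\Max(R))$ is a nonzero ideal of the integral domain $R$ (using flatness of affinoid-subdomain inclusions, so that $R\setminus\{0\}$ sits among the non-zerodivisors of each $S_i$, together with the fact that compatibility at the generic point $\eta$ of $\Max(R)$ forces the $f_i$ to come from a single element of $K=\Frac(R)$). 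Any nonzero element of $\mathscr{J}(\Max(R))$ is the desired $a$. Making coherence precise and ensuring non-vanishing of $\mathscr{J}(\Max(R))$ is the technical heart of the proof.

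\textbf{Paragraph 3:} With (3) established, parts (5) and (6) and the global versions of (1), (2), (4) all fall out. For (5), the decomposition $X=\bigcup_iX_i$ into irreducible components is locally compatible with the decomposition $\Frac(A)=\prod_k\Frac(A/\mathfrak{p}_k)$ of the total fraction rings of affinoids, so $\mathscr{M}_X=\prod_i\iota_{i,*}\mathscr{M}_{X_i}$ and taking global sections gives $\kappa(X)=\prod_i\kappa(X_i)$. For (6), use (4) to replace $X$ by its normalization $\widetilde{X}$, which is irreducible and normal; affinoids $\Max(\widetilde{A})$ then decompose as disjoint unions $\coprod_j\Max(\widetilde{A}_j)$ with each $\widetilde{A}_j$ integral, so $\Frac(\widetilde{A})=\prod_j\Frac(\widetilde{A}_j)$ is a product of fields. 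For a nonzero $f\in\kappa(\widetilde{X})$, the observation that every nonempty Zariski open in an irreducible space is Zariski dense, combined with (2), forces $f$ to be nonzero on every component $\Max(\widetilde{A}_j)$ of every affinoid; hence $f$ is a unit in each $\Frac(\widetilde{A})$, and the local inverses glue via (3) to yield $f^{-1}\in\kappa(\widetilde{X})=\kappa(X)$.
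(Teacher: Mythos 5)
Your route for (3) is genuinely different from the paper's: the paper simply cites \cite[Theorem~4.6.6]{FvdP} (observing that the proof there extends from rational covers to the weak topology once one has Gerritzen--Grauert), whereas you reprove the sheaf property directly by the denominator-ideal method. That strategy is sound and is essentially the argument underlying the cited theorem, so it buys self-containedness; moreover the non-vanishing you defer is actually immediate once coherence is in place: by Kiehl's theorem the glued sheaf is the sheaf associated to the ideal $J$ of its global sections, and $J\cdot A_i$ recovers the local ideal of denominators, which contains the non-zerodivisor $s_i$, so $J\neq 0$ --- no ``generic point'' consideration is needed. Two concrete repairs are required, though. First, your reduction to the normalization silently assumes that normalization commutes with passage to affinoid subdomains, so that $\pi^{-1}(U_i)$ is the normalization of $U_i$ and $\Frac(A_i)=\Frac(\wt A\otimes_A A_i)$; this is a nontrivial excellence-based fact (cf.\ \cite{Con}), not free. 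It is cleaner to skip the reduction and run the argument over the reduced affinoid $A$ itself: coherence of the colon ideals $(s_iA_i: g_i)$ follows since colon ideals commute with the flat restriction maps, and one finds a non-zerodivisor in $J$ by prime avoidance, using going-down along the flat maps $A\to A_i$ to see that $J$ lies in no minimal prime $\gothp$ of $A$ (choose $i$ with $V(\gothp)\cap U_i\neq\emptyset$ and a minimal prime of $A_i$ over $\gothp$; then $s_i\in JA_i$ would lie in a minimal prime of the reduced ring $A_i$, a contradiction).

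The genuine gap is in (6). The justification ``every nonempty Zariski open in an irreducible space is Zariski dense, combined with (2)'' does not force $f$ to be nonzero on every component of every affinoid: nothing in (2) excludes the problematic scenario that $f$ is nonzero on some affinoid subdomains and identically zero on others, and excluding precisely this is where irreducibility must do real work. The paper supplies the missing mechanism by a chain argument: connect an affinoid where $f\neq 0$ to an arbitrary connected affinoid by a chain of connected affinoids with nonempty pairwise overlaps, and propagate nonvanishing through each overlap via the injectivity of the field maps $\Frac(A_i)\to\Frac(B_i)$ into a connected affinoid inside the intersection. Alternatively, in the spirit of your own toolkit, the vanishing locus of $f$ is cut out by the coherent annihilator ideal sheaf, hence is an analytic subset; if it contained a nonempty affinoid it would be Zariski-dense --- but this uses Zariski-density of nonempty \emph{admissible} opens in an irreducible space, not merely of Zariski opens --- and hence all of $X$. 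As written, your appeal to (2) does not supply either mechanism. (Minor quibbles, harmless since the paper declares (1)--(2) obvious: in (2), $A[1/a]$ is not the ring of an affinoid subdomain, so argue instead with the Laurent domains $\{|a|\geq\epsilon\}$ inside $U$, whose union is Zariski-dense; and in (1), ``dominant'' must be read componentwise, since a map with Zariski-dense image need not contract minimal primes of $B$ to minimal primes of $A$.)
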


\begin{proof}
The claims (1) and (2) are obvious.

The claim (3) is essentially \cite[Theorem~4.6.6]{FvdP}; the theorem
therein was stated for the covers by rational subdomains because the
Gerritzen--Grauert theorem only appeared later, but the proof works
verbatim for the weak topology.

When $X$ is affinoid it is obvious that $\kappa(X) = \kappa(\wt X)$.
Passing to the construction of (3), this gives $\pi_*(\scrM_\wt X) =
\scrM_X$ in general, whence taking global sections of the latter
identity yields $\kappa(X) = \kappa(\wt X)$ in general.  This shows
(4).

We treat (5).  The irreducible components $X_i$ of $X$ are defined to
be the images of the connected components $\wt X_i$ of $\wt X$.
Moreover, each $\wt X_i \to X_i$ is a normalization.  Thus (4) reduces
us to the case where $X$ is normal, and $\{X_i\}_{i \in I}$ is none
other than the set of connected components of $X$, in which case the
claim is obvious.

In the setting of (6), we immediately reduce to the case where $X$ is
normal.  Then each affinoid subdomain $U = \Max(A)$ of $X$ is a finite
disjoint union of connected affinoid subdomains $U_i = \Max(A_i)$, and
each $\Frac(A_i)$ is a field.  Thus we are reduced to showing that if
$f \in \kappa(X)$ is not the zero element then, for each connected
affinoid subdomain $U$ of $X$, the element $f_U \in \Frac(U)$ is
nonzero.  By definition, the fact that $X$ is connected means that for
any two points $P,Q \in X$, there exists a chain of connected affinoid
subdomains $U_1,\ldots,U_N$ of $X$ with $P \in U_1$, $Q \in U_N$, and
$U_i \cap U_{i+1}$ nonempty.  Given nonzero $f$, choose $V$ such that
$f_V$ is nonzero, and arbitrarily choose $P \in V$.  Given any $U$,
choose arbitrary $Q \in U$, and connect $P,Q$ with a chain
$U_1,\ldots,U_N$ as in the definition of connectedness.  Insert $V$ at
the beginning of the chain, and $U$ at the end of the chain, so that
now $U_1 = V$ and $U_N = U$.  It suffices to see that if $f_{U_i}$ is
nonzero then so is $f_{U_{i+1}}$.  Indeed, choose any connected
affinoid subdomain $W_i$ contained in $U_i \cap U_{i+1}$, write $U_i =
\Max(A_i)$ and $W_i = \Max(B_i)$, and note that the maps
\[
p_i \cn \Frac(A_i) \to \Frac(B_i),
\qquad
p'_i \cn \Frac(A_{i+1}) \to \Frac(B_i)
\]
are homomorphisms of fields, hence injective, and $p_i(f_{U_i}) =
f_{W_i} = p'_i(f_{U_{i+1}})$.
\end{proof}

\begin{caution}
If $U$ is a dense Zariski open in $X$, the natural map $\kappa(X) \to
\kappa(U)$ is not in general surjective.  For example, when $X =
\Max(\Qp\langle z\rangle)$ and $U = X \backslash \{0\}$, the function
$f(z) = \prod_{n \geq 1} \big( 1+ \frac{p^n}{z-p^n}\big)$ belongs to
$\kappa(U)$ but not to $\kappa(X)$.
\end{caution}

Assume the characteristic of $L$ is not two, and write $\{X_i\}_{i \in
  I}$ for the set of irreducible components of $X$.  We say that $f
\in \kappa(X)$ \emph{has values in $\{\pm1\}$} if it belongs to the
subgroup $\prod_{i \in I} \{\pm1\}$ of $\prod_{i \in I}
\kappa(X_i)^\times = \kappa(X)^\times$.

\subsection{Local arguments: $\ell \neq p$}

Here we let $K$ be as in the start of this section, and we assume that
the characteristic $\ell$ of $k$ is not equal to $p$.

\begin{prop}\label{P:WD in families}
Let $X$ be a reduced rigid analytic space over $\Qp$, and let $\scrT$
be a locally free coherent $\calO_X$-module equipped with a
continuous, $\calO_X$-linear action of $G_K$.  Then the usual recipe
(see, for example, \cite[(5.1.4)]{N3}) associates to $\scrT$ a
Weil--Deligne representation structure $WD(\scrT)$ on $\scrT$, in a
manner that is functorial in $\scrT$ and commutes with arbitrary base
change in $X$.
\end{prop}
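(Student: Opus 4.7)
The plan is to reduce to the familiar construction for a single $p$-adic representation, then execute it uniformly over an affinoid base. Since the formation is local on $X$ and the recipe is manifestly functorial once built, I reduce to the case where $X = \Max(A)$ is a reduced affinoid and $\scrT$ is free, so that the action is encoded in a continuous homomorphism $\rho \cn G_K \to \mathrm{GL}_n(A)$ with $n = \rank_A \scrT$. By compactness of $G_K$ and continuity of $\rho$, the image preserves an $A^\circ$-lattice in $\scrT$, hence sits inside the $p$-adic analytic group $\mathrm{GL}_n(A^\circ)$, which has an open pro-$p$ congruence subgroup. Because wild inertia is pro-$\ell$ with $\ell \neq p$, its image maps trivially into this pro-$p$ quotient, so it acts through a finite quotient; the analogous observation handles the prime-to-$p$ factors of tame inertia. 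Replacing $I_K$ by a suitable open normal subgroup $I_1 \subseteq I_K$ of finite index, I may assume that $I_1$ acts through its pro-$p$ tame quotient, identified with $\Zp(1)$ via the $\ell$-adic tame character $t_\ell$.

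Next I would fix a topological generator $\tau$ of $I_1$ modulo the kernel of $\rho|_{I_1}$ and any lift of Frobenius $\phi \in G_K$. The tame relation $\phi \tau \phi^{-1} = \tau^q$ gives $\rho(\phi)\rho(\tau)\rho(\phi)^{-1} = \rho(\tau)^q$ in $\mathrm{GL}_n(A)$, so $\rho(\tau)$ and $\rho(\tau)^q$ share a characteristic polynomial in $A[T]$. Evaluating at any closed point $x \in \Max(A)$, the eigenvalue multi-set of $\rho(\tau)(x)$ in $\ov{\kappa(x)}$ is stable under $\lambda \mapsto \lambda^q$, so this self-map permutes an $n$-element multi-set, forcing $\lambda^{q^{n!}} = \lambda$ for every eigenvalue. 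Setting $M := \rho(\tau)^{q^{n!}-1}$, at every closed point $M$ has all eigenvalues equal to $1$, i.e.\ characteristic polynomial $(T-1)^n$. Since $A$ is reduced and Jacobson, this polynomial identity holds in $A[T]$, and Cayley--Hamilton yields $(M-1)^n = 0$ in $\mathrm{End}_A(\scrT)$; thus $M$ is uniformly unipotent over $X$.

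With this uniform unipotence in hand, the logarithm $\log M = \sum_{k \geq 1}(-1)^{k+1}(M-1)^k/k$ is a finite sum (terminating because $(M-1)^n = 0$) and defines a nilpotent element of $\mathrm{End}_A(\scrT)$. Up to the standard normalization by $t_\ell(\tau)$ and division by $q^{n!}-1$, this provides the monodromy operator $N$, which together with $\rho(\phi)$ and the finite-image action of $I_K/I_1$ produces the Weil--Deligne representation $WD(\scrT)$ via Nekov\'a\v{r}'s recipe \cite[(5.1.4)]{N3}. Functoriality in $\scrT$ is transparent from the construction, and compatibility with arbitrary base change $A \to B$ holds because every ingredient (the integer $q^{n!}-1$, the subgroup $I_1$, the chosen lattice, and the finite sum defining $N$) is manifestly base-change compatible.

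The principal difficulty lies in globalizing pointwise quasi-unipotence to a single nilpotent $N$ over $A$. Grothendieck's classical argument gives quasi-unipotence fiberwise immediately, but turning this into the identity $(M-1)^n = 0$ in $\mathrm{End}_A(\scrT)$ requires two ingredients that would otherwise fail: first, the uniform exponent $q^{n!}-1$ coming from the permutation-group bound, which makes a single $M$ work on every fiber simultaneously; and second, the reducedness of $X$, which allows the coefficient-wise equality in $A[T]$ to be detected from closed-point evaluations.
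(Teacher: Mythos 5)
Your argument is correct in substance, and at the decisive point it is the same mechanism as the paper's: everything hinges on the wild inertia $I_K'$ (and the prime-to-$p$ part of tame inertia) acting through a finite quotient, which both you and the paper extract from a $G_K$-stable integral lattice together with the congruence filtration $1+p^kM_n$, whose graded pieces are killed by $p$ and hence admit no nontrivial continuous homomorphism from a pro-$\ell$ group. The differences are worth recording. The paper gets the lattice from Chenevier's Lemme 3.18 (a finite flat lattice over a unit-ball subalgebra, made free by adding a trivial summand) and then defers the rest to the usual recipe; you instead invoke compactness to produce an $A^\circ$-lattice directly, which is fine, except that the lattice $L=\sum_g \rho(g)(A^\circ)^n$ need not be free over $A^\circ$, so ``hence sits inside $\mathrm{GL}_n(A^\circ)$'' should be replaced by working with $\Aut_{A^\circ}(L)$ and the filtration $1+p^k\End_{A^\circ}(L)$ (or by adding a free complement, as the paper does); relatedly, $1+pM_n(A^\circ)$ is not pro-$p$ in the profinite sense when $A^\circ/p$ is infinite, though the separated filtration with $p$-torsion graded pieces is all your argument actually uses. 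Your second half---globalizing Grothendieck's quasi-unipotence over $A$ via the eigenvalue-permutation bound at closed points, reducedness, and Cayley--Hamilton---is correct and is genuinely more than the paper writes down, since the paper simply cites the recipe of \cite[(5.1.4)]{N3} once wild inertia is handled; but note that reducedness is not in fact needed for this step: once $N=\log$ of the unipotent part is defined, the relation $\rho(\phi)N\rho(\phi)^{-1}=qN$ forces its characteristic polynomial to be $T^n$ because $1-q^i$ is invertible in any $\Qp$-algebra, and nilpotence follows. Finally, the relevant tame character here is the $p$-adic one $t_p$ (identifying the pro-$p$ quotient of tame inertia with $\Zp(1)$), not $t_\ell$; with these small repairs your proof goes through and, like the paper's, yields functoriality and base-change compatibility because the construction is given by universal formulas in $\rho(\tau)$ and $\rho(\phi)$.
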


\begin{proof}
By gluing it suffices to treat the case where $X = \Max(A)$ is
affinoid, and in this case it suffices to know the claim that the wild
inertia subgroup $I_K' \subset G_K$ acts on $\scrT$ through a finite
quotient.  To see this, \cite[Lemme~3.18]{Ch} shows that there exists
a unit ball subalgebra $A_0 \subset A$ and a finite, flat,
$G_K$-stable $A_0$-lattice $\scrT_0 \subset \scrT$.  By replacing
$\scrT_0$ by its direct sum with another finite flat $A_0$-module with
trivial $G_K$-action, we may assume it is free, say of rank $d$.  A
cofinal system of open subgroups of $\Aut_{A_0}(\scrT_0)$ is given by
$1+p^n\End_{A_0}(\scrT_0) \approx 1+p^nM_d(A_0)$, where $d$ is the
rank of $\scrT_0$.  Note that $(1+p^nM_d(A_0))/(1+p^{n+1}M_d(A_0))
\cong M_d(A_0/p)$ for $n \gg 0$, where the right hand side is
annihilated by $p$.  Since $I_K'$ is a pro-$\ell$ group, this implies
that the order of its image in $\Aut_{A_0}(\scrT_0/p^n)$ stabililizes
for $n \gg 0$, and therefore its image in $\Aut_{A_0}(\scrT_0)$ is
finite, as was desired.
\end{proof}

\begin{notation}
Define a \emph{local datum} $\scrX = (X,\scrT,j)$ \emph{(for $K$)} to
consist of:
\begin{itemize}
\item a reduced rigid analytic space $X$ over $\Qp$,
\item a locally free coherent $\calO_X$-module of rank $d$ equipped
  with a continuous, linear $\calO_X$-action of $G_K$, and
\item a skew-symmetric, $\calO_X[G_K]$-linear morphism
  $\langle\cdot,\cdot\rangle_j \cn \scrT \otimes_{\calO_X} \scrT \to
  \calO_X(1)$ whose adjoint map $j \cn \scrT \to \scrT^*(1)$ is
  generically on $X$ an isomorphism (so $d$ is even).
\end{itemize}
If $\scrX$ is a local datum as above and $f \cn Y \to X$ is a morphism
of reduced rigid analytic spaces, then the tuple $f^*\scrX =
(Y,f^*\scrT,f^*j)$ is again a local datum if and only if $f^*j$ is
generically on $Y$ an isomorphism.
\end{notation}

We will associate to a local datum two subsets $X_\alg \subseteq
X_\pair \subseteq X$.  First, $X_\pair$ is the set of points $P \in X$
for which there exists $u \in \Frac(\calO_{X,P}^\wedge)^\times$ such
that $uj_P^\wedge \cn \scrT_P^\wedge \to \scrT^*(1)_P^\wedge$ is an
isomorphism of $\calO_{X,P}^\wedge$-modules.  Equivalently, by
Nakayama's lemma, $uj_P^\wedge$ sends $\scrT_P^\wedge$ into
$\scrT^*(1)_P^\wedge$ and $\overline{uj} = uj_P^\wedge
\otimes_{\calO_{X,P}^\wedge} \kappa(P) \cn \scrT \otimes_{\calO_X}
\kappa(P) \to \scrT^*(1) \otimes_{\calO_X} \kappa(P)$ is an
isomorphism.  We say that such $u$ \emph{guarantees} $P \in X_\pair$.
For $u \in \kappa(X)^\times$, we write $X_\pair^u$ for the set of $P
\in X$ such that the image of $u$ in
$\Frac(\calO_{X,P}^\wedge)^\times$ guarantees $P \in X_\pair$.  If $f
\cn Y \to X$ is a morphism such that $f^*\scrX$ is again a local
datum then one has $f^{-1}(X_\pair) \subseteq Y_\pair$, and if
moreover $f^*(u)$ is well-defined then $f^{-1}(X_\pair^u) \subseteq
Y_\pair^{f^*u}$.

\begin{prop}\label{P:u over affinoid}
Let $\scrX$ be a local datum.

(1) If $\iota \cn Y \subseteq X$ is an admissible open subset, then
$\iota^*\scrX$ is a local datum and satisfies $Y_\pair = Y \cap
X_\pair$.

(2) If $X = \Max(A)$ is affinoid, then for each $P \in X_\pair$ there
exists $u \in \Frac(A)^\times$ such that $P \in X_\pair^u$.

(3) For each $u \in \kappa(X)^\times$ the subset $X_\pair^u \subseteq
X$ is a dense Zariski open subspace.  In particular, $X_\pair$ is also a dense
Zariski open subspace.
\end{prop}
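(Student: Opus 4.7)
For (1), an admissible open inclusion $\iota \cn Y \hookrightarrow X$ induces isomorphisms $\calO_{X,P}^\wedge \cong \calO_{Y,P}^\wedge$ at every $P \in Y$, so the condition for $P \in X_\pair$ transfers unchanged and yields $Y_\pair = Y \cap X_\pair$; also $\iota^*j$ remains generically an isomorphism because the isomorphism locus of $j$ is Zariski dense in $X$ and so meets each irreducible component of $Y$ densely. The bulk of the work concerns (2), with (3) an essentially formal verification of the algebraic-geometric content of the defining conditions.

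For (3), fix $u \in \kappa(X)^\times$ and locally trivialize $\scrT, \scrT^*(1)$, so that $j$ is represented by a matrix $M$. The condition $P \in X_\pair^u$ splits into (i) every entry of $uM$ is regular at $P$, and (ii) $\det(uM)$ is nonzero at $P$; each cuts out the complement of a Zariski closed subspace (the pole locus of $uM$, and the vanishing locus of $\det(uM)$, respectively), so $X_\pair^u$ is Zariski open. For density, on each irreducible component $X_i$ the rational function $u|_{X_i}$ is regular and nonzero on a Zariski dense open of $X_i$, whose intersection with the (also dense open) isomorphism locus of $j$ lies in $X_\pair^u$.

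For (2), the plan is to recognize $P \in X_\pair$ as a principal-ideal condition on the entries of $j$ in $\calO_{X,P}^\wedge$, then use density of $A$ there to exhibit a generator defined over $A$. Shrink $X$ admissibly around $P$ (harmless by (1)) so that $\scrT, \scrT^*(1)$ become free and $j$ is a matrix $M = (m_{ij}) \in M_d(A)$. A witness $u_P$ to $P \in X_\pair$ yields $u_P \mathfrak{a} = \calO_{X,P}^\wedge$, where $\mathfrak{a}$ is the ideal of $\calO_{X,P}^\wedge$ generated by the $m_{ij}$; thus $\mathfrak{a}$ is invertible and hence principal (since $\calO_{X,P}^\wedge$ is local), say $\mathfrak{a} = (v_P)$ with $v_P := u_P^{-1} \in \mathfrak{a}$ a non-zerodivisor, and moreover $\det(M) = v_P^d \cdot (\text{unit})$. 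Write $v_P = \sum_{i,j} \lambda_{ij} m_{ij}$ for some $\lambda_{ij} \in \calO_{X,P}^\wedge$; using $\gothm_P$-adic density of $A$ in $\calO_{X,P}^\wedge$, approximate $\lambda_{ij}$ by $\lambda'_{ij} \in A$ with $\lambda'_{ij} - \lambda_{ij} \in \gothm_P^n$ for $n$ large, and set $v := \sum_{i,j} \lambda'_{ij} m_{ij} \in A$. Then $v - v_P \in \gothm_P^n \mathfrak{a} = v_P \gothm_P^n$, so $v/v_P \in 1 + \gothm_P$; in particular $v$ also generates $\mathfrak{a}$ in $\calO_{X,P}^\wedge$ and $\det(M)/v^d$ is still a unit there.

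The main obstacle, as I expect, is ensuring $v$ is a non-zerodivisor in $A$ itself---a priori $v$ avoids only the minimal primes of $A$ contained in $\gothm_P$, but may vanish identically on some irreducible component of $X$ not passing through $P$. To handle this I would exploit that the kernel of $A \to \calO_{X,P}^\wedge$ equals $I_P := \bigcap_{\gothp \subseteq \gothm_P} \gothp$, so that replacing $v$ by $v + h$ with $h \in I_P$ does not alter its image in the completion. No minimal prime $\mathfrak{q}$ of $A$ with $\mathfrak{q} \not\subseteq \gothm_P$ can contain $I_P$ (else some minimal $\gothp \subseteq \gothm_P$ would satisfy $\gothp \subseteq \mathfrak{q}$, forcing $\gothp = \mathfrak{q}$ by minimality and contradicting $\mathfrak{q} \not\subseteq \gothm_P$), so a prime-avoidance argument yields $h \in I_P$ with $v + h \notin \mathfrak{q}$ for every such $\mathfrak{q}$. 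Then $u := 1/(v + h) \in \Frac(A)^\times$ and $P \in X_\pair^u$, as desired.
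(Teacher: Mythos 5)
Your parts (1) and (3) are correct and at roughly the same level of detail as the paper's own argument (openness of $X_\pair^u$ from the support of a coherent quotient plus Nakayama, density from the locus where $u$ is a unit intersected with the locus where $j$ is an isomorphism); you do not, however, address the last sentence of (3), that $X_\pair$ itself is dense Zariski open, which is deduced from (2) together with the first part of (3).

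The genuine gap is the opening move of your proof of (2): you ``shrink $X$ admissibly around $P$ (harmless by (1))'' so that $\scrT$ becomes free and $j$ is a matrix over $A$. This replaces $A$ by the coordinate ring $A'$ of a small affinoid neighborhood of $P$, and everything you then construct --- $v$, $h$, and finally $u=1/(v+h)$ --- lives in $A'$ and $\Frac(A')$, not in $\Frac(A)$; there is no map $\Frac(A')\to\Frac(A)$, and part (1) gives no mechanism for descending a witness from a subdomain back to $X$ (it only says restriction preserves the notion of local datum and that $Y_\pair=Y\cap X_\pair$). Since a witness over a small neighborhood of $P$ exists essentially by definition of $X_\pair$, the whole content of (2) is that $u$ can be taken meromorphic on all of $\Max(A)$, so as written you have proved only a local statement. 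Fortunately the rest of your argument --- $\mathfrak{a}$ principal with generator $v_P=u_P^{-1}$, approximation of $v_P$ modulo $\gothm_P^n$ by an element of $A$, and the correction by $h\in\ker(A\to\calO_{X,P}^\wedge)$ chosen by coset prime avoidance so that $v+h$ is a non-zerodivisor of $A$ --- is sound and never really needs $\scrT$ to be free over $A$: replace the entries $m_{ij}$ by finitely many $A$-generators $b_1,\dots,b_m$ of the coherent image ideal $\mathfrak{b}=\img\bigl(\scrT\otimes_{\calO_X}\scrT\xrightarrow{\langle\cdot,\cdot\rangle_j}\calO_X(1)\cong\calO_X\bigr)\subseteq A$; since $A\to\calO_{X,P}^\wedge$ is flat one has $\mathfrak{b}\,\calO_{X,P}^\wedge=\mathfrak{a}$, and your argument goes through verbatim with $v=\sum_k\lambda'_k b_k\in A$, the invertibility at $P$ being checked on a local basis exactly as you do. With that repair your route is genuinely different from the paper's, which first reduces to $A$ normal (so non-zerodivisors are just nonzero elements on a component) and then approximates the numerator and denominator of a local witness, first modulo a high power of $\gothm_P$ and then in the spectral norm using the density of $A$ in the ring of a small affinoid subdomain; your ideal-theoretic argument avoids both the normalization and the norm approximation.
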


\begin{proof}
Claim (1) is clear from the definitions of a local datum and
$X_\pair$.

For (2), since $\Frac(A)$ depends only on the normalization of $A$, it
suffices to assume $A$ is normal, and therefore also irreducible.
Choose $u_1 \in \Frac(\calO_{X,P}^\wedge)^\times$ guaranteeing $P \in
X_\perf$, and write $u_1 = s_1^{-1}f_1$ with $f_1,s_1 \in
\calO_{X,P}^\wedge$ and $s_1$ a non-zerodivisor.  By the definition of
$\calO_{X,P}$, we may find an affinoid subdomain $\Max(A') \subseteq
\Max(A)$ containing $P$ and $f_2,s_2 \in A$ such that $f_2 \equiv f_1$
and $s_2 \equiv s_1$ modulo a high power of the maximal ideal at $P$.
We may take $\Max(A')$ to be connected, and therefore also irreducible
because $A$ is normal.  It follows that $s_2$ is a non-zerodivisor.
But the morphism $A \to A'$ is injective with dense image for the
spectral norm $|\cdot|_{A'}$ on $A'$, so we may choose $f_3,s_3 \in A$
with $|f_3-f_2|_{A'} < |f_2|_{A'}$ and $|s_3^{\pm1}-s_2^{\pm1}|_{A'} <
|s_2^{\pm1}|_{A'}$.  It is easy to check that $u = s_3^{-1}f_3 \in
\Frac(A)$ satisfies $P \in X_\perf^u$.

Consider (3).  We may assume $X = \Max(A)$ is affinoid.  By Nakayama's
lemma, the conditions defining membership $P \in X_\pair^u$ in terms
of $uj$ are Zariski open.  On the other hand, writing $u = s^{-1}f$
with $f,s \in A$ and $s$ a non-zerodivisor, away from the union of the
vanishing loci of $f,s$ one has $X_\perf^u = X_\perf^1$, and by
definition of a local datum the right hand side is dense in $X$.  The
final claim also follows, because part (2) shows that $X_\pair =
\bigcup_{u \in \Frac(A)^\times} X_\pair^u$.
\end{proof}

Given a datum $\scrX$ as above with $X$ irreducible, it follows from
the preceding proposition that the Weil--Deligne representation
$WD(\scrT) \otimes_{\calO_X} \kappa(X)$ is symplectic self-dual, and
then \cite[Proposition~2.2.1(2)]{N3} gives $\vep(WD(\scrT)
\otimes_{\calO_X} \kappa(X)) \in \{\pm1\}$.  If we do not assume $X$
to be irreducible, working one irreducible component at a time gives
an element of $\kappa(X)^\times$ with values in $\{\pm1\}$.
Similarly, if $P \in X_\pair^u$ then $\overline{uj}$ makes $WD(\scrT)
\otimes_{\calO_X} \kappa(P)$ symplectic self-dual, so that
$\vep(WD(\scrT) \otimes_{\calO_X} \kappa(P)) \in \{\pm1\}$.

Finally, we define $X_\alg$ to be $X_\pair$ if $\scrT$ is unramified,
and otherwise to be the set of points $P \in X_\pair$ such that $\scrT
\otimes_{\calO_X} \kappa(P)$ is pure of weight $-1$.

\begin{prop}\label{P:local eps locally constant}
Suppose $\scrX$ is a local datum with $X$ irreducible and $P \in
X_\alg$.  Then one has $\vep(\scrT \otimes_{\calO_X} \kappa(P)) =
\vep(\scrT \otimes_{\calO_X} \kappa(X))$.  Moreover, $\scrT
\otimes_{\calO_X} \kappa(P)$ is ramified if and only if $\scrT$ is.
\end{prop}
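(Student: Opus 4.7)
The second (ramification) assertion reduces to an analysis of the inertia action. By the proof of Proposition~\ref{P:WD in families}, the action of $I_K$ on $\scrT$ factors through a finite quotient $Q$. For each $q \in Q$, the trace $\Tr(q \mid \scrT) \in \calO_X(X)$ is a global analytic function whose value at any $P \in X$ is a sum of at most $d = \rank_{\calO_X}\scrT$ many $|Q|$-th roots of unity in $\overline{L}$. After enlarging $L$ so that all such roots of unity lie in $L$, this function takes values in a finite subset of $L$; as $X$ is reduced and connected (being irreducible), it is therefore constant on $X$. Since characters determine representations of a finite group in characteristic zero, the isomorphism class of $\scrT|_{I_K}$ as a $Q$-representation is the same at every specialization, and in particular $\scrT \otimes_{\calO_X} \kappa(P)$ is ramified iff $\scrT \otimes_{\calO_X} \kappa(X)$ is.

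For the $\vep$-factor claim, the plan is to show that the $\{\pm1\}$-valued function $P \mapsto \vep(WD(\scrT|_P))$ is locally constant on $X_\pair$; since $X$ is irreducible hence connected, this will imply it is constant, matching the value at the generic point $\kappa(X)$. I work on an affinoid neighborhood $U$ of $P$ contained in $X_\pair^u$ for an appropriate $u \in \kappa(X)^\times$, available by Proposition~\ref{P:u over affinoid}, so that $\scrT|_U$ together with the modified pairing $uj$ is genuinely symplectic self-dual over $U$. Using the constancy of the inertia representation established in the previous paragraph, I decompose $\scrT|_U$ into $Q$-isotypic components, each a locally free $\calO_U$-module of constant rank, with the symplectic pairing compatible with the decomposition (pairing the $\sigma$-component with the $\sigma^\vee$-component) and with Frobenius $f$ and monodromy $N$ respecting the decomposition up to prescribed twists by the tame character.

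Multiplicativity of $\vep$ under direct sums then reduces the claim to each isotypic piece. The inertia-nontrivial pieces contribute $\vep$-factors whose dependence on $\scrT$ reduces to analytically varying Frobenius determinants constrained to $\{\pm1\}$ by self-duality, hence locally constant. The main obstacle is the inertia-trivial (unramified) piece, where both the Frobenius eigenvalues and the rank of the monodromy $N$ may vary in the family. Here I would apply the identity $\vep(\Delta) = \vep(\Delta^{F\textrm{-ss}})$ of \cite[(2.1.2.3)]{N3} together with \cite[Proposition~2.2.1(4)]{N3} to rewrite $\vep$ as $\det(\Delta^+)(-1) \cdot \det(-f \mid \Delta^I / \Delta^{I,N=0})^{-1}$ for any $(f,N)$-stable Lagrangian $\Delta^+ \subset \Delta^{F\textrm{-ss}}$; the first factor is independent of the choice of such a Lagrangian (as it must be for the formula to be well-defined) and depends locally constantly on the family. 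The key technical challenge is controlling the correction factor $\det(-f \mid \Delta^I/\Delta^{I,N=0})^{-1}$ across a possibly non-constant rank of $N$: purity of $\Delta_P$ at $P \in X_\alg$ pins down the rank of $N_P$ and the Frobenius-eigenvalue structure there, and combined with the symplectic self-duality over $U$ and the analytic variation of the Frobenius characteristic polynomial, this forces the locally constant behavior of the correction factor. The counting involved parallels the dimension-parity argument used in the proof of Proposition~\ref{P:panchishkin epsilon}, here in the simpler $\ell \neq p$ setting without any $p$-adic Hodge-theoretic complications.
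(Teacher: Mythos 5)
There is a genuine gap, in fact two. First, your treatment of the ramification claim rests on a false premise: the proof of Proposition~\ref{P:WD in families} only shows that the \emph{wild} inertia subgroup $I_K'$ acts on $\scrT$ through a finite quotient. The full inertia group $I_K$ does not act through a finite quotient in general --- by Grothendieck's monodromy theorem its tame pro-$p$ part acts through $\exp$ of the monodromy operator $N$, and this is exactly the part that matters. Even if you replace ``the action of $I_K$ on $\scrT$'' by its action on $WD(\scrT)$ (where it genuinely factors through a finite quotient, and your trace argument does show that this finite inertia representation is constant in the family), you have only controlled the inertia part of ``unramified''. A Weil--Deligne representation is unramified only if in addition $N=0$, and the real content of the second assertion is that $N$ does not degenerate at $P$: a priori $N$ could be nonzero generically but vanish (or drop rank) at the closed point. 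This is where purity at $P\in X_\alg$ and a flatness argument enter; in the paper this is handled by blowing up along Fitting ideals so that the $\img(N^r\mid\scrT)$ become projective and then proving that $\img(N^r\mid\scrT)\otimes_A\kappa(P)\to\img(N^r\mid V)$ is an isomorphism.

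The same issue is the missing heart of your argument for the $\vep$-factor. Your opening plan --- that $P\mapsto\vep(WD(\scrT|_P))$ is locally constant on all of $X_\pair$ --- is not correct as stated: at non-pure points the monodromy can degenerate and the local $\vep$-factor can jump, which is precisely why the proposition is restricted to $P\in X_\alg$. When you do invoke purity, it is only to \emph{assert} that ``this forces the locally constant behavior of the correction factor $\det(-f\mid\Delta^I/\Delta^{I,N=0})$''; but that assertion is the entire difficulty, and no mechanism is given for it. What is actually needed (and what the paper spends most of its proof establishing) is: (i) after suitable blowups and shrinking, $\img(N^r\mid\scrT)$ is projective and specializes isomorphically at $P$; (ii) using purity of $V$, the weight decomposition $V=\oplus_i V_i$ lifts to a decomposition $\scrT=\oplus_i\scrT_i$ via a coprime factorization of the Frobenius characteristic polynomial, with $N^i\cn\scrT_i\to\scrT_{-i}$ isomorphisms near $P$, so that $\rank(N^r\mid\scrV)=\rank(N^r\mid V)$ for all $r$; and (iii) consequently $\scrT^{N\text{-ss}}\otimes_A\kappa(P)\cong V^{N\text{-ss}}$ and $\scrT_g/\scrT_f\otimes_A\kappa(P)\cong V_g/V_f$, after which the arguments of \cite[Proposition~2.2.4]{N3} compare the two factors in $\vep=\vep(\cdot^{N\text{-ss}})\det(-f\mid\cdot_g/\cdot_f)$. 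Without steps (i)--(iii), your reduction via \cite[(2.1.2.3)]{N3} and \cite[Proposition~2.2.1(4)]{N3} (which, note, requires an $(f,N)$-stable Lagrangian splitting that only exists after semisimplification) does not close, so the proposal as written does not prove either assertion.
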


\begin{proof}
The first claim is trivial if $\scrT$ is unramified, because then both
sides of the desired identity are $1$, so we assume that $\scrT$ is
ramified.

Write, for brevity, $\scrV = \scrT \otimes_{\calO_X} \kappa(X)$ and $V
= \scrT \otimes_{\calO_X} \kappa(P)$.  In this proof we will have no
need for the underlying $G_K$-representation structures on
$\scrT,\scrV,V$, so we use the latter symbols to denote these objects
only \emph{equipped with their Weil--Deligne representation
  structures}.  I.e., we identify $WD(\scrT) = \scrT$, $WD(\scrV) =
\scrV$, and $WD(V) = V$.

We begin with some reductions using implicitly, and repeatedly, the
fact that formation of $\vep$-factors commutes with extension of
coefficient fields.  First, we may assume that $X$ is normal, and it
suffices to replace $X$ by an arbitrary small connected affinoid
subdomain $\Max(A)$ containing $P$.  By Proposition~\ref{P:u over
  affinoid}(2) we may choose $u$ such that $P \in X_\pair^u$, and by
Proposition~\ref{P:u over affinoid}(3) by replacing $j$ by $uj$ and
shrinking $X$ we may assume $j$ is an isomorphism.  Further shrinking
$X$, we may assume $\scrT$ is free.  Then blowing up $X$ along the
$(\rank_A \coker(N^r \mid \scrT))$th Fitting ideals of each of the
$\coker(N^r \mid \scrT)$ (see, for example, \cite[\S6.3]{KPX}), and
again replacing $X$ by a small neighborhood of a preimage of $P$, we
may assume that each of the $\img(N^r \mid \scrT)$ are flat and
moreover free.

Letting $S$ be a free polynomial variable, we may replace $A$ by a
finite integral extension over which the characteristic polynomial
$Q(S) = \det(S-f \mid \scrT)$ splits, and $P$ with any maximal ideal
of this extension lying over it.  In particular, the reduction $\bar
Q(S) = \det(S-f \mid V)$ splits over $\kappa(P)$.  If we write $V_i$
for the maximal $f$-stable subspace whose eigenvalues are strictly
pure of weight $i-1$, then the $\bar Q_i(S) = \det(S-f \mid V_i)$ for different $i \in \ZZ$ are
coprime in $\kappa(P)[S]$.  Because $V$ is pure of weight $-1$ we have
$V = \oplus_{i \in \ZZ} V_i$, and the factorization $\bar Q = \prod_i
\bar Q_i$.  There is a unique lifting of this factorization to $Q =
\prod Q_i$ in $A[S]$ with the $Q_i$ monic: $Q_i$ is characterized as
the largest monic divisor of $Q$ all of whose roots have image in
$\kappa(P)$ strictly pure of weight $i-1$.  Since $\bar Q_i,\bar Q_j$
generate the unit ideal in $\kappa(P)[S]$ if $i \neq j$, after perhaps
shrinking $X$ around $P$ one has that $Q_i,Q_j$ generate the unit
ideal in $A[S]$ if $i \neq j$.  It follows that the decomposition $V =
\oplus_{i \in \ZZ} V_i$ is obtained by specializing at $P$ the
decomposition $\scrT = \bigoplus_{i \in \ZZ} \scrT_i$, where $\scrT_i
= \ker(Q_i(f) \mid \scrT)$.  We set $\scrV_i = \scrV \otimes_A
\kappa(X)$ to obtain a decomposition $\scrV = \oplus_{i \in \ZZ}
\scrV_i$.  The $\scrT_i$ are projective by construction, and one has
$\det(S-f \mid \scrT_i) = \det(S-f \mid \scrV_i) = Q_i(S)$.

Since $\scrV_i$ is the subspace of $\scrV$ spanned by those
generalized $f$-eigenvectors whose eigenvalues are roots of $Q_i(S)$,
it follows from the characterization of the $Q_i$ above that
$N(\scrV_i) \subseteq \scrV_{i-2}$.  Because $\scrT_i = \scrV_i \cap
\scrT$, this implies that also $N(\scrT_i) \subseteq \scrT_{i-2}$.
For $i \geq 0$, the purity of $V$ (because $P \in X_\mathrm{alg}$) implies that the map $N^i \cn V_i
\to V_{-i}$ is an ismorphism, so $N^i \cn \scrT_i \to \scrT_{-i}$
becomes isomorphism after perhaps shrinking $X$ around $P$, and
therefore $N^i \cn \scrV_i \to \scrV_{-i}$ is also an isomorphism.  It
follows from the characterization of the monodromy filtration given in
\cite[(1.3.1)]{N3} that $\{\scrV_i\}_i$ is a splitting of the
monodromy filtration on $\scrV$.  We conclude that, in the notations
of \cite[(1.3.6)]{N3}, one has
\[
d_i(\scrV) = \dim_{\kappa(X)} \scrV_i = \rank_A \scrT_i =
\dim_{\kappa(P)} V_i = d_i(V),
\]
and therefore also $m_i(\scrV) = m_i(\scrT)$ and $\rank (N^r \mid
\scrV) = \rank(N^r \mid V)$ for $r \geq 0$.  It is easy to see that
the natural map
\begin{equation}\label{E:control N}
\img(N^r \mid \scrT) \otimes_A \kappa(P) \to \img(N^r \mid V)
\end{equation}
is surjective.  However, we have arranged that $\img(N^r \mid \scrT)$
is a projective $A$-module, so the preceding numerics show that the
source and target of the above map have the same
$\kappa(P)$-dimension.  It follows that \eqref{E:control N} is an isomorphism.

We return to $\vep$-factors.  One has factorizations
\[
\vep(\scrV)
 = \vep(\scrV^{N\text{-ss}}) \det(-f \mid \scrV_g/\scrV_f),
\qquad
\vep(V) = \vep(V^{N\text{-ss}}) \det(-f \mid V_g/V_f)
\]
using \cite[(2.1.2.3)]{N3}; in each equation the first two terms
belong to $\{\pm1\}$ and therefore the third does too.  It suffices to
compare the factors on the right hand sides.  An argument is given to
treat each factor in the proof of \cite[Proposition~2.2.4]{N3}, and
these arguments apply directly to our setting, granted one knows the
following two facts.  The first fact is that the natural maps
\[
\scrT^{N\text{-ss}} \otimes_A \kappa(P)
\stackrel\sim\to
V^{N\text{-ss}},
\qquad
\scrT_g/\scrT_f \otimes_A \kappa(P)
\stackrel\sim\to
V_g/V_f
\]
are isomorphisms.  The first is obvious, and the second follows from
the isomorphism \eqref{E:control N} and the fact that $N$ induces an
isomorphism
\begin{equation}\label{E:compare N}
\scrT_g/\scrT_f \stackrel\sim\to \img(N \mid \scrT)^{I_K}.
\end{equation}
The second fact is that the module $\scrT$ (resp.\ $\scrT_g/\scrT_f$)
is flat, so that one can form the trace (resp.\ determinant) of an
endomorphism in a manner compatible with base change.  But for $\scrT$
(resp.\ for $\scrT_g/\scrT_f$), this is obvious (resp.\ follows from
the isomorphism \eqref{E:compare N} and passing to a direct summand).

We now treat the second claim, continuing with the notations just
introduced, and in particular considering $\scrT$ and $V$ as
Weil--Deligne representations.  It suffices to show that inertia
(resp.\ monodromy) acts trivially on $\scrT$ if and only if it does on
$V$.  For inertia, this follows from the fact that the unique
$I_K$-stable decomposition $\scrT = \scrT^{I_K} \oplus \scrT'$ is
compatible with arbitrary base change.  For monodromy, this becomes
clear after we may make the same blowups and shrinkings of $X$ around
$P$ as in the proof of the first claim, to obtain the isomorphism
\eqref{E:control N} with $\img(N^r \mid \scrT)$ a projective
$A$-module.
\end{proof}

We associate to the local datum $\scrX$ the complex
$C^\bullet(G_K,\scrT)$ of continuous cochains of $G_K$ on $\scrT$, and
also the complex $C^\bullet_\unr(G_K,\scrT) = [\scrT^{I_K}
  \xrightarrow{f-1} \scrT^{I_K}]$ in degrees $0,1$.

\begin{prop}\label{P:acyclic}
For each $P \in X_\pair$ for which $WD(V)$ is pure, after perhaps
shrinking $X$ around $P$, the complexes $C^\bullet(G_K,\scrT)$ and
$C^\bullet_\unr(G_K,\scrT)$ are acyclic.
\end{prop}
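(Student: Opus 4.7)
The plan is to first verify acyclicity of the fibers at $P$, using the purity and self-duality hypotheses, and then to spread this acyclicity to a neighborhood of $P$ via a flat-model argument parallel to the proof of Proposition~\ref{P:local eps locally constant}.

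At the fiber $V = \scrT \otimes_{\calO_X} \kappa(P)$: since $P \in X_\pair$, after rescaling $j$ by a suitable unit its specialization is an isomorphism, so $WD(V) \cong WD(V)^*(1)$; combined with the purity hypothesis this forces $WD(V)$ to be pure of weight $-1$. For $\ell \neq p$, Grothendieck's $\ell$-adic monodromy theorem identifies $V^{I_K} = (V^{\rho(I_K)})^{N=0}$, and $V^{\rho(I_K)}$ inherits purity of weight $-1$. Applying the primitive (Jacobson--Morozov) decomposition to $V^{\rho(I_K)}$ with respect to its monodromy filtration, one sees that $\ker(N) \cap V^{\rho(I_K)}$ is a direct sum of subspaces lying in the graded pieces $\Gr^M_{-i}$ for $i \geq 0$, on which purity forces the Frobenius eigenvalues to be $q$-Weil numbers of weight $-1-i$. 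So $1$ is not an eigenvalue of $f$ on $V^{I_K}$; this gives both $V^{G_K} = V^{I_K,f=1} = 0$ and the acyclicity of the fiber of $C^\bullet_\unr$. Local Tate duality together with $V \cong V^*(1)$ yields $H^2(G_K,V) \cong H^0(G_K,V^*(1))^\dual = 0$, and the vanishing Euler--Poincar\'e characteristic for $\ell \neq p$ then forces $H^1(G_K,V) = 0$.

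For spreading out: shrink $X$ to a connected affinoid $\Max(A)$ containing $P$, apply Proposition~\ref{P:u over affinoid}(2) to find $u \in \Frac(A)^\times$ with $P \in X_\pair^u$, and after replacing $j$ with $uj$ and shrinking further assume $j$ is a genuine isomorphism and $\scrT$ is free. Proposition~\ref{P:WD in families} supplies a finite image $\rho(I_K) \subset \Aut_{\calO_X}(\scrT)$ and a global nilpotent $N \in \End_{\calO_X}(\scrT)$ commuting with $\rho(I_K)$. Since $|\rho(I_K)|$ is invertible in $\calO_X$, averaging produces a projector onto $\scrT^{\rho(I_K)}$, which is therefore a free $\calO_X$-direct summand compatible with specialization. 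Applying the blow-up argument of Proposition~\ref{P:local eps locally constant} to the Fitting ideals of $\coker(N^r \mid \scrT^{\rho(I_K)})$, and shrinking around a preimage of $P$, we arrange that each $\img(N^r \mid \scrT^{\rho(I_K)})$ is free; consequently $\scrT^{I_K} = \ker(N \mid \scrT^{\rho(I_K)})$ is itself a free direct summand compatible with specialization. The operator $(f-1)$ on the free module $\scrT^{I_K}$ is $\calO_X$-linear and invertible at $P$, hence invertible on a neighborhood of $P$ by non-vanishing of its determinant; this gives acyclicity of $C^\bullet_\unr(G_K, \scrT)$. For acyclicity of $C^\bullet(G_K, \scrT)$, we use that in the $\ell \neq p$ case the continuous Galois cohomology sheaves $H^i(G_K, \scrT)$ are coherent and (under the flat models arranged above) commute with specialization at $P$, where their fibers $H^i(G_K, V) = 0$ vanish; coherent Nakayama then forces them to vanish in a neighborhood of $P$.

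The main obstacle, mirroring the setup of Proposition~\ref{P:local eps locally constant}, is ensuring that $\scrT^{I_K}$, and more generally the Galois cohomology of $\scrT$, commute with specialization at $P$; the blow-up of the Fitting ideals of the cokernels of the $N^r$ is designed to secure precisely this.
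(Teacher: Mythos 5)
Your fiber computation is fine (it is essentially the content of \cite[4.2.2(1)]{N3}, which the paper simply cites), but both halves of your spreading-out step have genuine gaps. For $C^\bullet(G_K,\scrT)$ you assert that the cohomology sheaves $H^i(G_K,\scrT)$ are coherent and ``commute with specialization at $P$ under the flat models arranged above.'' That underived base-change claim is exactly the nontrivial point, and blowing up Fitting ideals of $\coker(N^r)$ does nothing to secure it: those blow-ups control the monodromy operator, not continuous cochain cohomology, whose $H^1$ involves inertia invariants and does not commute with specialization in general. What is true, and what suffices, is the derived statement $C^\bullet(G_K,\scrT)\Lotimes_A\kappa(P)\simeq C^\bullet(G_K,V)$ from \cite[Theorem~1.4(1)]{P}; combined with your fiberwise acyclicity and the Nakayama-type argument for complexes with coherent cohomology of \cite[Lemma~4.1.5]{KPX}, this gives acyclicity on a neighborhood with no flatness preparation at all. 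This is the paper's route for the first complex.

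For $C^\bullet_\unr(G_K,\scrT)$ there are two further problems. First, your blow-up replaces $X$ by a modification $X'$, whereas the proposition demands acyclicity after \emph{shrinking} $X$, i.e.\ on an admissible open neighborhood of $P$ in $X$ itself; acyclicity of $[(\pi^*\scrT)^{I_K}\xrightarrow{f-1}(\pi^*\scrT)^{I_K}]$ near a preimage of $P$ does not descend, since $(\pi^*\scrT)^{I_K}\neq\pi^*(\scrT^{I_K})$ in general. (In Proposition~\ref{P:local eps locally constant} modifications are harmless because only the generic and special fibers enter the statement; here the complex over $X$ is itself the object of study.) Second, the ``consequently'' in your claim that $\ker(N\mid\scrT^{\rho(I_K)})$ is compatible with specialization requires the rank of $N$ not to drop at $P$, which in Proposition~\ref{P:local eps locally constant} is deduced from purity via the Frobenius-eigenvalue decomposition after a finite extension; freeness of the images alone does not give it (though for invertibility of $f-1$ near $P$ you would only need the fiber of $\scrT^{I_K}$ to inject $f$-equivariantly into $V^{I_K}$, which the direct-summand property does provide). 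The paper's proof sidesteps inertia invariants entirely: it uses the triangle \eqref{E:unr to total to coinv} relating $C^\bullet_\unr(G_K,\scrT)$, $C^\bullet(G_K,\scrT)$ and the coinvariant complex $[\scrT_{I_K}\xrightarrow{f-1}\scrT_{I_K}]$; since coinvariants are right exact, $\scrT_{I_K}/(f-1)\otimes_A\kappa(P)\cong V_{G_K}=0$ by purity, so ordinary Nakayama makes $f-1$ surjective, hence bijective, on $\scrT_{I_K}$ after shrinking, and acyclicity of $C^\bullet_\unr$ follows from the triangle and the acyclicity of $C^\bullet(G_K,\scrT)$. You should either rework your argument along these lines or supply the missing descent and base-change inputs.
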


\begin{proof}
It suffices to work in a sufficiently small affinoid neighborhood
$\Max(A)$ of $P$.  For the first complex, the argument of
\cite[Lemma~4.1.5]{KPX} shows that it suffices to check that
$C^\bullet(G_K,\scrT) \Lotimes_A \kappa(P)$ is acyclic; this claim
follows from the base change result of \cite[Theorem~1.4(1)]{P}
together with \cite[4.2.2(1)]{N3}.  For the second complex, we take
mapping fibers of $f-1$ on the distinguished triangle
\[
\scrT^{I_K} \to C^\bullet(I_K,\scrT) \to \scrT_{I_K}[-1]
\]
to obtain the distinguished triangle
\begin{equation}
\label{E:unr to total to coinv}
C^\bullet_\unr(G_K,\scrT)
\to
C^\bullet(G_K,\scrT)
\to
[\scrT_{I_K} \xrightarrow{f-1} \scrT_{I_K}],
\end{equation}
with the last complex concentrated in degrees $1,2$.  We compute
\[
\scrT_{I_K}/(f-1) \otimes_A \kappa(P)
\cong (\scrT \otimes_A \kappa(P))_{G_K}
= 0
\]
by the purity of $\scrT \otimes_A \kappa(P)$.  In particular, after
shrinking $X$ around $P$ we have that $f-1$ is surjective, hence
bijective, on $\scrT_{I_K}$.  This forces $[\scrT_{I_K}
  \xrightarrow{f-1} \scrT_{I_K}]$ to be acyclic, and so is
$C^\bullet_\unr(G_K,\scrT)$ by \eqref{E:unr to total to coinv}.
\end{proof}

\subsection{Global data}\label{SS:global data}

The rest of this section is concerned with adapting
\cite[\S\S4--5]{N3} to the setting of rigid analytic spaces.  In order
to state our main theorem, we will first need to invoke a good deal of
notation, consisting mostly of straightforward modifications of
\cite[\S\S4--5]{N3}.  \emph{These notations will be in force through
  the remainder of this section.}

We refer to \cite{KPX} for definitions and background on
(``arithmetic'') families of $(\vphi,\Ga)$-modules over rigid analytic
spaces.

For the rest of this section, $F$ is a number field, $S$ is a fixed
finite set of places of $F$ containing the places $S_p$ above $p$ and
the archimedean places $S_\infty$, $F_S$ is a maximal algebraic
extension of $F$ unramified outside $S$, and $G_{F,S} = \Gal(F_S/S)$.
For a place $v$ of $F$ we fix an algebraic closure $F_v^\alg$ of $F_v$
and an embedding $F_S \hookrightarrow F_v^\alg$, giving maps $G_{F_v}
\to G_{F,S}$.  If $X$ has an action of $G_{F,S}$, we write $X_v$ for
$X$ with the action restricted to $G_{F_v}$.

\begin{defn}\label{D:global datum}
Define a \emph{global point datum} $(L,V,j,\{S_v\}_{v \in S_p})$ to
consist of:
\begin{itemize}
\item a finite extension $L/\Qp$,
\item a finite-dimensional $L$-vector space $V$ of dimension $d$
  equipped with a continuous, $L$-linear action of $G_{F,S}$,
\item a skew-symmetric, $L[G_{F,S}]$-linear morphism
  $\langle\cdot,\cdot\rangle \cn V \otimes_L V \to L(1)$ whose adjoint
  map $j: V \to V^*(1)$ is an isomorphism,
\item for each $v \in S_p$, a short exact sequence
\[
S_v \cn 0 \to D_v^+ \to D_v \to D_v^- \to 0
\]
  of $(\vphi, \Ga)$-modules over $\calR_L(\pi_{F_v})$, where $D_v =
  \bbD_\rig(V_v)$, such that $\langle
  D_v^+,D_v^+\rangle_{\bbD_\rig(j)} = 0$, $\rank_{\calR_L(\pi_{F_v})}
  D_v^\pm = d/2$, and $S_v$ makes $D_v$ Panchishkin, and
\item for every finite place $v$ where $V_v$ is ramified, $V_v$ is
  pure (necessarily of weight $-1$).
\end{itemize}
Usually we will write simply $V$ for the tuple.

Define a \emph{global datum} $\scrX = (X,\scrT,j,\{\scrS_v\}_{v \in
  S_p})$ to consist of:
\begin{itemize}
\item a reduced rigid analytic space $X$ over $\Qp$,
\item a locally free coherent $\calO_X$-module $\scrT$ of rank $d$
  equipped with a continuous, $\calO_X$-linear action of $G_{F,S}$,
\item a skew-symmetric, $\calO_X[G_{F,S}]$-linear morphism
  $\langle\cdot,\cdot\rangle_j \cn \scrT \otimes_{\calO_X} \scrT \to
  \calO_X(1)$ whose adjoint map $j \cn \scrT \to \scrT^*(1)$ is
  generically on $X$ an isomorphism (so $d$ is even), and
\item for each $v \in S_p$ a short exact sequence
\[
\scrS_v \cn 0 \to \scrD_v^+ \to \scrD_v \to \scrD_v^- \to 0
\]
  of $(\vphi,\Ga)$-modules over $\calR_{\calO_X}(\pi_{F_v})$, where
  $\scrD_v = \bbD_\rig(\scrT_v)$, such that
  $\langle\scrD_v^+,\scrD_v^+\rangle_{\bbD_\rig(j)} = 0$ and
  $\rank_{\calR_{\calO_X}(\pi_{F_v})} \scrD_v^\pm = d/2$.
\end{itemize}
Note that for any place $v \notin S_p \cup S_\infty$, the tuple
$\scrX_v = (X,\scrT_v,j)$ is a local datum for $F_v$.

A global point datum $(L,V,j,\{S_v\}_{v \in S_p})$ determines a global
datum upon taking $X = \{P\}$ to be the spectrum of $L$.  Conversely,
when $P \in X$ is fixed we write $L = \kappa(P)$, $V = \scrT
\otimes_{\calO_X} \kappa(P)$, $\ov j = j \otimes_{\calO_X} \kappa(P)$,
$S_v = \scrS_v \otimes_{\calO_X} \kappa(P)$, and $D_v^? = \scrD_v^?
\otimes_{\calO_X} \kappa(P)$ for $?=\emptyset,\pm$.  In the sequel we
will specifiy some points $P \in X$ for which $(L,V,\ov j,\{S_v\}_{v
  \in S_p})$ is a global point datum.
\end{defn}

Global data admit the same functorialities as local data, and the
notations $X_\pair$ and $X_\pair^u$ and Proposition~\ref{P:u over
  affinoid} carry over changing only ``local'' to ``global''
throughout.  In particular, when $X = \{P\}$ arises from a global
point datum, one has $X = X_\pair^1$.

We have placed no conditions on points $P$ of $X_\pair$ concerning the
duality of $D_v^\pm$, because the next proposition shows the duality
to be automatic.

\begin{prop}
\label{P:uj gurantees Drig}
If $u$ guarantees $P \in X_\pair$, then $\bbD_\rig(\overline{uj})$
induces, for each $v \in S_p$, isomorphisms $D_v^\pm \stackrel\sim\to
(D_v^\mp)^*(1)$.
\end{prop}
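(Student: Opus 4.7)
The plan is to combine the family-level Lagrangian condition with a snake-lemma argument after specializing at $P$. The main ingredients are the compatibility of $\bbD_\rig$ with base change (it commutes with $\otimes$, duals, and preserves isomorphisms) together with a flatness argument to propagate the Lagrangian condition from $\bbD_\rig(j)$ to $\bbD_\rig(uj)$ over the completed stalk.

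First, I would rephrase the Lagrangian hypothesis $\langle\scrD_v^+,\scrD_v^+\rangle_{\bbD_\rig(j)}=0$ as the assertion that $\bbD_\rig(j) \cn \scrD_v \to \scrD_v^*(1)$ extends to a morphism from $\scrS_v$ to the short exact sequence $0 \to (\scrD_v^-)^*(1) \to \scrD_v^*(1) \to (\scrD_v^+)^*(1) \to 0$ obtained by dualizing $\scrS_v$ and twisting by $(1)$, yielding induced maps $\scrD_v^+ \to (\scrD_v^-)^*(1)$ and $\scrD_v^- \to (\scrD_v^+)^*(1)$.

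The main technical step is to show the analogous statement for $\bbD_\rig(uj)$ after passing to the completed local ring $R = \calO_{X,P}^\wedge$. Set $K = \Frac(R)$, and consider the composite $\scrD_v^+ \otimes_{\calO_X} R \to \scrD_v^*(1) \otimes_{\calO_X} R \twoheadrightarrow (\scrD_v^+)^*(1) \otimes_{\calO_X} R$ built from $\bbD_\rig((uj)_P^\wedge)$. Over $K$ this composite equals $u$ times the analogous composite for $\bbD_\rig(j)$, which vanishes by the previous paragraph. Since $(\scrD_v^+)^*(1) \otimes_{\calO_X} R$ is $R$-flat---being a finite projective module over $\calR_R(\pi_{F_v})$, itself $R$-flat---it is $R$-torsion-free and embeds into its $K$-tensor; hence the composite already vanishes over $R$, so $\bbD_\rig((uj)_P^\wedge)$ extends to a morphism of short exact sequences over $R$.

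Finally, I would specialize at $P$ by tensoring with $\kappa(P)$ and invoking base change of $\bbD_\rig$, obtaining a morphism of short exact sequences
\[
\begin{array}{ccccccccc}
0 & \to & D_v^+ & \to & D_v & \to & D_v^- & \to & 0 \\
 & & \downarrow\alpha & & \downarrow\beta & & \downarrow\gamma & & \\
0 & \to & (D_v^-)^*(1) & \to & D_v^*(1) & \to & (D_v^+)^*(1) & \to & 0,
\end{array}
\]
with $\beta = \bbD_\rig(\overline{uj})$; this is an isomorphism because $\overline{uj} \cn V \to V^*(1)$ is (by the definition of $u$ guaranteeing $P \in X_\pair$) and $\bbD_\rig$ is a tensor-functor. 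The snake lemma forces $\alpha$ to be surjective. Since $D_v^+$ and $(D_v^-)^*(1)$ are both free of rank $d/2$ over $\calR_{\kappa(P)}(\pi_{F_v})$, composing $\alpha$ with any isomorphism of the target to the source yields a surjective endomorphism of a finitely generated module over a commutative ring, hence injective by the standard Vasconcelos-type fact; thus $\alpha$ is an isomorphism, and the five-lemma then forces $\gamma$ to be one too. I expect the hardest part to be the middle paragraph---specifically the flatness argument needed to pull back the vanishing from $K$ to $R$ in the presence of the fractional scalar $u$; everything else is bookkeeping with the functorial properties of $\bbD_\rig$.
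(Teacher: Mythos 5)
Your strategy is sound and, at the decisive step, genuinely different from the paper's. The paper's proof observes that the induced maps $D_v^\pm \to (D_v^\mp)^*(1)$ have full rank by nondegeneracy of $\overline{uj}$ and reduces everything to saturation of the image, which it checks on top exterior powers: the sequence $S_v$ and the identification $\bigwedge^d V \cong \kappa(P)(d/2)$ give an isomorphism $\bigwedge^{d/2} D_v^\pm \cong \bigwedge^{d/2}(D_v^\mp)^*(1)$, and a nonzero endomorphism of a rank-one $(\vphi,\Ga)$-module is an isomorphism. You instead run the snake lemma on the specialized exact sequences (these are indeed exact, since the terms of $\scrS_v$ are projective over $\calR_{\calO_X}(\pi_{F_v})$, so the sequence is locally split as modules and survives specialization) and then use the fact that a surjective endomorphism of a finitely generated module over a commutative ring is injective; this avoids the determinant identification and the rank-one classification, at the cost of needing the exactness just mentioned. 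Your middle paragraph also makes explicit the ``induces'' part that the paper's proof takes for granted; note, however, that it quietly needs $\bbD_\rig$ and the relative Robba ring over the completed local ring $\calO_{X,P}^\wedge$, which is not literally covered by the affinoid formalism of \cite{KPX}. When $u = f/s$ with $f,s$ defined on an affinoid neighborhood and $\bar s \neq 0$, one can stay inside the paper's framework via the relation $\bar s\,\overline{uj} = \bar f\,\ov{j}$ on $V$; the degenerate case $\bar s = 0$ is exactly where some integral or limiting argument like yours is needed, so this point deserves the care you give it, stated in terms the paper's references support.

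One concrete slip: with $\beta$ an isomorphism, the snake lemma yields $\ker\alpha = 0$, $\coker\gamma = 0$, and $\ker\gamma \cong \coker\alpha$; it does \emph{not} yield that $\alpha$ is surjective. The repair is immediate with your own tools, just applied to the other outer map: $\gamma \cn D_v^- \to (D_v^+)^*(1)$ is a surjection between free $\calR_{\kappa(P)}(\pi_{F_v})$-modules of the same rank $d/2$ (finite projective of constant rank over $\calR_{\kappa(P)}$ is free), so composing with a module isomorphism of the target onto the source gives a surjective endomorphism of a finitely generated module over a commutative ring, hence an injective one; thus $\gamma$ is an isomorphism, and then $\coker\alpha \cong \ker\gamma = 0$ together with $\ker\alpha = 0$ shows $\alpha$ is an isomorphism as well. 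With that correction, your argument goes through.
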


\begin{proof}
Since the map $D_v^\pm \to (D_v^\mp)^*(1)$ has full rank by the
nondegeneracy of $uj$, we just need to check that it has saturated
image, or equivalently that the top exterior power $\bigwedge^{d/2}
D_v^\pm \to \bigwedge^{d/2} (D_v^\mp)^*(1)$ has saturated image.
Indeed, one has an identification
\[
(\bigwedge^{d/2} D_v^\pm)
  \otimes_{\calR_{\kappa(P)}(\pi_{F_v})}
  (\bigwedge^{d/2} D_v^\mp)
\cong
\bigwedge^d D_v
=
\calR_{\kappa(P)}(\pi_{F_v})(d/2)
\]
arising from the sequence $S_v$ and the identification $\bigwedge^d V
\cong \kappa(P)(d/2)$ of \cite[(1.2.2)(2)]{N3}, hence there exists an
isomorphism $\bigwedge^{d/2} D_v^\pm \cong (\bigwedge^{d/2}
D_v^\mp)^*(d/2) = \bigwedge^{d/2} (D_v^\mp)^*(1)$, and any nonzero
endomorphism of a rank one object is an isomorphism and in particular
has saturated image.  (One can check that the two maps agree up to
sign, but we do not need this fact.)
\end{proof}

\begin{notation}
We define $X_\alg$ to be the set of points $P \in X_\pair$ satisfying
\begin{itemize}
\item for each $v \in S_p$, the sequence $S_v$ makes $D_v$
  Panchishkin, and
\item for each $v \notin S_\infty$ such that $\scrT|_{G_{F_v}}$ is
  ramified, $V_v$ is pure (of weight $-1$).
\end{itemize}
We make some remarks on this notation.  First, the purity hypothesis
always applies in particular to $v \in S_p$, but only applies to $v$
belonging to $S$.  Second, for $v \notin S_p \cup S_\infty$,
Proposition~\ref{P:local eps locally constant} shows that $V_v$ is
ramified if and only if $\scrT_v$ is ramified.  Thus, if $P \in X_\alg
\cap X_\pair^u$, then $(\kappa(P),V,\ov{uj},\{S_v\}_{v \in S_p})$ is a
global point datum.  The choice of $u$ will not matter, so we
abusively use $V$ to denote this tuple.  Third, if $f \cn Y \to X$ is
a morphism such that $f^*\scrX$ is a global datum, then one has
$f^{-1}(X_\alg) \subseteq Y_\alg$.  Finally, for any place $v \notin
S_p \cup S_\infty$, the subset $X_\alg$ for $\scrX$ is contained in
the subset $X_\alg$ for $\scrX_v$.
\end{notation}

\begin{example}\label{Ex:nekovar}
The data $(R, \mathcal{T}, (\cdot,\cdot), \{\mathcal{T}_v^+\}_{v \in
  S_p},P,u)$, assumed given in \cite[(5.1.2)]{N3} give rise to a
global datum $\scrX = (\mathrm{Spf}(R,\gothm_R)^\an, \mathcal{T}^\an,
(\cdot,\cdot)^\an, \{\bbD_\rig(\mathcal{T}_v^{+,\an})\}_{v \in S_p})$
and a point $P \in X_\alg \cap X_\pair^u$.
\end{example}

\subsection{Global algebraic invariants}

In this subsection we treat the variation of Selmer groups in
families.  We continue with the notations and hypotheses of
Subsection~\ref{SS:global data}.

Let $T$ be either $\scrT$ where $\scrX = (X,\scrT,j,\{\scrS_v\}_{v \in
  S_p})$ is a global datum, or $V$ where $(L,V,j,\{S_v\}_{v \in S_p})$
is a global point datum.  For $v \in S_p$ write $\bbD_\rig(T_v)^+ =
\scrD_v^+,D_v^+$ (respectively).  Choose a set $\Sigma$ with $S_p
\subseteq \Sigma \subseteq S \bs S_\infty$ of primes at which $T$ is
ramified.  Then the \emph{Selmer complex} (cf.\ \cite[\S5.2]{N3}) is
defined by
\[
\widetilde C^\bullet_f(G_{F,S},T;\Sigma) =
\Fib\left(
  C^\bullet(G_{F,S},T) \oplus \bigoplus_{v \in S-S_\infty} U_v^+(T)
  \longrightarrow \bigoplus_{v \in S-S_\infty} C^\bullet(G_{F_v},T)
\right),
\]
where $\Fib(f) = \Cone(f)[-1]$ denotes the mapping fiber of a morphism
of complexes, and
\[
U_v^+(T) = \begin{cases}
C^\bullet_{\vphi,\ga}(\bbD_\rig(T_v)^+), & v \in S_p, \\
0, & v \in \Sigma-S_p, \\
C^\bullet_\unr(G_{F_v},T)
 = [T^{I_v} \xrightarrow{f_v-1} T^{I_v}], & v \in S-\Sigma.
\end{cases}
\]
In the case $T = \scrT$, it is easy to see that the formation of these
complexes commutes (up to canonical quasi-isomorphism) with base
change to open subsets of $X$.  In particular, their cohomologies form
coherent sheaves on $X$.

When the choice of $S$ and $\Sigma$ is not important we will write
$\widetilde C^\bullet_f(F,T)$ for the Selmer complex, $\widetilde
H^\bullet_f(F,T)$ for its cohomology, and $\widetilde h^1_f(F,T)$ for
its generic rank when $T=\scrT$ (resp.\ dimension when $T=V$).  By the
following proposition, which is a straightforward adaptation of
\cite[Proposition~4.2.2]{N3} and \cite[Proposition~5.2.2(2)]{N3} to
our setup, the rational function (resp.\ integer) $\wt h^1_f(F,T)$ is
independent of choice of $S$ and $\Sigma$.

For $V$ a global point datum, we use $H^1_f(F,V)$ to denote the usual
\emph{Bloch--Kato Selmer group} (see, for example,
\cite[(4.1.1)]{N3}), and set $h^1_f(F,V) = \dim_L H^1_f(F,V)$.

\begin{prop}\label{P:selmer complex}
(1) Suppose $(X,\scrT,j,\{\scrS_v\}_{v \in S_p})$ is a global datum.
  For each $P \in X_\alg$, after perhaps shrinking $X$ around $P$, the
  complex $\widetilde C^\bullet_f(G_{F,S},\scrT;\Sigma)$ is
  canonically independent (up to quasi-isomorphism) of $S$ and
  $\Sigma$.

(2) Suppose $(L,V,j,\{S_v\}_{v \in S_p})$ is a global point datum.
  The complex $\widetilde C^\bullet_f(G_{F,S},V;\Sigma)$ is
  canonically independent of (up to quasi-isomorphism) of $S$ and
  $\Sigma$.  Moreover, we have $\wt h^1_f(F,V) = h^1_f(F,V) + \sum_{v
    \in S_p} h^0(D_v^-)$.
\end{prop}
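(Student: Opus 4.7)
For part (1), I would show independence of $(S,\Sigma)$ by reducing to a sequence of elementary comparisons. Given two pairs $(S,\Sigma)$ and $(S',\Sigma')$ satisfying the hypotheses, it suffices to compare each with the common refinement $(S \cup S', \Sigma \cup \Sigma')$; by working one place $v$ at a time, two scenarios arise. First, adjoining a place $v \notin S$ (necessarily unramified) to $S$ without adding it to $\Sigma$ changes the Selmer complex by an extra summand $\Fib(C^\bullet_\unr(G_{F_v},\scrT) \to C^\bullet(G_{F_v},\scrT))$; when $\scrT|_{G_{F_v}}$ is unramified and $v \nmid p$, both complexes coincide with $[\scrT^{I_v} \xrightarrow{f_v-1} \scrT^{I_v}]$, so this fiber is acyclic. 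Second, moving $v \in S$ from outside $\Sigma$ into $\Sigma$ changes the complex by a shift of $C^\bullet_\unr(G_{F_v},\scrT)$ itself, whose acyclicity after shrinking $X$ around $P$ is exactly Proposition~\ref{P:acyclic}, with the purity hypothesis supplied by $P \in X_\alg$. Since only finitely many places in $S \cup S'$ are affected, a single common shrinking suffices.

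For part (2)(a), the independence of $(S,\Sigma)$ follows by the identical argument applied to the fixed global point datum $V$; no shrinking is needed, because the assumption that $V|_{G_{F_v}}$ is pure at every ramified $v$ supplies the hypothesis of Proposition~\ref{P:acyclic} directly. For the dimension formula in (2)(b), I would use the tautological long exact sequence
\[
\cdots \to \wt H^i_f(F,V) \to H^i(G_{F,S},V) \oplus \bigoplus_v H^i(U_v^+(V)) \to \bigoplus_v H^i(G_{F_v},V) \to \cdots
\]
and compare each local contribution with the Bloch--Kato local condition. At $v \in S \bs S_p$, the local condition used (either $C^\bullet_\unr$ or $0$, according to whether $v \in \Sigma$) gives exactly $H^1_f(G_{F_v},V)$ in the induced subspace, contributing nothing extra. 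At $v \in S_p$, the map $H^1(D_v^+) \to H^1(D_v) = H^1(G_{F_v},V)$ has image equal to $H^1_f(D_v) = H^1_f(G_{F_v},V)$ (using Lemma~\ref{L:panchishkin H1f} combined with the identification $H^1(D_v^+) = H^1_g(D_v^+)$ from \S\ref{S:numerology}) and kernel governed by $H^0(D_v^-)$ via the long exact sequence attached to $S_v$. Plugging these local identifications into the long exact sequence above and invoking Tate duality (to cancel the contributions in degrees $0$ and $2$ against those of $V^*(1) \cong V$) produces the claimed identity $\wt h^1_f(F,V) = h^1_f(F,V) + \sum_{v \in S_p} h^0(D_v^-)$.

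The main obstacle lies in the local analysis at $v \in S_p$ for the dimension count: Lemma~\ref{L:panchishkin H1f} establishes the crucial short exact sequence only under the crystalline vanishing hypothesis $\bbD_\crys(D_v)^{\vphi=1} = \bbD_\crys(D_v^*(1))^{\vphi=1} = 0$, which need not hold for a general global point datum. I would handle this by carrying along correction terms coming from $H^0(D_v)$, $H^0(D_v^*(1)[1/t])$, and the corresponding global $H^0$'s via Poitou--Tate, and verifying that these correction terms mutually cancel in the Euler-characteristic bookkeeping. Keeping this accounting clean through the long exact sequence, while simultaneously making sure the $v \notin S_p$ contributions really add nothing, is the technical heart of the computation.
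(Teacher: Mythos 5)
There are two genuine problems. First, your treatment of independence in $S$ does not work. When you enlarge $S$ to $S\cup\{v\}$ the global term $C^\bullet(G_{F,S},\scrT)$ in the mapping fiber also changes, to $C^\bullet(G_{F,S\cup\{v\}},\scrT)$, so the new Selmer complex is not simply the old one plus a direct summand $\Fib\bigl(C^\bullet_\unr(G_{F_v},\scrT)\to C^\bullet(G_{F_v},\scrT)\bigr)$. Worse, that fiber is not acyclic: $C^\bullet(G_{F_v},\scrT)$ is the full continuous cochain complex of $G_{F_v}$, not the unramified complex $[\scrT^{I_v}\xrightarrow{f_v-1}\scrT^{I_v}]$, and even for unramified $\scrT$ at $v\nmid p$ one has $H^1(G_{F_v},V)/H^1_\unr \cong H^1_\unr(G_{F_v},V^*(1))^*$ and $H^2(G_{F_v},V)\cong H^0(G_{F_v},V^*(1))^*$, which need not vanish (no purity is assumed at unramified places outside $S$). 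The actual mechanism is an excision comparison in which the change of the global cohomology from $G_{F,S}$ to $G_{F,S'}$ cancels against the new local terms with unramified conditions; this is exactly the general fact \cite[Proposition~7.8.8]{N-selmer} that the paper invokes. Your handling of independence in $\Sigma$ via Proposition~\ref{P:acyclic} is the same as the paper's and is fine.

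Second, for the dimension formula your ``main obstacle'' is not an obstacle, but because you leave it unresolved the formula is never actually established. For a global point datum, purity (of weight $-1$) of $V_v$ is required at every ramified finite place, hence in particular at every $v\in S_p$, and purity forces $\bbD_\crys(D_v)^{\vphi=1}=\bbD_\crys(D_v^*(1))^{\vphi=1}=0$ (this is the same use of \cite[4.2.2(1)]{N3} as in the proof of Proposition~\ref{P:acyclic}); so the hypothesis of Lemma~\ref{L:panchishkin H1f} holds automatically and no ``correction terms'' arise. The paper's route is then short: take $\Sigma=S_p$, use the mapping-fiber long exact sequence together with Lemma~\ref{L:panchishkin H1f} to get
\[
H^0(G_{F,S},V)\to \bigoplus_{v\in S_p}H^0(D_v^-)\to \widetilde H^1_f(F,V)\to H^1_f(F,V)\to 0,
\]
and note that purity also kills $H^0(G_{F,S},V)$, giving $\wt h^1_f(F,V)=h^1_f(F,V)+\sum_{v\in S_p}h^0(D_v^-)$. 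Your sketch gestures at this with Tate duality and an Euler-characteristic bookkeeping you do not carry out, so as written the identity in part (2) is not proved; once you recognize that purity supplies the hypothesis of Lemma~\ref{L:panchishkin H1f} and the vanishing of $H^0(G_{F,S},V)$, the computation collapses to the displayed exact sequence and no duality cancellation is needed.
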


\begin{proof}
In both (1) and (2), the independence with respect to $\Sigma$ follows
from Proposition~\ref{P:acyclic}, and the independence with respect to
$S$ is a general fact \cite[Proposition~7.8.8]{N-selmer}.  We take
$\Sigma=S_p$ from now on. By the definition of the Selmer complex as a
mapping fiber and Lemma~\ref{L:panchishkin H1f}, we deduce an exact
sequence
\[
H^0(G_{F,S},V)
\to \oplus_{v \in S_p} H^0(D_v^-)
\to \widetilde H^1_f(F, V)
\to H^1_f(F, V)
\to 0.
\]
The purity of $V$ forces $H^0(G_{F,S},V)=0$, and the desired identity
follows.
\end{proof}

We have seen that the ability to use the choice $\Sigma = S_p$ allows
one to compare $\wt H^1_f(F,V)$ to $H^1_f(F,V)$.  On the other hand,
taking $\Sigma$ to be the set of all ramified primes for $\scrT$
allows one to use general base-change results without complications,
which is an ingredient in the proof of Proposition~\ref{P:vep_alg}
below.

\begin{prop}\label{P:vep_alg}
If $\scrX$ is a global datum with $X$ irreducible and $P \in X_\alg$,
then one has $\wt h^1_f(F,V) \equiv \wt h^1_f(F,\scrT) \pmod 2$.
\end{prop}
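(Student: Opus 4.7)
The strategy is a rigid-analytic generalization of Nekov\'a\v{r}'s approach in \cite{N3}, cast in the $(\vphi,\Ga)$-module family framework of \cite{P,KPX}: we exhibit $\wt C^\bullet_f(F,\scrT)$ as a perfect complex of $\calO_X$-modules carrying a skew-symmetric autoduality, reduce to a discrete valuation ring, and invoke a Cassels--Tate-style parity argument. After shrinking $X$ around $P$, the complex $\wt C^\bullet_f(G_{F,S},\scrT;\Sigma)$ with $\Sigma=S_p$ is a perfect complex of $\calO_X$-modules in degrees $[0,3]$: the global continuous cochain complex is perfect by \cite{P}, the local complexes $C^\bullet_{\vphi,\ga}(\scrD_v^+)$ for $v\in S_p$ are perfect by \cite{KPX}, and for $v\in S\setminus(S_p\cup S_\infty)$ the unramified local complexes $C^\bullet_\unr(G_{F_v},\scrT)$ are acyclic after shrinking by Proposition~\ref{P:acyclic} (using Proposition~\ref{P:local eps locally constant}). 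Combining the skew-symmetric $j$, the Lagrangian condition $\langle\scrD_v^+,\scrD_v^+\rangle_{\bbD_\rig(j)}=0$ for $v\in S_p$, Nekov\'a\v{r}'s Poitou--Tate duality formalism for Selmer complexes from \cite{N-selmer}, and the local duality for families of $(\vphi,\Ga)$-modules in \cite{KPX}, we obtain a skew-symmetric autoduality
\[
\wt C^\bullet_f(F,\scrT)\xrightarrow{\sim}\RHom_{\calO_X}(\wt C^\bullet_f(F,\scrT),\calO_X)[-3].
\]

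Next we reduce to a DVR. After further shrinking and passing to the normalization (Proposition~\ref{P:rational functions}(4)), we may assume $X$ is irreducible and normal. Because each $\wt H^i_f(F,\scrT)$ is a coherent $\calO_X$-module, the locus $U\subseteq X$ where $\wt H^1_f(F,\scrT)$ and $\wt H^2_f(F,\scrT)$ are both locally free is Zariski dense and open. A generic one-dimensional slicing through $P$ (e.g.\ by intersecting with a sufficiently generic hypersurface iteratively down to dimension one) yields an irreducible smooth one-dimensional rigid analytic subspace $Y\subseteq X$ through $P$ whose generic point lies in $U$, and in particular $\wt h^1_f(F,\scrT|_Y)=\wt h^1_f(F,\scrT)$. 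Replacing $X$ by $Y$, the complete local ring $A:=\calO_{X,P}^\wedge$ is a DVR with residue field $k=\kappa(P)$ and fraction field $K$, and the Selmer complex restricts to a perfect complex over $A$ still carrying the skew-symmetric autoduality.

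We then conclude by the Cassels--Tate argument. Set $M_i=\wt H^i_f(F,\scrT)$, finitely generated over $A$. Since $V$ is pure of weight $-1$ at every ramified finite place (by $P\in X_\alg$), the Galois invariants $V^{G_{F,S}}$ vanish, and consequently $\wt H^0_f(F,V)=0$. The base-change spectral sequence, which degenerates over the DVR $A$, gives
\[
0=\wt h^0_f(F,V)=\rank_A M_0+d(M_0^\tors)+d(M_1^\tors),
\]
where $d(\cdot)$ denotes the minimal number of generators of the torsion part. Hence $M_0=0$ and $M_1$ is $A$-free of rank $\wt h^1_f(F,\scrT)$, whence
\[
\wt h^1_f(F,V)=\rank_A M_1+d(M_1^\tors)+d(M_2^\tors)=\wt h^1_f(F,\scrT)+d(M_2^\tors).
\]
The skew-symmetric autoduality of $\wt C^\bullet_f(F,\scrT)$, together with the standard Bockstein construction, endows $M_2^\tors$ with a perfect alternating $A$-bilinear Cassels--Tate-style pairing $M_2^\tors\times M_2^\tors\to K/A$; the structure theorem for symplectic modules over a DVR forces $d(M_2^\tors)$ to be even, and the desired parity statement follows.

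The main obstacle is the construction of the skew-symmetric autoduality of the Selmer complex and of the associated Cassels--Tate pairing on $M_2^\tors$ in the $(\vphi,\Ga)$-module family context. This requires combining Nekov\'a\v{r}'s Selmer-complex duality formalism \cite{N-selmer} with the local duality for families of $(\vphi,\Ga)$-modules from \cite{KPX}, and carefully checking that the Lagrangian condition on the $\scrD_v^+$ together with the skew-symmetry of $j$ produce the correct signs so that the resulting Cassels--Tate pairing is alternating and non-degenerate.
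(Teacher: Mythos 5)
Your overall skeleton is the same as the paper's: exhibit the Selmer complex as a perfect complex with good base change, kill the local conditions away from $p$ via Proposition~\ref{P:acyclic}, reduce to a discrete valuation ring, and use a skew-symmetric (Cassels--Tate type) pairing on the torsion of $\wt H^2_f$ to show the rank can only jump by an even amount. But the step where you pass from $X$ to a DVR is genuinely different from the paper and is where the real gaps lie. Your ``generic one-dimensional slicing through $P$'' is asserted, not proved: you need an \emph{irreducible, smooth} curve $Y\subseteq X$ through $P$ which moreover meets the dense Zariski-open locus $U$ where the Selmer cohomology sheaves are locally free (so that $\wt h^1_f(F,\scrT|_Y)=\wt h^1_f(F,\scrT)$ on the nose). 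If $X$ is singular at $P$, no iterated hypersurface section through $P$ is smooth at $P$, and irreducibility of such sections through a prescribed point requires a rigid-analytic Bertini-type statement that is not available off the shelf; nor is it automatic that a curve through $P$ can be forced to meet $U$ without some preliminary geometric normalization. The paper avoids exactly this: it first replaces $X$ by a resolution of singularities and shrinks to a polydisc $\Max(\kappa(P)\langle T_1,\dots,T_r\rangle)$, then transports the parity one dimension at a time along the chain $\{P\}=X_0\subset X_1\subset\cdots\subset X_r=X$ of coordinate sub-polydiscs, working over the DVR $A_{j,(T_j)}$ whose residue field is $\Frac(A_{j-1})$. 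Crucially, this step-by-step argument never needs any slice to meet the locally free locus: the duality argument over each $A_{j,(T_j)}$ shows directly that the \emph{generic} ranks of consecutive slices agree mod $2$, which is all that is needed. If you insist on a one-step slice capturing the exact generic rank, you must supply the missing slicing lemma (or first resolve singularities and take a suitably generic line through $P$, with an argument that a generic direction avoids being contained in $X\setminus U$).

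The second gap is the one you yourself flag as ``the main obstacle'': the skew-symmetric autoduality of $\wt C^\bullet_f(F,\scrT)$ over $\calO_X$ and the resulting alternating pairing on $M_2^{\mathrm{tors}}$ are not constructed, only described. The paper does not build a global autoduality over $\calO_X$ at all; it applies Nekov\'a\v{r}'s results (\cite{N-selmer}, Theorem~10.2.3 and Proposition~10.2.5, following the scheme of \cite[Proposition~5.2.2(4)--(7)]{N3}) directly over each DVR localization, fed by the base-change short exact sequences, and it uses Proposition~\ref{P:uj gurantees Drig} to know that the Lagrangian local conditions at $v\in S_p$ are exactly exchanged under $j$, plus the freedom (Proposition~\ref{P:selmer complex}) to take $\Sigma$ to be the set of all ramified primes so that base change is unproblematic. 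Until you either carry out the family-level duality construction you sketch, or reorganize the argument as the paper does so that duality is only ever invoked over a DVR, the proof is incomplete at its central point.
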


\begin{proof}
We use implicitly, and repeatedly, the fact that the formation of $\wt
h^1_f(F,\scrT)$ commutes replacing $X$ by an admissible open subset,
or by any space $Y$ with a morphism $f \cn Y \to X$ that is
generically an isomorphism (although we make no claim about whether
$\wt C^\bullet_f(F,\scrT)$ itself commutes with the latter base
change).  In particular, if $u$ guarantees $P \in X_\pair$ then we may
replace $j$ by $uj$ and shrink $X$ around $P$ so that $j$ is an
isomorphism, and moreover $\scrT$ is free.  We may also replace $X$
with a resolution of singularities (and $P$ by a point in its
preimage) and, after a constant scalar extension, shrink $X$ to have
the form $X = \Max(A)$ with $A = \kappa(P)\langle
T_1,\ldots,T_r\rangle$, with $P$ corresponding to $T_1 = \ldots = T_r
= 0$.

Define the increasing system of closed subspaces $\{P\} = X_0 \subset
X_1 \subset \cdots \subset X_r = X$ by letting $X_j$ correspond to
$T_{j+1} = \cdots = T_r = 0$ with inclusion $\iota_j \cn X_j
\hookrightarrow X$, so that $X_j = \Max(A_j)$ with $A_j =
\kappa(P)\langle T_1,\ldots,T_j\rangle$.  Write $\scrT_j =
\iota_j^*\scrT$ for brevity.  Note that each $\scrX_j =
\iota_j^*\scrX$ is a global datum with $X_{j,\pair} = X_j$ and $P \in
X_{j,\alg}$.

After shrinking $X$ (and therefore the $X_j$) around $P$ by rescaling
the parameters $T_j$, we may assume (Proposition~\ref{P:acyclic}) that
all the complexes $C^\bullet(G_{F_v},\scrT_j)$ and
$C^\bullet_\unr(G_{F_v},\scrT_j)$, $v \in S \bs (S_p \cup S_\infty)$,
are acyclic and (Proposition~\ref{P:selmer complex}(2)) the $\wt
C^\bullet_f(F,\scrT_j)$ are independent of $S$ and $\Sigma$.  Because
we may take $\Sigma$ to be the set of ramified places for $\scrT$, it
then follows immediately from general base-change results that the
natural map is a quasi-isomorphism for $j > 0$,
\[
\widetilde C^\bullet_f(F,\scrT_j) \Lotimes_{A_j} A_{j-1}
\stackrel\sim\to
\widetilde C^\bullet_f(F,\scrT_{j-1}).
\]
In particular, one has short exact sequences
\[
0 \to \widetilde H^i_f(F,\scrT_j)/T_j
\to
\widetilde H^i_f(F,\scrT_{j-1})
\to
\widetilde H^{i+1}_f(F,\scrT_j)[T_j] \to 0,
\]
where $(-)[T_j]$ denotes the subset of elements killed by $T_j$.
If $j > 0$ then the localization $A_{j,(T_j)}$ is a discrete valuation
ring with uniformizer $T_j$, fraction field $\Frac(A_j)$, and residue
field $\Frac(A_{j-1})$.  We thus obtain short exact sequences
\begin{equation}
\label{E:grothendieck duality}
0 \to \widetilde H^i_f(F,\scrT_j)_{(T_j)}/T_j
\to
\widetilde H^i_f(F,\scrT_{j-1}) \otimes_{A_{j-1}} \Frac(A_{j-1})
\to
\widetilde H^{i+1}_f(F,\scrT_j)_{(T_j)}[T_j] \to 0.
\end{equation}

One may now apply the arguments of \cite[Proposition~5.2.2 parts (4)
  through (7)]{N3}, using the above short exact sequence in place of
part (3) there, over each of the discrete valuation rings
$A_{j,(T_j)}$, $j=1,\ldots,r$, to show the following in turn:
\begin{itemize}
\item[(a)] Applying \cite[Theorem~10.2.3 and Proposition 10.2.5]{N-selmer} to \eqref{E:grothendieck duality} for $i=1$, there exists a nondegenerate skew-symmmetric pairing
\[
\widetilde H^2_f(F,\scrT_j)_{(T_j),A_{j,(T_j)}\text{-tors}}
\times
\widetilde H^2_f(F,\scrT_j)_{(T_j),A_{j,(T_j)}\text{-tors}}
\to (\Frac A_j)/A_{j,(T_j)}.
\]
\item[(b)] By (a) and the structure of symplectic modules over a DVR, there exists a finite-length $A_{j,(T_j)}$-module $Z_j$ such
  that $\widetilde H^2_f(F,\scrT_j)_{(T_j),A_{j,(T_j)}\text{-tors}}
  \cong Z_j \oplus Z_j$.
\item[(c)] By looking at \eqref{E:grothendieck duality} for $i=0$, the $A_{j,(T_j)}$-module $\widetilde
  H^1_f(F,\scrT_j)_{(T_j)}$ is torsion-free and hence free of rank $\widetilde
  h^1_f(F,\scrT_j)$.
\item[(d)] By (b) and \eqref{E:grothendieck duality} for $i=1$, one has $\widetilde h^1_f(F,\scrT_j) \equiv \widetilde
  h^1_f(F,\scrT_{j-1}) \pmod2$.
\end{itemize}
One then assembles the congruences
\[
\widetilde h^1_f(F,V)
=
\widetilde h^1_f(F,\scrT_0)
\equiv
\widetilde h^1_f(F,\scrT_1)
\equiv
\cdots
\equiv
\widetilde h^1_f(F,\scrT_r)
=
\widetilde h^1_f(F,\scrT) \pmod2
\]
to complete the proof of the proposition.
\end{proof}

\subsection{Global analytic invariants}

In this subsection we treat the variation of global $\vep$-factors in
families.  We continue with the notations and hypotheses of
Subsection~\ref{SS:global data}.

Let $(L,V,j,\{S_v\}_{v \in S_p})$ be a global point datum.  To every
finite place $v$ one can attach a local $\vep$-factor $\vep_v(V) =
\vep(WD(V_v)) \in \{\pm1\}$ (when $v \in S_p$, recall that $V_v$ is
Panchishkin and hence de~Rham).  All $v$ where $V$ is unramified, and
in particular all $v \notin S$, satisfy $\vep_v(V) = 1$.  We do not
know how to compute the individual factors $\vep_v(V)$ for archimedean
places $v \in S_\infty$, but we can compute their product, namely
\begin{equation}
\label{E:epsilon at infinity}
\vep_\infty(V) = (-1)^{r_2(F)\dim_L V/2} (-1)^{d^-(V)},
\end{equation}
where $r_2(F)$ is the number of pairs of complex embeddings of $F$
and, as in Subsection~\ref{S:local1},
\[
d^-(V_v): = \sum_{i<0} i \dim_L \Gr^i \bbD_\dR(V_v)
\textrm{ for } v \in S_p
\qquad \text{and} \qquad
d^-(V) := \sum_{v \in S_p} d^-(V_v).
\]

\begin{remark}\label{R:gap}
Let $M$ be a motive over $F$ with coefficients in a number field $E$,
symplectic self-dual and pure (of weight $-1$), let $\gothp$ be a
prime of $E$ lying over $p$, and suppose $L = E_\gothp$ and $V$ is the
$\gothp$-adic \'etale realization of $M$.  By classifying the possible
Hodge structures attached to $M$ and applying
\cite[\S\S5.2--5.3]{Del}, one can compute that for each embedding
$\iota \cn E \hookrightarrow \CC$, the product of archimedean
$\vep$-factors of $M$ (with respect to $\iota$) is given by
\[
\vep_\infty(\iota M)
=
\prod_{u \in S_\infty} \vep_u(\iota M)
=
(-1)^{r_2(F)\rank M/2}
\cdot
(-1)^{\sum_{u \in S_\infty,i<0} [F_u:\RR] i d^i_u(\iota M)},
\]
where $d^i_u(\iota M) = \dim_\CC \Gr^i M_\dR \otimes_{E \otimes_\QQ F,
  \iota \otimes \tilde u} \CC$ for any embedding $\tilde u \cn F
\hookrightarrow \CC$ giving rise to $u$.  We claim that
$\vep_\infty(\iota M) = \vep_\infty(V)$.  In fact, by inspection one
has
\[
\vep_\infty(\iota M) = (-1)^{r_2(F)\rank M/2} \vep_\infty(\iota \Ind^F_\QQ M)
\quad\text{and}\quad
\vep_\infty(V) = (-1)^{r_2(F)\dim_L V/2} \vep_\infty(\Ind^F_\QQ V),
\]
so it suffices to treat the case where $F=\QQ$.  Then $S_\infty =
\{\infty\}$ and $S_p = \{p\}$, and one has
\begin{gather*}
d^i_\infty(\iota M)
=
\dim_\CC \Gr^i M_\dR \otimes_{E,\iota} \CC
=
\dim_E \Gr^i M_\dR
=
\dim_L \Gr^i M_\dR \otimes_E L, \\
\Gr^i M_\dR \otimes_E L
\cong
\Gr^i \bbD_\dR(V_p),
\end{gather*}
which imply the desired claim.  (Another argument to show that
$\vep_\infty(\iota M) = \vep_\infty(V)$ is given in \cite{N3e}, and
this argument gives more information than ours, but we warn that some
of its steps are invalid when $F$ is not totally real.)

\end{remark}

We define the global $\vep$-factor as the product of the local
$\vep$-factors:
\[
\vep(V)
=
\vep_\infty(V) \cdot \prod_{v \notin S_\infty} \vep_v(V)
\in
\{\pm1\}.
\]
Applying Proposition~\ref{P:panchishkin epsilon} for each $v \in S_p$
gives 
\[
\vep(V)
=
(-1)^{\sum_{v \in S_p} h^0(D_v^-)}
(-1)^{r_2(F)\dim_LV/2}
\prod_{v \in S_p} (\det(D_v^+))(-1)
\cdot
\prod_{v \notin S_p \cup S_\infty} \vep_v(V).
\]
We define the modified $\vep$-factor to be
\begin{align}\label{E:modified epsilon}
\tilde\vep(V)
&=
(-1)^{\sum_{v \in S_p} h^0(D_v^-)} \vep(V) \\
&=
(-1)^{r_2(F)\dim_LV/2}
\prod_{v \in S_p} (\det(D_v^+))(-1)
\cdot
\prod_{v \notin S_p \cup S_\infty} \vep_v(V). \nonumber
\end{align}

Let $\scrX = (X,\scrT,j,\{\scrS_v\}_{v \in S_p})$ be a global datum.
For each place $v \notin S_p \cup S_\infty$ we form $\vep_v(\scrT) =
\vep(WD(\scrT_v) \otimes_{\calO_X} \kappa(X))$, a rational function on
$X$ with values in $\{\pm1\}$.  Then we set
\[
\tilde\vep(\scrT)
=
(-1)^{r_2(F)\rank_{\calO_X}\scrT/2}
\prod_{v \in S_p} (\det(\scrD_v^+))(-1)
\cdot
\prod_{v \notin S_p \cup S_\infty} \vep_v(\scrT),
\]
again a rational function on $X$ with values in $\{\pm1\}$.

\begin{prop}\label{P:vep_an}
If $\scrX$ is a global datum with $X$ irreducible and $P \in X_\alg$,
then $\tilde\vep(V) = \tilde\vep(\scrT)$.
\end{prop}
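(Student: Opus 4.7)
The plan is to expand both sides of the claimed equality using the second form of formula~\eqref{E:modified epsilon}, then compare them factor by factor. Both $\tilde\vep(V)$ and $\tilde\vep(\scrT)$ decompose as a product of three types of contributions: an archimedean sign $(-1)^{r_2(F)d/2}$, local determinant evaluations $\det(?_v^+)(-1)$ for $v \in S_p$, and local $\vep$-factors $\vep_v(?)$ for finite $v \notin S_p$. Since $\rank_{\calO_X}\scrT = \dim_L V = d$, the archimedean sign matches literally. It therefore remains to compare the $S_p$ factors and the finite non-$S_p$ factors.

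For each $v \in S_p$, the rank one $(\vphi,\Ga)$-module $\det(\scrD_v^+) = \bigwedge^{d/2}\scrD_v^+$ over $\calR_{\calO_X}(\pi_{F_v})$ specializes at $P$ to $\det(D_v^+)$, since the formation of exterior powers commutes with base change.  Under the classification of rank one $(\vphi,\Ga)$-modules by continuous characters of $F_v^\times$ in families (an arithmetic-family version of \cite[Construction~6.2.4]{KPX}), $\det(\scrD_v^+)$ corresponds to a continuous character of $F_v^\times$ whose specialization at $P$ is the character associated to $\det(D_v^+)$. Evaluating at the order-two element $-1 \in \calO_{F_v}^\times$ yields values in $\{\pm1\}$ (both in $\kappa(X)^\times$ and in $\kappa(P)^\times$) and gives the equality $\det(\scrD_v^+)(-1)|_P = \det(D_v^+)(-1)$.

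For each finite place $v \notin S_p$, the restriction $\scrX_v = (X,\scrT_v,j)$ is a local datum: the same unit $u$ guaranteeing $P \in X_\pair$ for the global datum $\scrX$ restricts to guarantee $P \in X_\pair$ for $\scrX_v$, and the purity clause in the definition of global $X_\alg$ ensures $P$ lies in the local $X_\alg$ for $\scrX_v$ (trivially when $\scrT_v$ is unramified, otherwise directly). Proposition~\ref{P:local eps locally constant} therefore yields $\vep_v(\scrT)|_P = \vep_v(V)$ for every such $v$; note that for unramified $v$ both sides equal $1$, so only finitely many places contribute nontrivially to the product. Assembling the three contributions gives the desired identity.

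Essentially all of the real work is carried by Proposition~\ref{P:local eps locally constant}, whose proof required a careful family-wise analysis of the monodromy filtration on $WD(\scrT_v)$, executed by passing to a resolution, splitting the generalized $f$-eigenspaces $\scrT_i$ after a finite base change, and checking flatness of $\img(N^r|\scrT)$; given that result, the present proposition is a direct bookkeeping exercise with no further obstacles.
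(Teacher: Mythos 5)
Your proof is correct and follows essentially the same route as the paper: the paper's own argument is precisely a factor-by-factor comparison of the second expression in \eqref{E:modified epsilon}, noting the archimedean and $S_p$ factors are unchanged under specialization and invoking Proposition~\ref{P:local eps locally constant} for the places $v \notin S_p \cup S_\infty$. Your added detail on specializing $\det(\scrD_v^+)(-1)$ via the rank one classification simply fleshes out what the paper dismisses as obvious.
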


\begin{proof}
It suffices to compare $\tilde\vep(V)$ and $\tilde\vep(\scrT)$
factor-by-factor.  The first two factors are obviously unchanged by
specialization from $X$ to $P$, and for the final factor one invokes
Proposition~\ref{P:local eps locally constant}.
\end{proof}

\subsection{Main result}

We continue with the notations and hypotheses of
Subsection~\ref{SS:global data}, as well as the two intervening
subsections.

Suppose $(L,V,j,\{S_v\}_{v \in S_p})$ is a global point datum.  Then
the by \emph{parity conjecture} we mean the claim that the sign
$\vep(V)/(-1)^{h^1_f(F,V)} \in \{\pm1\}$ is equal to $1$.  When $V$ is
semisimple, the Fontaine--Mazur conjecture predicts that $V$ is the
$p$-adic \'etale realization of a motive, and when $V$ is motivic in
this sense the Bloch--Kato conjecture would imply the claim of the
parity conjecture.  Although we make no predictions on the validity of
the claim if $V$ is not motivic, the results of this paper apply to a
general global point datum regardless of whether $V$ is motivic.

Our main result is the following.  It generalizes
\cite[Theorem~5.3.1]{N3}, as per Example~\ref{Ex:nekovar}.

\begin{theorem}\label{T:main}
Let $\scrX$ be a global datum with $X$ irreducible.  Then one has
\[
\vep(V)/(-1)^{h^1_f(F,V)}
=
\tilde\vep(\scrT)/(-1)^{\tilde h^1_f(F,\scrT)},
\]
and in particular the sign $\vep(V)/(-1)^{h^1_f(F,V)}$ is independent
of $P \in X_\alg$.
\end{theorem}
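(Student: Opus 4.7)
The plan is to deduce the theorem by assembling the three preceding propositions, since the modifications that turn $\vep$ into $\tilde\vep$ and $h^1_f$ into $\tilde h^1_f$ are designed to cancel against each other.

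First I would observe that the definition \eqref{E:modified epsilon} and Proposition~\ref{P:selmer complex}(2) both insert the exact same shift by $\sum_{v \in S_p} h^0(D_v^-)$: namely,
\[
\tilde\vep(V) = (-1)^{\sum_{v \in S_p} h^0(D_v^-)} \vep(V),
\qquad
\tilde h^1_f(F,V) = h^1_f(F,V) + \sum_{v \in S_p} h^0(D_v^-).
\]
Dividing by $(-1)$ to the relevant exponent, these two shifts cancel, yielding the identity
\[
\vep(V)/(-1)^{h^1_f(F,V)} = \tilde\vep(V)/(-1)^{\tilde h^1_f(F,V)}.
\]
This reduces the proof of the first displayed formula to comparing the tilded invariants at $V$ and at $\scrT$.

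Next I would invoke Proposition~\ref{P:vep_an} to get $\tilde\vep(V) = \tilde\vep(\scrT)$, and Proposition~\ref{P:vep_alg} to get $\tilde h^1_f(F,V) \equiv \tilde h^1_f(F,\scrT) \pmod 2$, so that $(-1)^{\tilde h^1_f(F,V)} = (-1)^{\tilde h^1_f(F,\scrT)}$. Concatenating these two equalities with the cancellation step above gives the desired
\[
\vep(V)/(-1)^{h^1_f(F,V)} = \tilde\vep(\scrT)/(-1)^{\tilde h^1_f(F,\scrT)}.
\]

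Finally, for the independence assertion, I would note that the right-hand side is built entirely from $\scrX$ and makes no reference to the point $P$. Since $X$ is irreducible, Proposition~\ref{P:rational functions}(6) guarantees that $\kappa(X)$ is a field, so $\tilde\vep(\scrT)$, being an element of $\kappa(X)^\times$ with values in $\{\pm1\}$, is a well-defined constant sign; similarly $\tilde h^1_f(F,\scrT)$ is a well-defined generic rank. Hence the ratio is a single element of $\{\pm1\}$ not depending on $P \in X_\alg$. There is no real obstacle here: all the substance has been absorbed into Propositions~\ref{P:selmer complex}, \ref{P:vep_alg}, and~\ref{P:vep_an}; the only mild subtlety is to track that the parity-breaking shifts $(-1)^{h^0(D_v^-)}$ introduced on the Selmer side agree precisely with those introduced on the $\vep$-factor side via Proposition~\ref{P:panchishkin epsilon}, which is already built into the definitions of the tilded objects.
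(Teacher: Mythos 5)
Your proposal is correct and follows exactly the paper's own argument: the cancellation of the $\sum_{v \in S_p} h^0(D_v^-)$ shift via \eqref{E:modified epsilon} and Proposition~\ref{P:selmer complex}(2) gives the first equality, and Propositions~\ref{P:vep_an} and~\ref{P:vep_alg} give the second, with the independence of $P$ immediate since the right-hand side is built from $\scrX$ alone. No gaps; this matches the paper's proof.
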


\begin{proof}
One assembles the equalities
\[
\vep(V)/(-1)^{h^1_f(F,V)}
=
\tilde\vep(V)/(-1)^{\tilde h^1_f(F,V)}
=
\tilde\vep(\scrT)/(-1)^{\tilde h^1_f(F,\scrT)},
\]
the first by \eqref{E:modified epsilon} and Proposition~\ref{P:selmer
  complex}(2), and the second by Proposition~\ref{P:vep_an} and
Proposition~\ref{P:vep_alg}.
\end{proof}

\numberwithin{theorem}{section}
\section{Hilbert modular forms}\label{S:Hilbert}

The previous section is concerned with relating the parity conjecture
of two different objects, when they are members of the same family.
By contrast, the present section gives a supply of objects where the
parity conjecture is actually known.

In numerous works, Nekov\'a\v{r} has presented techniques allowing one
to prove the parity conjecture for a wide class of Hilbert modular
forms, subject most significantly to the condition that they be
ordinary (up to twist) at places above $p$; see for example
\cite{NGr}, where many of the techniques are employed.  We gather
these techniques into one place, and use the main result of the
preceding section to weaken the condition to being finite-slope (up to
twist) at places above $p$.

Fix a prime $p \neq 2$.

We fix $F$ a totally real field and an algebraic closure $F^\alg/F$.
In contrast to the previous sections, here the letter $K$ will always
denote a finite extension of $F$ inside $F^\alg$.  When $S$ is a
finite set of primes of $F$, we let $K_S$ be the maximal extension of
$K$ inside $F^\alg$ that is unramified outside places lying over those
in $S \cup \{\infty\}$, and we set $G_{K,S} = \Gal(K_S/K)$.  For
brevity, if $I \subseteq \calO_F$ is a nonzero ideal, we write
$G_{K,I}$ for $G_{K,S}$ with $S$ taken to be the set of primes
dividing $I$.

We consider cuspidal Hilbert newforms $f$ over $F$ of parallel weight
$2$, trivial central character $\om_f = 1$, level $N = N_f$, and
coefficients embedded in a finite extension of $\Qp$.  If $f$ has CM,
we let its field of CM be $K'/F$.  There is associated to $f$ a
two-dimensional continuous $G_{F,N}$-representation $V = V_f$.  We
assume $f$ is \emph{finite-slope (up to twist) at $p$}, which means
that for every prime $v$ of $F$ lying over $p$, the local constituent
at $v$ of its associated automorphic representation is not
supercuspidal; this is equivalent to $\bbD_\rig(V_f|_{G_{F_v}})$ being
Panchishkin.

We consider quadratic CM extensions $K/F$ of discriminant $D = D_K$
and quadratic character $\eta = \eta_K$ over $F$, the latter
considered interchangeably as either a homomorphism $G_{F,D} \to \{\pm1\}$ or
as a homomorphism $\bfA_F^\times/F^\times \to \{\pm1\}$.  Given also $f$, we
write $f_\eta$ for the newform associated to the twist $f \otimes
\eta$, so that $V_{f_\eta} = V_f \otimes \eta$.

Given $K$ and a prime $P$ of $F$ lying over $p$, we let $\wt G_n$
denote the ring class group of conductor $P^n$, defined by
$\bfA_K^\times/(K^\times K_\infty^\times \wh\calO_{P^n}^\times)$ where
$K_\infty = K \otimes_\QQ \RR$ and $\calO_{P^n} = \calO_F+P^n\calO_K$.
Let $\wt G_\infty = \varprojlim_n \wt G_n$, and let $\Delta$ be its
(finite) torsion subgroup.  Let $G_\infty = \wt G_\infty/\Delta$, so
that $G_\infty$ is noncanonically isomorphic to $\Zp^{[F_P:\Qp]}$, and
let $G_n$ denote the quotient of $\wt G_n$ by the image of $\Delta$.
When $K$ and $P$ are understood, we consider finite-order continuous
characters $\chi$ of $G_\infty$.  The \emph{conductor} of $\chi$ is by
definition $P^n$, for the smallest integer $n \geq 0$ such that $\chi$
factors through $G_n$.  Note that the image of $\bfA_F^\times$ in $\wt
G_\infty$ is contained in $\Delta$, so that $\chi|_{\bfA_F^\times} =
1$.  Note also that since $p$ is odd, another quadratic extension
$K'/F$ has $K' \neq K$ if and only if $K' \nsubseteq (F^\alg)^{\ker
  \chi}$.

With $f$, $K$, and $\chi$ as above, there are the Bloch--Kato Selmer
groups
\[
H^1_f(F,V),\ H^1_f(F,V_{f_\eta}),\ H^1_f(K,V_f\otimes\chi),\ 
H^1_f(K,V_f) \cong H^1_f(F,V_f) \oplus H^1_f(F,V_{f_\eta}),
\]
of respective dimensions
\[
h^1_f(f),\ h^1_f(f_\eta),\ h^1_f(f,\chi),\
h^1_f(f/K) = h^1_f(f) + h^1_f(f_\eta).
\]
On the other hand, we write
\[
\vep(f),\ \vep(f_\eta),\ \vep(f,\chi),\
\vep(f/K) = \vep(f)\vep(f_\eta),
\]
respectively, for the sign of the functional equation of
\[
L(f,s),\ L(f_\eta,s),\ L(f,\chi,s),\
L(f,1,s) = L(f,s)L(f_\eta,s),
\]
where the latter two are Rankin--Selberg $L$-functions.  There are
also local signs $\vep_v(?)$, and the product formulas $\vep(?)  =
\prod_v \vep_v(?)$.  For each $? \in \{f, f_\eta, (f,\chi), f/K\}$ the
\emph{parity conjecture} for $?$ is the claim that $(-1)^{h^1_f(?)} =
\vep(?)$.  If the parity conjecture holds for $f/K$, then the parity
conjecture for $f$ is equivalent to the parity conjecture for
$f_\eta$.  These definitions all agree with the setup in the previous
section; in particular, the product of these epsilon factors at
infinity is given by \eqref{E:epsilon at infinity}.

The following result is our main contribution to the circle of ideas
of this section.

\begin{theorem}\label{T:parity-twisting}
Given any $f,K,P$ as above, the parity conjecture for $f/K$ is
equivalent to the parity conjecture for $(f,\chi)$ for any $\chi$ as
above.
\end{theorem}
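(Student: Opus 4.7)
The plan is to interpolate the Galois representations attached to $(f,\chi)$ in a $p$-adic analytic family of anticyclotomic ring-class characters that specializes at $\chi = 1$ to the datum associated to $f/K$, and then invoke Theorem~\ref{T:main} to transport the parity conjecture between fibers.

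First, I would take $X$ to be the rigid analytic space over $\Qp$ (with coefficients enlarged to contain those of $f$) whose $\ov\Qp$-points parametrize the continuous characters $G_\infty \to \ov\Qp^\times$; since $G_\infty \cong \Zp^{[F_P:\Qp]}$, the space $X$ is an open polydisk, and the finite-order characters (in particular $\chi = 1$) are Zariski-dense. Letting $\chi^\univ \cn G_\infty \to \calO_X^\times$ be the tautological character, I would form
\[
\scrT \;=\; \Ind_K^F(V_f \otimes \chi^\univ),
\]
a rank-$4$ locally free $\calO_X$-module with continuous $G_{F,S}$-action, where $S$ contains $S_\infty \cup S_p$ and the primes dividing $N D$. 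Because $\chi \cdot \chi^c = 1$ on $G_\infty$, because $V_f^*(1) \cong V_f$ (using $\om_f = 1$), and because $V_f|_{G_K}$ is invariant under conjugation by $\Gal(K/F)$, one obtains a skew-symmetric isomorphism $j \cn \scrT \stackrel\sim\to \scrT^*(1)$.

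Second, I would equip $\scrT$ with a family of Panchishkin exact sequences $\scrS_v$ at each $v \in S_p$, using the hypothesis that $f$ is finite-slope at $p$ to obtain Panchishkin sequences $0 \to \bbD_\rig(V_f|_{G_{F_v}})^+ \to \bbD_\rig(V_f|_{G_{F_v}}) \to \bbD_\rig(V_f|_{G_{F_v}})^- \to 0$. For $v \in S_p$ with $v \neq P$, the character $\chi^\univ$ is unramified at $v$, so this filtration induces one on $\bbD_\rig(\scrT_v)$ after twisting and inducing. At $v = P$, Mackey's formula gives two cases: if $P$ splits in $K$ then $\scrT_P$ decomposes as $(V_f \otimes \chi^\univ)|_{G_{F_P}} \oplus (V_f \otimes (\chi^\univ)^c)|_{G_{F_P}}$ and one takes the direct sum of the two Panchishkin sub-objects; if $P$ is inert or ramified then $\scrT_P = \Ind_{K_P}^{F_P}(V_f|_{G_{K_P}} \otimes \chi^\univ)$, and one inducts the Panchishkin sub of the twisted restriction. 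In both cases one checks that the resulting $\scrD_P^+$ is of rank $2 = d/2$ and is Lagrangian for $j$, so that $(X,\scrT,j,\{\scrS_v\})$ is a global datum.

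Third, I would verify that both $\chi = 1$ and every finite-order $\chi$ lie in $X_\alg$: twisting by a character of Hodge--Tate weight $0$ preserves the pattern of Hodge--Tate weights, so the Panchishkin condition at each $v \in S_p$ is inherited from $V_f|_{G_{F_v}}$; and purity at ramified finite places follows from local-global compatibility applied to the cohomological Hilbert cusp form underlying $V_f \otimes \chi$. The Shapiro identifications
\[
H^1_f(F, \Ind_K^F V_f) \cong H^1_f(F, V_f) \oplus H^1_f(F, V_{f_\eta}), \qquad H^1_f(F, \Ind_K^F(V_f \otimes \chi)) \cong H^1_f(K, V_f \otimes \chi),
\]
together with the factorizations $\vep(\Ind_K^F V_f) = \vep(f)\vep(f_\eta) = \vep(f/K)$ and $\vep(\Ind_K^F(V_f \otimes \chi)) = \vep(f,\chi)$, translate the conclusion of Theorem~\ref{T:main}---namely, the constancy of $\vep(V)/(-1)^{h^1_f(F,V)}$ on $X_\alg$---into precisely the claimed equivalence of the two parity conjectures.

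The main obstacle is the uniform construction of $\scrS_P$ over the entire family at the distinguished prime $P$ where $\chi^\univ$ genuinely varies: one must produce a saturated $(\vphi,\Ga)$-submodule of $\bbD_\rig(\scrT_P)$ of constant rank $2$ whose fibers at all finite-order $\chi$ are Panchishkin and Lagrangian with respect to $j$. The family-theoretic framework of \cite{KPX} supplies the technology for assembling $\bbD_\rig$ in families, but handling the induction and symplectic isotropy conditions simultaneously---particularly in the inert/ramified case, where $\scrS_P$ is not a direct sum decomposition---requires some care.
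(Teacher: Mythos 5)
Your proposal is correct and is essentially the paper's own argument: the paper's (much terser) proof likewise applies Theorem~\ref{T:main} to the family of twists of $V_f|_{G_K}$ by characters of $G_\infty$ (packaged, as you make explicit, via $\Ind_K^F$ so as to get a symplectic global datum over $F$), with the only point singled out for verification being purity, which it reduces to purity of $V_f$ itself via the Ramanujan conjecture and local-global compatibility (Blasius, Skinner). Your extra details on the Panchishkin sequences at $v\in S_p$ and the Lagrangian condition at $P$ are consistent with what the paper leaves implicit; only your purity justification should be phrased, as in the paper, by reducing to $V_f$ under finite-order twist, restriction and induction rather than appealing to an automorphic object "underlying $V_f\otimes\chi$."
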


\begin{proof}
This follows from the finite-slope hypothesis and
Theorem~\ref{T:main}, using the family of twists of $V|_{G_{K,ND}}$ by
continuous characters of $G_\infty$.  We only need to verify that
$V|_{G_{K,ND}}$ and $V|_{G_{K,ND}} \otimes \chi$ are pure.  Since
$\chi$ has finite order, it suffices to check for $V|_{G_{K,ND}}$, and
for the latter it suffices to know purity for $V$.  This fact follows
from combining the Ramanujan conjecture and local-global
compatibility for Hilbert modular forms, which were shown in
increasing generality by many people, the final cases being treated in
\cite[Theorem~1]{Bl} and \cite[Theorem~1]{S}, respectively.
\end{proof}

To treat the parity conjecture for $f$, we will momentarily divide
into two cases.  The precise inputs needed depend on which case we are
in, but the proof generally proceeds in both cases by the following
two steps, which first appeared in \cite{N2}.  \emph{Step 1} is to
verify the parity conjecture for $f/K$ for some class of $K$.  First,
one invokes nonvanishing results for Rankin--Selberg $L$-values or CM
points on Jacobians of Shimura curves, which involve a possibly
high-conductor twist $\chi$.  Second, one uses Euler system machinery
to convert these nonvanishing results into bounds on Selmer groups, to
obtain the parity conjecture for $(f,\chi)$.  Finally, one uses the
above theorem to get rid of $\chi$.  \emph{Step 2} is to use
nonvanishing theorems for quadratic-twisted $L$-values to choose a $K$
as in Step 1 where the Euler system method is powerful enough to give
the parity conjecture for $f_\eta$ itself.

We say that $f$ \emph{admits an indefinite case} if it satisfies the
equivalent conditions of \cite[Proposition~2.10.2]{NCan}, one of which
being that $[F:\QQ]$ is odd or $f$ is not principal series at some
finite place.  Note that this condition is invariant under replacing
$f$ by a quadratic twist.

The reader may compare the following theorem to
\cite[Theorem~1.4]{NANT}.

\begin{theorem}\label{T:indefinite}
Recall that we have assumed $p$ is odd, and $f$ has trivial central
character and is finite-slope (up to twist) at $p$.  If $f$ admits an
indefinite case, then the parity conjecture holds for $f$.
\end{theorem}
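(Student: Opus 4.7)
The plan is to execute Nekov\'a\v{r}'s two-step template from \cite{N2,NCan}, adapted to the finite-slope setting by Theorem \ref{T:parity-twisting}. In Step 1, I would prove the parity conjecture for $f/K$ for a suitable class of totally imaginary quadratic extensions $K/F$; in Step 2, I would select $K$ so that the parity conjecture also holds for $f_\eta$ (with $\eta = \eta_K$), whence the decompositions $h^1_f(f/K) = h^1_f(f) + h^1_f(f_\eta)$ and $\vep(f/K) = \vep(f)\vep(f_\eta)$ noted earlier yield parity for $f$.

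For Step 1, the indefinite case hypothesis provides, via Jacquet--Langlands, a realization of $f$ on an indefinite quaternion algebra $B/F$, and thereby Heegner points with CM by $K$ on the associated Shimura curve $X_B$ whenever $K/F$ satisfies the standard Heegner splitting condition at the finite ramification locus of $B$. For an anticyclotomic character $\chi$ of $G_\infty$ (for some $P \mid p$) of sufficiently large conductor, the nonvanishing results of Cornut--Vatsal and Zhang produce a nonzero CM class in $H^1_f(K, V_f \otimes \chi)$; the Kolyvagin--Nekov\'a\v{r} Euler system argument then bounds this Selmer group above, yielding the parity conjecture for $(f,\chi)$. Theorem \ref{T:parity-twisting} immediately transfers this to the parity conjecture for $f/K$, which was the goal of Step 1.

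For Step 2, I would invoke Friedberg--Hoffstein-style nonvanishing theorems for quadratic twists of Hilbert modular $L$-functions to select $K$ simultaneously satisfying the Heegner condition of Step 1 and such that $L(f_\eta,1)\neq 0$. The functional equation then forces $\vep(f_\eta) = +1$, while analytic-rank-zero bounds on the Bloch--Kato Selmer group (available in the finite-slope case by combining classical results for ordinary forms with Theorem \ref{T:main} applied to a suitable $p$-adic family specializing to an ordinary point) give $h^1_f(f_\eta) = 0$; together these establish parity for $f_\eta$. The main obstacle is that the classical ingredients feeding Step 1 (CM-point nonvanishing and the Kolyvagin--Nekov\'a\v{r} Euler system) and Step 2 (analytic-rank-zero Bloch--Kato) were originally developed under an ordinarity assumption at $p$; Theorem \ref{T:parity-twisting} and Theorem \ref{T:main} are precisely the tools that allow the resulting parity conclusions to be propagated through $p$-adic families from an ordinary specialization to the finite-slope form $f$ of interest, and the technical delicacy is to verify in each case that the Panchishkin condition and the purity hypothesis at places in $S \setminus S_\infty$ are compatibly arranged along these families.
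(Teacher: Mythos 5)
Your Step 1 is essentially the paper's: indefinite Jacquet--Langlands transfer, nontorsion CM points of high conductor (the paper cites \cite[Theorem~2.5.1]{AN} and \cite[Theorem~3.2]{NDur} rather than Cornut--Vatsal/Zhang directly), the Euler-system bound giving parity for $(f,\chi)$, and Theorem~\ref{T:parity-twisting} to remove $\chi$ --- though you should record the hypotheses $\vep(f/K)=-1$ and $K\neq K'$ in the CM case, since the CM points are nontorsion only in the sign $-1$ situation.

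Step 2, however, has a genuine gap. You propose to choose a single $K$ satisfying both the Heegner condition needed for Step 1 (which forces $\vep(f/K)=-1$) and $L(f_\eta,1)\neq 0$ (which forces $\vep(f_\eta)=+1$). Since $\vep(f/K)=\vep(f)\vep(f_\eta)$, such a $K$ exists only when $\vep(f)=-1$; when $\vep(f)=+1$ no choice of $K$ can satisfy both conditions, so your argument does not cover half the cases. The paper avoids this by a double-twist argument: \cite[Theorem~B]{FH} provides $K_1,K_2$ with $\vep(f/K_1)=\vep(f_{\eta_{K_1}}/K_2)=-1$ and with the set of analytic orders of $L(f_{\eta_{K_1}},s)$ and $L((f_{\eta_{K_1}})_{\eta_{K_2}},s)$ at the center equal to $\{0,1\}$; Step 1 applied to $K_1$ and $K_2$ makes the parity conjectures for $f$, $f_{\eta_{K_1}}$, $(f_{\eta_{K_1}})_{\eta_{K_2}}$ all equivalent, and the form of analytic rank zero is handled by \cite[Corollary~2.10.4]{NCan}. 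Relatedly, your justification of ``$L$-value nonvanishing implies $h^1_f=0$'' is not valid: Theorem~\ref{T:main} only propagates the \emph{parity} statement $\vep/(-1)^{h^1_f}$ along a family with the requisite Panchishkin and purity data; it does not transfer rank-zero Selmer bounds, nor does nonvanishing of $L(f_\eta,1)$ say anything about $L$-values at an ordinary specialization of a family through $f_\eta$ (whose existence is itself not guaranteed). No such detour is needed: the rank-zero input \cite[Corollary~2.10.4]{NCan} is an Euler-system result for weight-two forms admitting an indefinite case and requires no ordinarity at $p$.
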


\begin{proof}
\emph{Step 1.}  We show the following claim: Assume that $K$ satisfies
$\vep(f/K)=-1$, and that if $f$ has CM then $K' \neq K$.  Then the
parity conjecture holds for $f/K$.

Fix an archimedean place $\tau$ of $F$.  Let $\Sigma$ be the set of
places $v$ of $F$ not equal to $\tau$ such that
$\eta_v(-1)\vep_v(f/K)=-1$.  All infinite places $v$ of $F$ not equal
to $\tau$ belong to $\Sigma$, because $\eta_v(-1)=-1$ and
$\vep_v(f/K)=1$.  At each finite $v \in \Sigma$, $K/F$ is not split
and $f$ is special or supercuspidal.  There exists a unique quaternion
algebra $B/F$ that has nonsplit set $\Sigma$.  The Jacquet--Langlands
correspondence associates $f$ to a cuspidal automorphic representation
$\pi'$ of $B^\times$ whose vectors are differentials on a system of
$B$-Shimura curves.  We fix an $F$-algebra embedding $t \cn K
\hookrightarrow B$, allowing us to form $K$-CM points on these Shimura
curves.

We apply \cite[Theorem~2.5.1]{AN}; let us specify how our situation
fits into the notations and hypotheses there.  We use the quaternion
algebra $B$ above.  In the data (2.4.1) of {\it loc. cit.}, we use (D1) $s=1$, $P_1 = P$,
(D2) $c=1$, (D3) the class $\calC$ is nontrivial but otherwise
arbitrary, (D4) the quotient $A_0$ is to be specified momentarily, and
(D5) $\chi_0=1$ (because our $\chi=1$).  In (2.4.3) of {\it loc. cit.} the set $S$
consists of those primes distinct from $P$ that ramify in $K/F$, and
$\chi_1=1$ because $\chi_0=1$.  It remains to choose $A_0$ among those
in the Shimura curve Jacobians corresponding to $\pi'$ so that the
equivalent conditions of Proposition 2.4.10(1) of {\it loc. cit.} are satisfied.  But
$c=1$ implies all primes in $S$ ramify in $K/F$, and in particular do
not split in $K/F$, so the discussion of (2.4.11) of {\it loc. cit.} produces a nonzero
${}^\sigma\ell$: if the vector $v$ satisfying ${}^\sigma\ell(v) \neq
0$ is fixed by the level $H$, then some quotient $A_0$ of the Shimura
curve Jacobian of level $H$ will do.  In summary, there exist $K$-CM
points on $A_0$ of arbitrarily high conductor that are nontorsion.

Then we apply \cite[Theorem~3.2]{NDur}, to $A_0$, $S_B = \Sigma$, a
nontorsion $K$-CM point guaranteed by the preceding paragraph, and any
character $\alpha$ whose component of the $K$-CM point is nontorsion
(note that $\alpha$ is necessarily a finite-order character of
$G_\infty$).  Therefore one has $(-1)^{h^1_f(f,\alpha)}=-1$, which,
combined with \cite[Proposition~2.6.2(2)]{AN} with $\alpha$ in place
of $\chi$, establishes the parity conjecture for $(f,\alpha)$.  We now
appeal to Theorem~\ref{T:parity-twisting} to obtain the claim.

\emph{Step 2.}  \cite[Theorem~B]{FH} gives infinitely many $K_1,K_2$
(distinct from $K'$ if $f$ has CM) with
\[
\vep(f/K_1) = \vep(f_{\eta_{K_1}}/K_2) = -1,
\quad
\{\ord_{s=1} L(f_{\eta_{K_1}},s),
  \ord_{s=1} L((f_{\eta_{K_1}})_{\eta_{K_2}},s)\}
=
\{0,1\}.
\]
Applying Step 1 twice shows that the parity conjectures for $f$,
$f_{\eta_{K_1}}$, and $(f_{\eta_{K_1}})_{\eta_{K_2}}$ are all
equivalent.  But letting $g \in \{f_{\eta_{K_1}},
(f_{\eta_{K_1}})_{\eta_{K_2}}\}$ be such that $\ord_{s=1} L(g,s)=0$,
it follows from \cite[Corollary~2.10.4]{NCan} that $h^1_f(g)=0$, so
that the parity conjecture holds for $g$.
\end{proof}

If $f$ does not admit an indefinite case, so that $\vep(f/K) = +1$ for
all $K$, more hypotheses are necessary.  The following condition (A2)
for $f$ arises in \cite{NCan}.
\begin{itemize}
\item[(A2)] there exists $g \in G_{F,N}$ such that $\det(1-gX|V) =
  (1-\lambda_1X)(1-\lambda_2X)$ with $\lambda_1^2 = 1 \neq
  \lambda_2^2$, and if $f$ has CM then moreover $\lambda_2^n \neq 1$
  for all $n \geq 1$.
\end{itemize}
Note that condition (A2) is invariant under replacing $f$ by a
quadratic twist.

The reader may compare the following theorem to
\cite[Theorem~3.5]{NANT} when $f$ has irreducible residual
representation.

\begin{theorem}\label{T:definite}
Recall we have assumed $p$ is odd, and $f$ has trivial central
character and is finite-slope (up to twist) at $p$.  If $f$ does not
admit an indefinite case, condition (A2) holds, and $\vep(f)=+1$, then
the parity conjecture holds for $f$.
\end{theorem}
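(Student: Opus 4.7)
The plan is to follow the two-step template of the proof of Theorem~\ref{T:indefinite}, replacing each indefinite-quaternionic input by its definite counterpart.

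In Step~1 I would establish the following claim: for every CM quadratic extension $K/F$ (distinct from $K'$ when $f$ has CM), the parity conjecture holds for $f/K$. Since $f$ admits no indefinite case, $\vep(f/K)=+1$ for every $K$, so the set $\Sigma = \{v \nmid \infty : \eta_v(-1)\vep_v(f/K) = -1\} \cup S_\infty$ has even cardinality and contains every archimedean place. The quaternion algebra $B/F$ ramified exactly at $\Sigma$ is therefore definite, and the Jacquet--Langlands transfer of $f$ to $B^\times$ lives on a zero-dimensional Shimura set. In place of non-torsion Heegner points on a Shimura-curve Jacobian, I would invoke the nonvanishing along the anticyclotomic $\Zp$-tower of the toric period integrals attached to $(f,\chi)$ after Vatsal and Cornut--Vatsal (in the packaging presented in \cite{NCan}), producing a finite-order character $\alpha$ of $G_\infty$ for which the toric period of $(f,\alpha)$ is nonzero. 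Via Waldspurger's formula this yields $L(f,\alpha,1)\neq 0$. Hypothesis (A2) then lets the Bertolini--Darmon--Howard-type Euler-system bound of \cite{NCan} conclude $h^1_f(f,\alpha)=0$; since $\vep(f,\alpha)=+1$, this is the parity conjecture for $(f,\alpha)$, which Theorem~\ref{T:parity-twisting} promotes to the parity conjecture for $f/K$.

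In Step~2 I would use Friedberg--Hoffstein \cite[Theorem~B]{FH} to produce a CM quadratic $K/F$ (distinct from $K'$ when $f$ has CM) with $L(f_{\eta_K},1)\neq 0$, so that \cite[Corollary~2.10.4]{NCan} gives $h^1_f(f_{\eta_K})=0$. The hypothesis $\vep(f)=+1$ together with the automatic identity $\vep(f/K)=+1$ forces $\vep(f_{\eta_K})=+1$, so the parity conjecture holds for $f_{\eta_K}$. Since Step~1 also gives it for $f/K$, the multiplicativity relations $h^1_f(f/K) = h^1_f(f) + h^1_f(f_{\eta_K})$ and $\vep(f/K) = \vep(f)\vep(f_{\eta_K})$ recorded in Section~\ref{S:Hilbert} then deliver the parity conjecture for $f$ itself.

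The main obstacle is Step~1. All automorphic-side nonvanishing inputs (Vatsal--Cornut, Waldspurger's formula) and the toric-period calculations on the definite quaternion side are insensitive to the $p$-adic slope of $f$, and so transfer wholesale from the ordinary to the finite-slope setting. The delicate point is the Galois-cohomological half: converting ``$L$-value nonzero'' into ``Selmer group zero'' via an Euler system requires condition (A2) to ensure enough residual image of Galois for Chebotarev-style patching. Since (A2) is a purely $G_{F,N}$-representation-theoretic statement, once it is assumed no new work is needed there. The essential novelty of the argument lies in the passage from $(f,\alpha)$ to $f/K$, furnished by Theorem~\ref{T:parity-twisting}, which is where the finite-slope hypothesis (via Theorem~\ref{T:main}) enters.
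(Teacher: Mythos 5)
Your two-step skeleton matches the paper's, but Step~2 contains a genuine gap. Having produced $K$ with $L(f_{\eta_K},1)\neq 0$, you invoke \cite[Corollary~2.10.4]{NCan} to conclude $h^1_f(f_{\eta_K})=0$. That corollary is the Shimura-curve (Gross--Zagier--Zhang/Kolyvagin-type) statement and is only available when the form admits an indefinite case; it is precisely the tool used in Step~2 of Theorem~\ref{T:indefinite}. Since admitting an indefinite case is invariant under quadratic twist, $f_{\eta_K}$, like $f$, does \emph{not} admit one, so Corollary~2.10.4 does not apply here. The paper instead deduces $h^1_f(f_{\eta_K})=0$ from the proof of \cite[Theorem~B]{NCan}, a level-raising/congruence argument that uses condition (A2) and forces a more careful choice of $K$: one fixes auxiliary primes $Q_1,Q_2\nmid NP$ and uses \cite[Theorem~B]{FH} to produce $K$ in which $Q_1,Q_2$ ramify and every prime dividing $NP$ splits, in addition to $L(f_{\eta_K},1)\neq 0$. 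Your $K$ carries none of these local constraints, so the implication ``central value nonzero $\Rightarrow$ Selmer rank zero'' is unsupported at the decisive point. (Relatedly, you locate the role of (A2) only in Step~1, whereas in the definite case it is equally essential to this Step~2 input.)

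Step~1 also overreaches: you assert parity for $f/K$ for \emph{every} CM quadratic $K$, but the inputs you cite (Vatsal/Cornut--Vatsal nonvanishing via toric periods and Waldspurger, plus the Euler-system bound \cite[Theorem~A]{NCan}) are applied in the paper only under the hypotheses that $D$ is coprime to $N$ and that $P$ splits in $K/F$. The splitting at $P$ is what makes $\vep(f,\chi)=\vep(f/K)=+1$ for all ring-class characters $\chi$ of $P$-power conductor, which is exactly what both the quoted nonvanishing theorem \cite[Theorem~1.4]{CV} (in the form used) and your claim $\vep(f,\alpha)=+1$ rest on; for $P$ nonsplit neither is automatic. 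This part is repairable, since Step~2 only requires $K$'s with prescribed local behaviour, which Friedberg--Hoffstein supplies; once you restrict Step~1 accordingly and add the ramification at $Q_1,Q_2$ with the correct citation above, you recover essentially the paper's argument.
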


\begin{proof}
\emph{Step 1.}  We show the following claim: Assume that $D$ is
coprime to $N$, that $P$ is split in $K/F$, and that if $f$ has CM
then $K \neq K'$. Then the parity conjecture holds for $f/K$.

Because $P$ is split in $K/F$, for all $\chi$ the sign $\vep(f,\chi) =
\vep(f/K) = 1$ is constant, and by \cite[Theorem~1.4]{CV}, one can
find $\chi$ of sufficiently large $P$-power conductor such that
$L(f,\chi,1) \neq 0$.  Then \cite[Theorem~A]{NCan} shows that
$h^1_f(f,\chi) = 0$.  Thus the parity conjecture holds for $(f,\chi)$,
hence also for $f/K$ by Theorem~\ref{T:parity-twisting}.

\emph{Step 2.}  Fix two primes $Q_1,Q_2$ of $F$ not dividing $NP$.
\cite[Theorem B]{FH} gives infinitely many $K$ (distinct
from $K'$ if $f$ has CM) such that every prime dividing $NP$ is split
in $K/F$, the primes $Q_1,Q_2$ ramify in $K/F$, and $L(f_\eta,1) \neq
0$.  The proof of \cite[Theorem~B]{NCan} shows that $h^1_f(f_\eta) =
0$, so that the parity conjecture holds for $f_\eta$.  But Step 1
shows that the parity conjecture for $f_\eta$ is equivalent to the
parity conjecture for $f$.
\end{proof}

\end{document}